\documentclass[10pt]{amsart}
\usepackage{bm,graphicx,mathrsfs,amssymb}
\usepackage[foot]{amsaddr}
\usepackage{xy}\xyoption{all}
\usepackage{xcolor}
\usepackage[colorlinks=true]{hyperref}

\parskip=0.5ex plus0.16667ex minus0.08333ex

\newcommand{\mc}{\mathcal}
\newcommand{\mb}{\mathbf}
\newcommand{\bb}{\mathbb}

 % for dots in scripts
 % for align etc.
\DeclareMathOperator{\im}{{\rm im}}

\DeclareMathOperator{\rank}{{\rm rank}}
\DeclareMathOperator{\val}{{\rm val}}

\DeclareMathOperator{\supp}{{\rm supp}}

\newcommand{\conv}{{\rm conv}}

\DeclareMathOperator{\face}{{\rm face}}

\DeclareMathOperator{\lc}{{\rm lc}}
\newcommand{\Gr}{\mathrm{Gr}}
\newcommand{\SI}{\mathrm{SI}}
\newcommand{\bbK}{\bb K}
\newcommand{\bbk}{\mb k} % will do for now.
\newcommand{\R}{\bb R}
\newcommand{\Rinf}{\bb R_\infty}
\newcommand{\TP}{\bb{TP}}
\newcommand{\TT}{\bb{T}T}
\newcommand{\tc}{{\rm tc}}
\newcommand{\TC}{{\rm TC}}

% here are the macros for experimental or changeable notation
\newcommand{\cl}{\bm} 
\newcommand{\tmcm}{\mathring{\R}_{\infty}^{d\times n}} %tropical matrices whose support contains a matching

% participants in the bijections

% make marginpar suitable for editorial comments
\setlength{\marginparwidth}{1.2in}
\let\oldmarginpar\marginpar
\renewcommand\marginpar[1]{\-\oldmarginpar[\footnotesize #1]{\raggedright\footnotesize #1}}

\newtheorem{theorem}{Theorem}[section]
\newtheorem*{theorem*}{Theorem}
  \newtheorem*{theorem_rs}{Theorem \ref{r:restrictsubdiv}}
\newtheorem{lemma}[theorem]{Lemma}
\newtheorem{proposition}[theorem]{Proposition}
\newtheorem{corollary}[theorem]{Corollary}
  \newtheorem*{corollary_si}{Corollary \ref{cor:Stiefel image}}
  \newtheorem*{corollary_rt}{Corollary \ref{r:reg_transversal}}

\theoremstyle{definition}
\newtheorem{definition}[theorem]{Definition}

\theoremstyle{remark}
\newtheorem{remark}[theorem]{Remark}
\newtheorem{example*}[theorem]{Example}

\newenvironment{example}{\begin{example*}\pushQED{\qed}}{\popQED\end{example*}}

\numberwithin{equation}{section}

\begin{document}

\begin{abstract}
The tropical Stiefel map associates to a tropical matrix $A$ its 
tropical Pl\"ucker vector of maximal minors, and thus 
a tropical linear space $L(A)$.  We call the $L(A)$s obtained in this way
\emph{Stiefel tropical linear spaces}.
We prove that they are dual to certain 
matroid subdivisions of polytopes of transversal matroids, 
and we relate their combinatorics
to a canonically associated tropical hyperplane arrangement.
We also explore a broad connection with the secondary fan of the Newton
polytope of the product of all maximal minors of a matrix.
In addition, we investigate the natural parametrization of $L(A)$ arising
from the tropical linear map defined by $A$.
\end{abstract}

\title{Stiefel tropical linear spaces}

\author{Alex Fink$^1$}
\address{$^1$ School of Mathematical Sciences, Queen Mary University of London, United Kingdom.}
\email{$^1$ a.fink@qmul.ac.uk}

\author{Felipe Rinc\'on$^2$}
\address{$^2$ Mathematics Institute, University of Warwick, United Kingdom.}
\email{$^2$ e.f.rincon@warwick.ac.uk}

%\author{Alex Fink \and Felipe Rinc\'on}

\hypersetup{
    pdftitle={The tropical Stiefel map},
    pdfauthor={Alex Fink, Felipe Rincon}
    pdfproducer={pdfLaTeX}
}

\dedicatory{Dedicated to the memory of Andrei Zelevinsky.}

\thanks{
The authors thank Bernd Sturmfels for refusing to allow this project to
sink into oblivion, Kristin Shaw for helpful discussions,
and the referees, one of whom corrected a great many infelicities of presentation.
The first author was partially supported by the David and Lucille
Packard Foundation, and the second author by the EPSRC grant EP/I008071/1.
}

\maketitle

\section{Introduction}
Let $d \leq n$ be positive integers. 
In this paper we study a family of tropical linear spaces,
which we call \emph{Stiefel tropical linear spaces},
and their connections to other tropical combinatorial objects
which one may associate to a $d \times n$ tropical matrix.

Any classical $d \times n$ matrix with entries in a field $\mathbb K$
has an associated row space. If the matrix has full rank, 
this row space is $d$-dimensional
and thus yields a point of the Grassmannian $\cl\Gr(d,n)$,
affording the rational \emph{Stiefel map} 
$\mathbb K^{d\times n}\dashrightarrow\cl\Gr(d,n)$.
In tropical geometry, the Grassmannian is tropicalized
with respect to its Pl\"ucker embedding, and it has many of the
properties one might hope for; for instance, it remains a moduli space for
tropicalized linear spaces~\cite{SS}.
Tropicalizing the Stiefel map, one thus gets a map that assigns to each tropical matrix $A$
with entries in $\Rinf := \R \cup \{\infty\}$
a vector $\pi(A)$ in the tropical Grassmannian $\Gr(d,n)$,
namely its vector of tropical maximal minors.
This vector $\pi(A)$ of tropical Pl\"ucker coordinates
is in turn associated to a tropical linear space $L(A)$.
The combinatorial structure of $L(A)$ is determined by 
the regular matroid subdivision induced by $\pi(A)$ \cite{TLS, LTLS}.
We call the tropical linear spaces arising in this way
\emph{Stiefel tropical linear spaces}.

The Stiefel tropical linear space $L(A)$ is the tropicalization of the rowspace
of any sufficiently generic lift of the matrix $A$
to a matrix with entries in $\mathbb{K}$. In this sense, Stiefel tropical linear spaces
arise as tropicalizations of generic linear subspaces of $\mathbb{K}^n$.
Also, as we discuss in Section \ref{sec:tropical}, any Stiefel tropical linear 
space can be thought of as the smallest tropical linear space that ``stably'' contains a 
collection of points. More specifically,
Stiefel tropical linear spaces can be characterized as the tropical linear spaces that are dual to a stable intersection of tropical hyperplanes.

Each of the columns of a tropical matrix $A$ corresponds naturally to a tropical hyperplane in $\Rinf^d$,
so that $A$ determines an arrangement $\mc H(A)$ of $n$ tropical hyperplanes in $\Rinf^d$.
In a similar way, the rows of~$A$ give rise to an arrangement $\mc H(A^{\rm t})$ of $d$ tropical hyperplanes in $\Rinf^n$.
In Section \ref{sec:hyperplanes} we generalize some of the results in \cite{DS, TropOMs} to
show that the combinatorics of these tropical hyperplane arrangements 
are encoded by a regular subdivision $\mc S(A)$ of the root polytope 
$\Gamma_A = \conv \{(e_i,-e_j) : A_{ij} \neq \infty \}$.
Faces in these hyperplane arrangements are encoded by certain bipartite
subgraphs that we call ``tropical covectors'' (also called ``types'' in \cite{DS, TropOMs}),
and are dual to faces of the corresponding mixed subdivisions 
induced by $\mc S(A)$.

In Section \ref{sec:Stiefel TLS combinatorics} we prove an elegant
relationship between the hyperplane arrangement
$\mc H(A^{\rm t})$ and the matroid subdivision dual to $L(A)$.
\begin{theorem_rs}
The regular matroid subdivision $\mc D(A)$ induced by $\pi(A)$
is the restriction to the hypersimplex $\Delta_{d,n}$
of the mixed subdivision dual to $\mc H(A^{\rm t})$.
\end{theorem_rs}
This result, together with certain inequality descriptions for matroid polytopes
of transversal matroids that we give in Section \ref{sec:transversal_polytopes}, 
has the following corollary.
\begin{corollary_rt}
The facets of the regular matroid subdivision $\mc D(A)$
are the matroid polytopes of the transversal matroids associated to
the maximal tropical covectors of the hyperplane arrangement $\mc H(A)$.
\end{corollary_rt}
In this sense, matroid subdivisions corresponding to Stiefel tropical
linear spaces can be thought of as {\em regular transversal matroid subdivisions}.

In general, tropicalizations of algebraic morphisms in the na\"ive sense 
as tuples of polynomial functions over the tropical semiring
are poorly behaved. Their images typically fail
to be tropical varieties or dense subsets thereof, 
let alone tropicalizations of the classical algebro-geometric images.
One of the original motivations for this work
was to find out the extent of this failure for the Stiefel map,
and understand which tropical linear spaces are Stiefel tropical linear spaces.
In the case where $d=2$, where a tropical linear space can be regarded as
a metric tree with $n$ unbounded labelled leaves, the answer can be simply stated:
A tropical linear space in $\Gr(2,n)$ is a Stiefel tropical linear space
if and only if it is a caterpillar tree (see Example \ref{ex:caterpillar}).

As a first step for approaching this question in higher dimensions,
we consider a family of subsets of $[d] \times [n]$
which we call \emph{support sets}. They are introduced in Section~\ref{sec:support sets}.
These subsets have multiple significant interpretations.
For one, they correspond exactly to the minimal graphs whose transversal 
matroid is the uniform matroid $U_{d,n}$ \cite{Bondy};
for another, they index certain significant faces of the 
Newton polytope $\Pi_{d,n}$ of the product of all maximal minors
of a $d\times n$ matrix \cite{SZ}.
If $A \in \Rinf^{d \times n}$, its support is the subset 
$\supp(A) = \{ (i,j) \in [d] \times [n] : A_{ij} \neq \infty \}$.
In Section \ref{sec:tropical} we prove the following result.
\begin{corollary_si}
Every tropical Pl\"ucker vector of the form $\pi(A)$
can be realized in the same form by a matrix $A$ supported
on a support set.
\end{corollary_si}
\noindent Along the way, we prove in Proposition \ref{prop:dim of support face} a conjecture 
of Sturmfels and Zelevinsky stated in \cite[Conjecture 3.8]{SZ},
concerning the dimension of certain distinguished faces of the Newton polytope $\Pi_{d,n}$.
These results are given combinatorial utility
in Section~\ref{sec:Stiefel TLS combinatorics},
as we describe below.

We consider one further combinatorial object associated to~$A$,
first analyzed in \cite{SZ} by way of understanding the
Newton polytope mentioned above:
The matching multifield $\Lambda(A)$ records for each subset
$J \in \binom{[n]}{d}$ the positions where the minimum in the
permutation expansion of the tropical maximal minor with columns $J$ is attained.
We investigate how the
combinatorial structure of the tropical linear space $L(A)$
is related to the matching multifield $\Lambda(A)$.
Tropical combinatorics is acutely sensitive to supports, and
some of our results take their cleanest form when
we restrict our attention to matrices $A$ whose support
is a support set.
In particular, Theorem~\ref{thm:(b) <-> (c)}, Theorem~\ref{thm:(a) -> (c)}, and
Example~\ref{ex:(c) -/> (a)} imply the following result.
\begin{theorem*}
Let $\Sigma$ be a support set.
There is a bijection between combinatorial types of linear spaces $L(A)$ with $\supp(A)=\Sigma$
and coherent matching multifields supported on $\Sigma$,
associating $L(A)$ to $\Lambda(A)$ for each $A$.

Moreover, for matrices $A$ of support $\Sigma$, 
the objects $L(A)$ and $\Lambda(A)$ are determined by $\mc H(A)$,
but this is in general not a bijection.
\end{theorem*}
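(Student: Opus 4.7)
The theorem is a consequence of three subsequent results flagged in the excerpt: Theorem~\ref{thm:(b) <-> (c)} supplies the bijection between combinatorial types of $L(A)$ and coherent matching multifields $\Lambda(A)$ supported on $\Sigma$; Theorem~\ref{thm:(a) -> (c)} records that $\mc H(A)$ determines $\Lambda(A)$; and Example~\ref{ex:(c) -/> (a)} shows that $\Lambda(A)$ does not in general determine $\mc H(A)$. Granting these, the first paragraph of the theorem is Theorem~\ref{thm:(b) <-> (c)} verbatim, the ``determined by $\mc H(A)$'' claim follows by composing Theorem~\ref{thm:(a) -> (c)} with the direction (c) $\Rightarrow$ (b) of the bijection, and the failure of injectivity follows from Example~\ref{ex:(c) -/> (a)}. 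So the real work is to prove the three constituent results.

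To prove Theorem~\ref{thm:(b) <-> (c)}, my approach would rest on the duality of Theorem~\ref{r:restrictsubdiv}: the matroid subdivision $\mc D(A)$ dual to $L(A)$ is the restriction to $\Delta_{d,n}$ of the mixed subdivision coming from $\mc H(A^{\rm t})$. The multifield $\Lambda(A)$ records, for each $J \in \binom{[n]}{d}$, which perfect matchings $[d] \to J$ attain the minimum weight. One direction reads $\Lambda(A)$ off the combinatorial type of $\mc D(A)$ by examining the tight face of the restricted mixed subdivision at the vertex $e_J$ of $\Delta_{d,n}$. The converse reconstructs the subdivision from the multifield; here the support-set hypothesis is essential, because the dimension count in Proposition~\ref{prop:dim of support face} guarantees that the distinguished faces of $\Pi_{d,n}$ indexed by $\Sigma$ have enough structure to force the compatible matchings to pin down $\mc D(A)$ face by face. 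Coherence of the multifield matches the regularity of $\mc D(A)$.

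For Theorem~\ref{thm:(a) -> (c)}, the key observation is that a tropical covector of $\mc H(A)$ at a point $x \in \Rinf^d$ records for each column $j$ exactly which rows $i$ achieve the minimum of $A_{ij}+x_i$; aggregating greedily over any $J \in \binom{[n]}{d}$ yields precisely the minimizing perfect matchings, and hence $\Lambda(A)$ is combinatorially determined by the covector data of $\mc H(A)$. The main obstacle is constructing Example~\ref{ex:(c) -/> (a)}: one must exhibit two matrices $A, A'$ supported on the same support set $\Sigma$ that agree as matching multifields but have non-isomorphic covector decompositions of $\Rinf^d$. The natural strategy is to search in the smallest case where some bounded region of $\mc H(A)$ can split or collapse without changing which positions are optimal in any $d \times d$ subminor; this decouples the fine arrangement geometry from the coarser invariant $\Lambda$ by exploiting covectors in the interior of several apices that are not witnessed by any full-sized matching. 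The hardest step is certifying that no minor discriminates the two matrices, but this is a finite check once the ambiguity is localized correctly.
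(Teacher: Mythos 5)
Your opening reduction is exactly the paper's own derivation: the displayed theorem is obtained by combining Theorem~\ref{thm:(b) <-> (c)} (the bijection), Theorem~\ref{thm:(a) -> (c)} ($\mc H(A)$ determines $\Lambda(A)$, hence also $\mc D(A)$ and the combinatorial type of $L(A)$ after composing with the multifield-to-subdivision direction), and Example~\ref{ex:(c) -/> (a)} (non-injectivity). The problems are in your sketches of the constituent results, and they are not merely cosmetic. First, you have the roles of the two directions of Theorem~\ref{thm:(b) <-> (c)} inverted. The paper's Proposition~\ref{prop:(c) -> (b)} (multifield $\Rightarrow$ subdivision) is the direction that holds in greater generality than support sets; the support-set hypothesis is essential for the opposite direction, Corollary~\ref{cor:(b) -> (c)} (subdivision $\Rightarrow$ multifield), which you present as the easy step of ``examining the tight face of the restricted mixed subdivision at the vertex $e_J$ of $\Delta_{d,n}$.'' That step does not work as stated: the cells of $\mc D(A)$ are matroid polytopes of transversal matroids, so the combinatorial type of $\mc D(A)$ records only which column sets are bases of the matroid selected by a functional, not which matchings realize them, and distinct covector graphs can present the same transversal matroid. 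Recovering the graphs $\tc(u)^{\rm t}$ (and hence the matchings) from the bare subdivision is precisely the content of Proposition~\ref{prop:(b) -> (c)}, whose proof reconstructs each row of the covector by a walking procedure in the dual complex relative to the normal fans $\mc N(F_i)$, and this is where the support-set hypothesis enters; the dimension count of Proposition~\ref{prop:dim of support face} plays no role in either direction, and the multifield-to-subdivision direction itself requires a substantial argument (spanning forests, tropical eigenvalues) rather than following formally from Theorem~\ref{r:restrictsubdiv}.

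Second, in your sketch of Theorem~\ref{thm:(a) -> (c)} the substantive half is missing. The easy inclusion is that a covector of $\TC(A)$ containing a matching on $J$ certifies those matchings as minimizing; the nontrivial inclusion is that \emph{every} minimizing matching on $J$ is contained in some covector, and ``aggregating greedily'' over the per-column minimizer data does not establish this (greedy column choices do not produce minimum-weight matchings, and a minimizing matching need not be visible at any single point's covector without an argument). The paper proves it by perturbing $A$ so the given matching becomes the unique minimizer on $J$, invoking the tropical-rank result of \cite{DSS} to obtain a $d$-dimensional cell whose covector is that matching, and then passing to the coarser subdivision $\mc S(A)$. Finally, for the non-bijectivity claim you offer only a search strategy; to prove the theorem one needs an actual witness such as the paper's $4\times 6$ matrix $A(t)$ in Example~\ref{ex:(c) -/> (a)}, where $\Lambda(A(t))$ is constant for $|t|<1$ while $\TC(A(t))$ changes as $t$ crosses $0$. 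As it stands, the proposal correctly identifies the architecture but leaves the two hard steps and the counterexample unproved, with the support-set hypothesis attached to the wrong direction.
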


Finally, in Section \ref{sec:tropical linear maps} we study the tropical linear map $\odot A$
from $\R^d$ to $\R^n$ given by $x \mapsto x \odot A$, in connection to $L(A)$.
The image of this map is a subset of $L(A)$, but unlike the classical case it is in general
a proper subset. In Theorem \ref{r:folding cells} we give a polyhedral description of the tropical linear space $L(A)$
in terms of this map and the hyperplane complex $\mc H(A)$, which expresses $L(A)$
as the union of Minkowski sums of faces of $\im(\odot A)$ with suitable orthants.
Moreover, in Theorem \ref{r:bounded} we prove that the bounded part of $L(A)$ is covered by $\im(\odot A)$,
and we explicitly describe the subcomplex of $\R^d$ it corresponds to.

\subsection{Conventions}
If $P$ is a polyhedron and $u$ a functional on its ambient space,
then $\face_u P$ is the face of $P$ on which $u$ is \emph{minimized}.
If $S$ is a regular subdivision corresponding to the lifted polyhedron 
$\widehat S$, whose faces minimizing the last coordinate project to $S$
on dropping this coordinate, then $\face_u S$ is the projection of $\face_{(u,1)}\widehat S$,
and is called the face of $S$ {\em selected} by $u$.
Normal fans and normal subdivisions to regular subdivisions
are defined with the same conventions: that is, we use inner normal fans.

% We primarily conceive of graphs as their edge sets,
% so if we say without elaboration that $H$ is a ``subgraph'' of $G$ 
% then $G$ and $H$ have the same vertex set.  
% (This of course does not apply to ``induced subgraph''.)

\section{Matroids and support sets}\label{sec:support sets}
In this section we first introduce some basic
matroidal preliminaries that we will need later in our study. We then define support sets, 
a special class of bipartite graphs that arise naturally in our context, and we recall
some of their main properties from~\cite{SZ}. 

Throughout this paper, we will make constant use of the natural bijection between
bipartite simple graphs on vertex set $[d]\amalg [n]$ and
subsets of $[d]\times[n]$.  We do not differentiate these two
kinds of objects in the notation. As a convention, we reserve the letter $i$
for left vertices of our bipartite graphs (i.e., vertices in $[d]$), and the letter $j$ for right vertices (i.e., vertices in $[n]$). 
%In agreement with the terminology used for matrices of size $d \times n$, we will refer to left vertices as \emph{rows}, and to right vertices as \emph{columns}.
The capital letters $I$ and $J$ are reserved for sets of 
objects called $i$ and $j$, respectively.  
In particular, we define the notations for sets of neighbours of a 
left vertex in a bipartite graph $\Sigma \subseteq [d] \times [n]$, or set thereof: 
\begin{align*}
J_i(\Sigma) &= \{j:(i,j)\in\Sigma\}, \\
J_I(\Sigma) &= \bigcup_{i\in I} J_i(\Sigma) = \{j: (i,j)\in\Sigma\mbox{ for some $i\in I$}\},
\end{align*}
and the same for right vertices:
\begin{align*}
I_j(\Sigma) &= \{i:(i,j)\in\Sigma\}, \\
I_J(\Sigma) &= \bigcup_{j\in J} I_j(\Sigma) = \{i: (i,j)\in\Sigma\mbox{ for some $j\in J$}\}.
\end{align*}

\subsection{Matroids}\label{ssec:matroids}
We will assume the reader has a basic knowledge of some of the fundamental notions of matroid theory. A good general reference for this topic is \cite{Oxley}.
Another useful reference is \cite{Murota}, written from a perspective heavier on optimization,
and which goes on to treat valuated matroids (aka tropical Pl\"ucker vectors)
in its section~5.2, prefiguring some tropical results. 

A \emph{partial matching} is a collection of edges $\{(i_1,j_1),\ldots,(i_s,j_s)\} \subseteq [d]\times[n]$ 
such that all the $i_k$ are distinct, as are all the $j_k$.
This partial matching is said to be from 
the set $I = \{i_1,\ldots,i_s\}$ to the set $J = \{j_1,\ldots,j_s\}$, 
or on the set of left vertices $I$ and the set of right vertices $J$.
A \emph{matching} is a maximal partial matching with $[d]$ as its set of left vertices.
In other words, a matching is a set of edges in $[d]\times[n]$ of the form
$\{(1,j_1),\ldots,(d,j_d)\}$, where all the $j_k$ are distinct.
Matchings and partial matchings are at the core of our combinatorial study. 
Matchings appear in matroid theory also under the name \emph{transversals}, but we adopt the graph-theoretic name here.  

Suppose $\Sigma \subseteq [d] \times [n]$ is a bipartite graph on the set of vertices $[d] \amalg [n]$.
The rank $d$ \emph{transversal matroid} $M(\Sigma)$ of this graph 
is the matroid on the ground set $[n]$ 
whose bases are all $d$-subsets $B \subseteq [n]$ for which $\Sigma$ contains a matching
on the set $B$. Note that we are allowing $M(\Sigma)$ to be the matroid with no bases, in the case
that $\Sigma$ contains no matchings. This is not standard practice;
indeed, the matroid with no bases is not usually admitted as a matroid at all.

To any rank $d$ matroid $M$ on ground set~$[n]$ one can associate a
\emph{matroid} (\/\emph{basis}\/) \emph{polytope} \cite{Edmonds, GGMS}
\[\Gamma_M = \conv\Big\{\sum_{j\in B} e_j : \mbox{$B$ is a basis of $M$}\Big\}.\]
This polytope is contained
in the hyperplane $\{x_1+\cdots+x_n=d\}$ of $\R^n$,
and its codimension (in $\R^n$) is equal to the number of connected components of $M$.
If $M$ is the matroid with no bases then $\Gamma_M$ is the empty polytope.

\subsection{Matching fields}
Throughout the paper we will be interested in collections of matchings contained in some bipartite graph $\Sigma$.
\begin{definition}
A {\em matching multifield} $\Lambda$ is a set of matchings
containing at least one matching on each subset $J\in\binom{[n]}d$.
A matching multifield $\Lambda$ is a {\em matching field} if
$\Lambda$ contains a unique matching on each $J\in\binom{[n]}d$.
The {\em support} of a matching (multi)field $\Lambda$ is the union of all the 
edges appearing in some matching in $\Lambda$.
\end{definition}

Let $\Rinf$ be the set $\R\cup\{\infty\}$; in Section~\ref{sec:tropical}
we will see that this is the underlying set of the tropical semifield.
Let $A = (a_{ij}) \in\Rinf^{d\times n}$, and assume that the \emph{support} of~$A$
\[
\supp(A) = \{ (i,j) \in [d] \times [n] : a_{ij} \neq \infty \}
\]
contains at least one matching
on each set~$J \in \binom{[n]}{d}$.  
For such a matrix $A\in\Rinf^{d\times n}$, let
$\Lambda(A)$ denote the matching multifield containing, for
each set of columns $J$, exactly the matchings $\lambda$ on~$J$
which minimize $\sum_{(i,j)\in \lambda} a_{ij}$.
If $A$ is suitably generic then $\Lambda(A)$ will be a matching field.
%to be precise if none if the maximal square submatrices of~$A$ are tropically singular.
Using the terminology of Section~\ref{sec:tropical}, the matching multifield $\Lambda(A)$
encodes the positions achieving the minimum in the permutation expansion of each tropical maximal minor of~$A$.

\begin{definition}\label{def:coherent}
The matching multifield $\Lambda$ is {\em coherent} if it arises as 
$\Lambda(A)$ for some matrix $A\in\Rinf^{d\times n}$.  
\end{definition}

We now describe a polyhedral perspective on these notions.
The ($d$\/th) \emph{Birkhoff polytope} is the convex hull
of all permutation matrices in $\R^{d\times d}$,
or equivalently, the Newton polytope of the determinant of a $d\times d$ matrix of indeterminates.
By embedding $\R^{d\times d}$ as the coordinate subspace of
submatrices $\R^{d\times J}\subseteq \R^{d\times n}$
supported on columns $J$, 
we get an image $\Pi_{d,J}$ of the Birkhoff polytope.
A matching on~$J$ is a vertex of $\Pi_{d,J}$.
Taking a matrix $A\in\R^{d\times n}$ to define a linear functional on $\R^{d\times n}$,
a matching multifield $\Lambda$ is coherent if and only if the vertices it selects
of each $\Pi_{d,J}$ are exactly the vertices minimized by this functional.
Let $\Pi_{d,n}$ be the Newton polytope of the product of all maximal minors of
a $d\times n$ matrix, i.e.\ the Minkowski sum of all the $\Pi_{d,J}$.
Vertices of $\Pi_{d,n}$ correspond then to coherent matching fields.
More generally, if $\Lambda$ is a coherent matching multifield 
and $A$ is an associated linear functional,
the face $\face_A \Pi_{d,n}$ uniquely determines $\Lambda$, since 
it determines each of the faces $\face_A \Pi_{d,J}$ of the summands.

The next proposition is a simple generalization of \cite[Proposition~3.1]{SZ} to matching multifields; 
its proof can be obtained following the same arguments, \emph{mutatis mutandis}. Condition (d) is immediate from the definitions in Section~\ref{ssec:matroids}.

\begin{proposition}\label{r:SZ3.1}
If $\Sigma\subseteq[d]\times[n]$, the following are equivalent.
\begin{enumerate}
\renewcommand{\labelenumi}{(\alph{enumi})}
\item $\Sigma$ contains the support of a coherent matching field.
\item $\Sigma$ contains the support of a matching field.
\item For each nonempty $I\subseteq[d]$, 
$|J_I(\Sigma)|\geq n-d+|I|$.
\item The transversal matroid $M(\Sigma)$ is the uniform matroid $U_{d,n}$.
\end{enumerate}
Moreover, ``field'' can be replaced by ``multifield'' in (a) and~(b).
\end{proposition}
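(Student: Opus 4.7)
The plan is to follow the scheme of \cite[Proposition~3.1]{SZ}, verifying the cycle (a) $\Rightarrow$ (b) $\Rightarrow$ (d) $\Leftrightarrow$ (c) $\Rightarrow$ (a) and grafting in the multifield variants along the way. The implication (a) $\Rightarrow$ (b) (and its multifield analogue) is immediate since every matching field is a matching multifield. For (b) $\Rightarrow$ (d), if $\Sigma$ contains the support of any matching multifield then for each $J \in \binom{[n]}{d}$ some matching on $J$ lies in $\Sigma$, so every such $J$ is a basis of $M(\Sigma)$ and hence $M(\Sigma) = U_{d,n}$; conversely, (d) directly produces a multifield supported on $\Sigma$, namely the one consisting of \emph{all} matchings contained in $\Sigma$, so the multifield version of (b) and (d) are essentially equivalent by unwinding definitions.

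For (c) $\Leftrightarrow$ (d) I would apply Hall's marriage theorem: $J \in \binom{[n]}{d}$ is a basis of $M(\Sigma)$ iff the bipartite subgraph $\Sigma \cap ([d] \times J)$ admits a matching saturating $[d]$, iff $|J_I(\Sigma) \cap J| \geq |I|$ for every $I \subseteq [d]$. Demanding this simultaneously for all $J$ reduces, for each nonempty $I$, to the extremal choice $J \supseteq [n] \setminus J_I(\Sigma)$ (which exists whenever $|J_I(\Sigma)| \geq n-d$; otherwise Hall's condition fails outright, in agreement with (c)); the resulting bound is $|J_I(\Sigma)| - (n-d) \geq |I|$, which is exactly (c).

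The substantive direction is (d) $\Rightarrow$ (a): one must upgrade the existence of a matching multifield to the existence of a \emph{coherent} matching field. I would do this by taking a matrix $A$ with $A_{ij} = \infty$ off $\Sigma$ and with $A_{ij}$ algebraically independent over $\mathbb{Q}$ on $\Sigma$. By (d) the minimum of $\sum_{(i,j) \in \lambda} A_{ij}$ over matchings $\lambda$ on any given $J \in \binom{[n]}{d}$ is finite (there is some $\lambda \subseteq \Sigma$), and genericity makes it uniquely attained by a matching contained in $\Sigma$. Hence $\Lambda(A)$ is a coherent matching field with $\supp \Lambda(A) \subseteq \Sigma$, yielding (a). The only point requiring any care is the extremal-case bookkeeping in the Hall argument for (c) $\Leftrightarrow$ (d); the remaining implications are formal, which is why the authors merely refer to \cite[Proposition~3.1]{SZ}.
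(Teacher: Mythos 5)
Your proposal is correct, and it is essentially the approach the paper intends: the paper gives no argument of its own beyond citing \cite[Proposition~3.1]{SZ} ``mutatis mutandis'' and noting that (d) is definitional, and your cycle --- the trivial implications, Hall's theorem with the extremal choice of $J$ for (c)$\Leftrightarrow$(d), and a generic (e.g.\ $\mathbb{Q}$-algebraically independent) matrix supported on $\Sigma$ for (d)$\Rightarrow$(a) --- is exactly the Sturmfels--Zelevinsky scheme, correctly extended to multifields. The only value you add beyond the paper is that you actually supply the details, including the bookkeeping in the case $|J_I(\Sigma)|<n-d$, and these are sound.
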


Hall's marriage theorem can be stated as solving the problem of determining
when $d$ brides and $d$ grooms can be matched into $d$ marriages,
given the set of bride-groom pairs which are marriageable.  
Postnikov~\cite{PostPAB} extends this to the problem
in which there are $d+1$ brides, any one of which may be stolen away
by a dragon before the marriages are to be made, and states
the necessary and sufficient {\em dragon marriage condition} for
when the marriages are always still possible.  
The equivalence (b)$\Leftrightarrow$(c) of Proposition~\ref{r:SZ3.1} is a 
generalization, which one might call a ``poly-dragon marriage condition'':
now there are $n\geq d$ brides, and any $n-d$ may be stolen by dragons.  

\begin{theorem}[\cite{SZ}, Proposition~3.6]\label{r:SZ3.6}
There exists a (coherent) matching field with support $\Sigma$
if and only if condition~(c) of Proposition~\ref{r:SZ3.1} holds
and equality is achieved when $|I|=1$, i.e.\ 
$|J_i(\Sigma)|=n-d+1$ for each~$i$ (or equivalently, $|\Sigma| = d(n-d+1)$).
\end{theorem}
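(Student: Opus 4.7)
The plan is to handle the two directions separately, with the backward direction reducing essentially trivially to the forward direction once the latter is established.

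For the forward direction: suppose a coherent matching field $\Lambda$ exists with support $\Sigma$. Condition (c) of Proposition~\ref{r:SZ3.1} follows directly from the implication (a)$\Rightarrow$(c). The inequality $|J_i(\Sigma)|\geq n-d+1$ is condition (c) applied with $I=\{i\}$, so the nontrivial content is the reverse inequality $|J_i(\Sigma)|\leq n-d+1$. To establish it, I would write $\Lambda = \Lambda(A)$ for a generic matrix $A$ supported on $\Sigma$ (setting entries outside $\Sigma$ to $\infty$ does not alter $\Lambda$), and consider the vertex $v=\sum_J\mu_J$ of the Newton polytope $\Pi_{d,n}$, whose support is exactly $\Sigma$. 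The row-$i$ projection of $v$ lies in the Minkowski sum $\sum_J\conv\{e_j:j\in J\}$ of simplices, one for each $d$-subset of $[n]$, and its support is $J_i(\Sigma)$. Vertices of that Minkowski sum selected by a generic linear functional $c\in\R^n$ are of the form $\sum_J e_{\argmin_{j\in J}c_j}$, whose support is exactly the first $n-d+1$ elements in the order induced by $c$, giving the target bound whenever $v_{i,\cdot}$ arises from such a single functional.

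The main obstacle is that $v_{i,\cdot}$ need not itself be such a vertex: the effective cost of matching column $j$ to row $i$ in a given $J$ depends on the minimum completion cost of $[d]\setminus\{i\}$ on $J\setminus\{j\}$, so the per-$J$ selections $\mu_J(i)$ are not dictated by a single $\R^n$-functional when $d\geq 3$. To close this gap, I would argue by induction on $d$. The base case $d=2$ is clean: coherent matching fields on $[n]$ correspond to total orders on $[n]$ via the row-differences $a_{1j}-a_{2j}$, so $J_i(\Sigma)$ is an initial segment of that order of size $n-1$. For the inductive step $d\geq 3$, a $2\times 2$ exchange argument on pairs of matchings identifies, for each $i'\in[d]\setminus\{i\}$, a column that is systematically matched to rows in $[d]\setminus\{i\}$ whenever it appears in $J$; making $d-1$ such choices distinct, rather than colliding—which the naive choice $\argmin_j(a_{i'j}-a_{ij})$ may do—is the delicate step, and is what forces an inductive peeling of one column (or one row) at a time rather than a one-shot argument.

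For the backward direction: suppose $\Sigma$ satisfies (c) with $|J_i(\Sigma)|=n-d+1$ for each $i$. By Proposition~\ref{r:SZ3.1}, there is a coherent matching field $\Lambda_0$ with support $\Sigma_0\subseteq\Sigma$. Applying the forward direction to $\Lambda_0$ yields $|J_i(\Sigma_0)|=n-d+1=|J_i(\Sigma)|$ for every $i$. Combined with $J_i(\Sigma_0)\subseteq J_i(\Sigma)$ and equal cardinalities, this forces $J_i(\Sigma_0)=J_i(\Sigma)$ for each $i$, hence $\Sigma_0=\Sigma$. So the hard step throughout is the upper bound $|J_i(\Sigma)|\leq n-d+1$ in the forward direction; every other piece is either immediate from Proposition~\ref{r:SZ3.1} or a clean combinatorial consequence.
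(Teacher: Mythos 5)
Your reduction is sound as far as it goes: the backward direction does follow from Proposition~\ref{r:SZ3.1} together with the forward direction, the inequality $|J_i(\Sigma)|\geq n-d+1$ is immediate from condition (c), and your base case $d=2$ is correct. You are also right to abandon the polytopal one-shot attempt: for $d\geq 3$ the selections $J\mapsto\mu_J(i)$ are not governed by a single functional $c\in\R^n$, so the row-$i$ projection of $v$ need not be a vertex of $\sum_J\Delta_J$. Note also that coherence must enter essentially in the remaining inequality: for $d=2$, $n=3$ the matchings $\{(1,1),(2,2)\}$, $\{(1,3),(2,1)\}$, $\{(1,2),(2,3)\}$ form a (non-coherent) matching field whose support is all of $[2]\times[3]$, so no purely matching-field-combinatorial argument can give $|J_i(\Sigma)|\leq n-d+1$. (For calibration: the paper does not prove this statement at all; it quotes it from \cite{SZ}, so the comparison here is against what a complete proof requires.)

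The gap is that the upper bound $|J_i(\Sigma)|\leq n-d+1$ --- the entire content of the theorem --- is never established for $d\geq 3$; what you offer is a plan whose key step is exactly the missing content. You do not say to which smaller matrix the induction hypothesis is applied, and neither peeling is compatible with ``avoidance'' in any obvious way: if you delete an avoided column $j_0$, the matching field of the resulting $d\times(n-1)$ matrix only sees the sets $J\not\ni j_0$, so a column avoided there may still be matched to row $i$ in some $\mu_J$ with $j_0\in J$; if you delete a row $i'\neq i$, the optimal matchings of the submatrix bear no a priori relation to the $\mu_J$. Controlling how a coherent matching field restricts to submatrices is precisely the nontrivial ``linkage'' machinery that Sturmfels and Zelevinsky develop for this purpose. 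Likewise, the ``$2\times2$ exchange argument'' that should produce, for each $i'\neq i$, a column always claimed by the rows $[d]\setminus\{i\}$, together with a rule making these $d-1$ columns distinct, is exactly what is absent; you acknowledge that the naive candidates collide but give no rule that works. As written, the argument proves the theorem only for $d\leq 2$. For what it is worth, a one-shot exchange argument does exist and needs no induction: take $A$ finite and generic with $\Lambda=\Lambda(A)$ and let $\nu$ be an injection of $[d]\setminus\{i\}$ into $[n]$ minimizing $\sum_{i'\neq i}\bigl(a_{i'\nu(i')}-a_{i\nu(i')}\bigr)$; if some $\mu_J$ matched row $i$ to a column in the image of $\nu$, the alternating path in $\mu_J\cup\nu$ starting at row $i$ would yield either a strictly cheaper matching on $J$ or a strictly better $\nu$, a contradiction, so all $d-1$ columns in the image of $\nu$ are avoided by row $i$.
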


\begin{definition}
We call a set $\Sigma$ satisfying the equivalent 
conditions of Theorem~\ref{r:SZ3.6} a {\em support set}.  
\end{definition}

The cocircuits of the uniform matroid $U_{d,n}$ are exactly
the subsets of $[n]$ of size $n-d+1$.  Therefore, 
the support sets $\Sigma$ picked out by Theorem~\ref{r:SZ3.6}
are the graphs recognized in \cite[Section~3]{Bondy} as the minimal bipartite graphs
among those whose transversal matroid is $U_{d,n}$,
though their treatment in this context goes back to \cite{LasVergnas,BondyWelsh}.

In the case $n=d+1$ of the usual dragon marriage condition,
we have a convenient graph-theoretical description of the
support sets as a consequence of \cite[Theorem~2.4]{SZ}.

\begin{proposition}\label{r:SZ2.4}
If $n=d+1$, then $\Sigma$ is a support set if and only if,
as a bipartite graph, it is a tree in which every left vertex has degree equal to~$2$.
\end{proposition}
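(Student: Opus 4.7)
The plan is to exploit the fact that when $n = d+1$, Theorem~\ref{r:SZ3.6} forces every left vertex of a support set $\Sigma$ to have degree exactly $n-d+1 = 2$, so $|\Sigma| = 2d$. Since the bipartite graph sits on $V = d + (d+1) = 2d+1$ vertices, this gives $|E| = V - 1$, whence $\Sigma$ is a tree if and only if it is acyclic if and only if it is connected. Thus both directions of the equivalence reduce to comparing cycles in $\Sigma$ against the neighborhood-expansion condition (c) of Proposition~\ref{r:SZ3.1}, which in this case reads $|J_I(\Sigma)| \geq |I| + 1$ for every nonempty $I \subseteq [d]$.

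For the forward direction, I would assume $\Sigma$ is a support set and rule out cycles. Any cycle in a bipartite graph has the form $i_1, j_1, i_2, j_2, \ldots, i_k, j_k, i_1$ with $i_l \in [d]$ and $j_l \in [d+1]$. Setting $I = \{i_1, \ldots, i_k\}$, each $i_l$ has degree exactly $2$ in $\Sigma$ and its two incident edges both lie in the cycle, so $J_I(\Sigma) = \{j_1, \ldots, j_k\}$. Then $|J_I(\Sigma)| = k = |I|$, contradicting $|J_I(\Sigma)| \geq |I| + 1$. So $\Sigma$ is acyclic, and by the edge count it is a tree.

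For the converse, I would suppose $\Sigma$ is a tree with every left vertex of degree $2$ and verify the conditions of Theorem~\ref{r:SZ3.6}. The condition $|\Sigma| = d(n-d+1) = 2d$ is automatic. To check condition (c), fix a nonempty $I \subseteq [d]$ and consider the subgraph $\Sigma_I \subseteq \Sigma$ consisting of all edges incident to $I$. As a subgraph of a tree, $\Sigma_I$ is a forest; it has $|I| + |J_I(\Sigma)|$ vertices and $2|I|$ edges. The inequality $E \leq V - 1$ for forests yields $2|I| \leq |I| + |J_I(\Sigma)| - 1$, i.e.\ $|J_I(\Sigma)| \geq |I| + 1$, as needed.

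Nothing here is really an obstacle: the single observation that does all the work in both directions is that a subgraph of a tree is a forest (contrapositively, in the forward direction, that a cycle's left vertices have no external neighbors once they already have degree $2$ inside the cycle). The only care needed is bookkeeping that the bipartite cycles have even length $\geq 4$, which is automatic since we work with simple graphs.
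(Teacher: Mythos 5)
Your proof is correct and complete. Note that the paper itself offers no argument for this statement --- it simply records it as a consequence of \cite[Proposition~2.4]{SZ} --- so there is no in-paper proof to compare against; what you have supplied is a self-contained elementary replacement for that citation. Both halves check out: in the forward direction, a cycle through left vertices $I$ uses up both edges at each vertex of $I$ (since degrees are exactly $2$ by Theorem~\ref{r:SZ3.6}), forcing $|J_I(\Sigma)|=|I|$ and contradicting condition~(c) of Proposition~\ref{r:SZ3.1}; together with $|\Sigma|=2d$ on $2d+1$ vertices, acyclicity already gives a single component, hence a tree. In the converse, the subgraph of edges incident to $I$ is a forest with $2|I|$ edges on $|I|+|J_I(\Sigma)|$ vertices, giving $|J_I(\Sigma)|\geq |I|+1$, and the degree condition $|J_i(\Sigma)|=2$ is hypothesized, so Theorem~\ref{r:SZ3.6} applies. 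A minor remark: your converse never actually uses connectedness, only acyclicity --- which is harmless, since a forest on $[d]\amalg[d+1]$ with all left degrees $2$ has $2d$ edges and is therefore automatically a tree, so the statement with ``forest'' in place of ``tree'' is equivalent.
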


If $\Sigma \subseteq [d] \times [n]$ is any support set,
then there exists a face $\Pi_{d,n}(\Sigma)$ 
of~$\Pi_{d,n}$ whose vertices are exactly
the vertices of~$\Pi_{d,n}$ supported on~$\Sigma$. 
It is the face maximising the linear functional sending a matrix
to the sum of its entries in positions~$\Sigma$ \cite[Proposition~3.7]{SZ}.
The face $\Pi_{d,n}(\Sigma)$
is called a {\em support face}. In fact, it is a consequence of Theorem~\ref{r:SZ3.6} 
that every vertex of $\Pi_{d,n}$ is 
contained in a unique support face, since given a matrix $A\in\R^{d\times n}$
selecting a vertex, the entries not in the support of its matching field may be
replaced by $\infty$.

For later use, we record an immediate consequence of the
discussion following Definition~\ref{def:coherent}.
\begin{proposition}\label{r:bij iii iv}
Suppose $\Sigma$ contains a support set. There is a bijection between 
coherent matching multifields supported
on~$\Sigma$ and faces of~$\Pi_{d,n}(\Sigma)$,
which sends $\Lambda(A)$ to $\face_A \Pi_{d,n}(\Sigma)$.
\end{proposition}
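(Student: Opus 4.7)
The plan is to build directly on the polyhedral discussion just before the statement, which already does most of the work. That discussion records that $\Pi_{d,n}=\sum_{J\in\binom{[n]}{d}}\Pi_{d,J}$ as a Minkowski sum, so $\face_A\Pi_{d,n}=\sum_J\face_A\Pi_{d,J}$ for any linear functional $A$, and that the vertices of $\face_A\Pi_{d,J}$ are exactly the matchings on $J$ belonging to $\Lambda(A)$. Consequently $\face_A\Pi_{d,n}$ and $\Lambda(A)$ mutually determine one another; I will reuse this throughout.

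To define the map, I would start from a coherent matching multifield $\Lambda$ supported on $\Sigma$ and choose any $A\in\Rinf^{d\times n}$ with $\Lambda(A)=\Lambda$. Replacing every entry of $A$ outside $\supp(\Lambda)\subseteq\Sigma$ by $\infty$ does not alter $\Lambda(A)$, so I may assume $\supp(A)\subseteq\Sigma$. Each matching in $\Lambda(A)$ then lies in $\Sigma$, so every vertex of $\face_A\Pi_{d,n}$ is supported on $\Sigma$, and hence $\face_A\Pi_{d,n}\subseteq\Pi_{d,n}(\Sigma)$. Since $\Pi_{d,n}(\Sigma)$ is itself a face of $\Pi_{d,n}$, its faces are precisely the faces of $\Pi_{d,n}$ it contains, so $\face_A\Pi_{d,n}$ is a face of $\Pi_{d,n}(\Sigma)$. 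Independence of the choice of $A$ is immediate from the mutual-determination property above, giving well-definedness.

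Injectivity is the same observation read in the other direction: two coherent multifields that produce the same face produce the same vertices in each summand $\face_A\Pi_{d,J}$, hence the same matchings on each $J$, hence the same multifield. For surjectivity I would take an arbitrary face $F$ of $\Pi_{d,n}(\Sigma)$; since $\Pi_{d,n}(\Sigma)$ is a face of $\Pi_{d,n}$, $F$ is a face of $\Pi_{d,n}$ too, so $F=\face_A\Pi_{d,n}$ for some finite $A\in\R^{d\times n}$. Every vertex of $F$ lies in $\Pi_{d,n}(\Sigma)$ and is therefore supported on $\Sigma$, and decomposing along the Minkowski sum shows that every matching in $\Lambda(A)$ is supported on $\Sigma$. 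Thus $\Lambda(A)$ is a coherent matching multifield supported on $\Sigma$ whose image is $F$.

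I do not expect a serious obstacle here; the proof is essentially bookkeeping. The only point requiring a little care is making sure the face $\face_A\Pi_{d,n}$ actually lands inside $\Pi_{d,n}(\Sigma)$, which is handled by the $\infty$-padding trick in one direction and by the containment of $F$'s vertices in $\Pi_{d,n}(\Sigma)$ in the other.
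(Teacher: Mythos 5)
Your proposal is correct and follows exactly the route the paper intends: the paper states Proposition~\ref{r:bij iii iv} as an immediate consequence of the Minkowski-sum discussion after Definition~\ref{def:coherent} (faces of $\Pi_{d,n}=\sum_J\Pi_{d,J}$ decompose as sums of faces $\face_A\Pi_{d,J}$, whose vertices are the matchings of $\Lambda(A)$, plus the $\infty$-padding observation used for support faces), and your write-up just makes that bookkeeping explicit, including the well-definedness, injectivity, and surjectivity checks.
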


Proposition~\ref{prop:dim of support face} 
ties off the loose end which is Conjecture~3.8 of~\cite{SZ}.
Before proving it, we will need a lemma which will
be helpful for understanding cycles in our bipartite graphs.

\begin{lemma}\label{lem:spanning tree without left leaves}
Suppose $d<n$, and
let $G$ be a connected bipartite graph on the vertex set $[d]\amalg[n]$
such that every edge of $G$ is contained in a matching.
Then $G$ contains a spanning tree with no leaves in the set $[d]$.
Moreover, $M(G)$ is connected.
\end{lemma}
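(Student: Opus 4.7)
My plan is to first establish an easy preliminary fact, then tackle the spanning tree construction, and finally the connectedness of $M(G)$. The preliminary is that every left vertex of $G$ has $G$-degree at least $2$: if $i\in[d]$ had a unique neighbour $j$, then every matching of $G$ would have to include $(i,j)$, so no edge $(i',j)$ with $i'\neq i$ could lie in any matching, forcing $\deg_G(j)=1$ by the matching-covered hypothesis and making $\{i,j\}$ a connected component of $G$ — contradicting connectivity since $d+n\geq 3$.

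For the spanning tree, I would take a spanning tree $T$ of $G$ minimizing the number of left leaves and derive a contradiction assuming any exist. If $i_0$ is a left leaf of $T$ with tree-neighbour $j_0$, the preliminary gives another edge $(i_0,j_1)\in G\setminus T$; adding it creates a unique cycle $C$ in $T+(i_0,j_1)$. If $C$ contains a left vertex $i'\neq i_0$ with $\deg_T(i')\geq 3$, then exchanging an edge of $C$ at $i'$ for $(i_0,j_1)$ strictly reduces the number of left leaves, contradicting minimality. Otherwise every left vertex on $C\setminus\{i_0\}$ has $T$-degree exactly $2$ with both its edges on $C$; this ``bad case'' is the main obstacle. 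To resolve it I plan to invoke the matching-covered hypothesis more strongly: the edge $(i_0,j_1)$ lies in some matching $\mu$ of $G$, and the non-tree edges of $\mu$ at other left vertices of $C$ supply further cycles, so an iterated sequence of swaps along these cycles strictly reduces the left-leaf count.

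For the connectedness of $M(G)$, I would first show $M(T)$ is connected and then propagate to $M(G)$. Suppose $M(T)=M_1\oplus M_2$ on $[n]=N_1\sqcup N_2$ with both $r_1,r_2\geq 1$. Partitioning $[d]$ into those whose $T$-neighbourhoods lie in $N_1$ only, $N_2$ only, or are mixed, the connectivity of $T$ together with both $N_s$ containing a non-loop forces the mixed class $D_0$ to be nonempty; pick $i^*\in D_0$ with $T$-neighbours $j_1^*\in N_1$, $j_2^*\in N_2$. The tree structure combined with the degree condition yields the counting inequality $|N_T(I)|\geq |I|+1$ for every nonempty $I\subseteq[d]$, from which I would exhibit a circuit of $M(T)$ containing both $j_1^*$ and $j_2^*$ — for instance, by inductively trimming right leaves of $T$ until $j_1^*$ and $j_2^*$ become the only $T$-neighbours of $i^*$, forcing $\{j_1^*,j_2^*\}$ to be a rank-$1$, size-$2$ circuit. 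Such a circuit would cross $N_1$ and $N_2$, contradicting the direct-sum decomposition. Finally, since every basis of $M(T)$ is also a basis of $M(G)$, any separator of $M(G)$ would force the same sizes on bases of $M(T)$ and thus separate it too; hence $M(G)$ is connected.

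The two main technical obstacles I expect are: in the spanning tree part, making the iterated swap in the bad case well-defined, since it is not immediately clear that the new cycles combine to give a strict reduction in leaves; and in the circuit construction for $M(T)$, ensuring that ``external'' left vertices adjacent to the chosen right vertices do not inflate the candidate set's rank above its size minus one. Both should be resolvable by careful case analysis and by inductive reduction to smaller trees.
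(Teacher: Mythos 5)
Your preliminary observation (every left vertex of $G$ has degree at least $2$) and your ``good case'' swap are fine, but the ``bad case'' you flag is not a technicality to be smoothed over --- it is essentially the whole content of the lemma, and your proposed fix does not close it. Note first that nothing in your spanning-tree argument uses $d<n$, yet the statement is false when $d=n$: for $G$ an even cycle $C_{2d}$ (connected, every edge in a perfect matching), every spanning tree is a Hamiltonian path and has a left leaf. So any completion of your sketch must bring in $d<n$, and your bad-case mechanism cannot supply it: the matching $\mu$ through $(i_0,j_1)$ may use only edges of the cycle $C$ at the left vertices of $C$ (exactly what happens in the even-cycle example), in which case there are no ``non-tree edges of $\mu$ at other left vertices of $C$'' and your iteration never starts, while a swap of $(i_0,j_1)$ against a $C$-edge only trades the leaf $i_0$ for a new left leaf. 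The paper's proof has to work much more globally: it grows a subgraph $G_k\supseteq T$ and tracks the set $V_k$ of vertices lying on cycles, maintaining the invariant that $V_k$ has equally many left and right vertices; since $d<n$ there is always an edge of $G_k$ leaving $V_k$, and for each such edge met at a right vertex one takes a \emph{new} matching containing it (not a single fixed $\mu$) to produce the next edge $f_k$; only when an edge leaves $V_k$ at a left vertex does a carefully cascading sequence of deletions produce a tree with strictly fewer left leaves. None of this structure (the counting invariant, the use of $d<n$, the fresh matchings, the terminal tree surgery) is present or replaceable by ``an iterated sequence of swaps along these cycles.''

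For the connectivity of $M(G)$, your reduction from $M(G)$ to $M(T)$ via common bases is sound (modulo first ruling out loops of $M(T)$, which your assumption $r_1,r_2\ge 1$ silently presupposes), but your proposed proof that $M(T)$ is connected does not work. Trimming right leaves is deletion of ground-set elements, and circuits of a deletion are circuits of $M(T)$ only if they avoid the deleted elements --- but the circuit through $j_1^*$ and $j_2^*$ may genuinely need the vertices you delete. Already for $T$ the path $a\,\mbox{--}\,1\,\mbox{--}\,b\,\mbox{--}\,2\,\mbox{--}\,c$ with $i^*=1$, $j_1^*=a$, $j_2^*=b$, the only circuit of $M(T)=U_{2,3}$ through $a$ and $b$ is $\{a,b,c\}$; after trimming $c$ the pair $\{a,b\}$ becomes a basis of the trimmed matroid, not a circuit, so no trimming ever produces the two-element circuit you want. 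The paper avoids circuits altogether with a basis-exchange argument you could adopt: given $j\in J$ and $j'\notin J$, orient $T$ away from $j$ and choose one out-edge at each left vertex (possible because there are no left leaves, and distinctness of the chosen right endpoints is automatic since each vertex has at most one in-edge), taking all out-edges along the $j$--$j'$ path; re-rooting at $j'$ flips exactly the path edges and yields a second basis differing from the first by swapping $j$ and $j'$, so no proper nonempty $J$ can be a separator.
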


\begin{proof}
Suppose not, and choose a spanning tree $T$ of~$G$ minimizing the 
number of leaves in the set of left vertices.  
We will argue for a contradiction by constructing another
spanning tree $T'$ with fewer left leaves.  

Let $i_0$ be a left vertex that is a leaf of~$T$,
and $j_0$ the right vertex it is adjacent to in~$T$.  Then $j_0$
is incident to at least one other edge of~$T$, and a matching in $G$
containing this edge also contains an edge $f_0$ incident to
$i_0$ other than $(i_0,j_0)$.  There is a single cycle $p_0$
in the graph $G_0 = T\cup\{f_0\}$, and clearly this cycle
includes equally many left and right vertices.

We now iteratively construct
a strictly increasing list of subgraphs $G_k$ of~$G$, $0\leq k\leq m$,
where $G_0$ is the graph described in the previous paragraph.
This list will be finite because the $G_k$ are strictly increasing;
its length will be determined by the construction process.
In fact, the $G_k$ will be unions of successive ears in an \emph{ear decomposition} of~$G$, 
of the sort which is guaranteed to exist by \cite[Theorem 4.1.6]{LovaszPlummer},
with $T$ dictating the choices of ears.

The $G_k$ will be constructed to have the property that,
if $V_k$ is the set of
vertices of $G_k$ contained in some cycle,  
then $V_k$ contains the same number of left and right vertices.  
Since $G_k$ has more
right than left vertices, it contains an edge between $V_k$
and $([n]\amalg[d])\setminus V_k$.
The iterative construction of the graphs $G_k$ 
will stop at the first graph $G_m$
such that there is an edge in $G_m$ having left vertex in 
$V_m$ and right vertex outside $V_m$.  

If there is no such edge for $G_k$, then all edges of~$G$ incident to $V_k$ in just one vertex
are incident to it in a right vertex, and there is at least one such edge.  
Choose a matching of $G$ containing such an edge; because $V_k$ has equally
many left and right vertices, this matching also contains an edge $f_{k+1}$
from a left vertex $i_{k+1}$ of~$V_k$ to a right vertex not in $V_k$.
Let $G_{k+1}$ equal $G_k\cup\{f_{k+1}\}$.  
There is a unique path in $G_k$ from the right endpoint of~$f_{k+1}$ 
to $V_k$; let $p_{k+1}$ be the union of $f_{k+1}$ and this path.
One endpoint of the path $p_{k+1}$ is the left vertex $i_{k+1}$, 
and the other endpoint is a right vertex of $V_k$ by assumption,
so therefore $p_{k+1}$ has equally many 
left as right vertices.
Thus also $V_{k+1} = V_k\cup p_{k+1} = \bigcup_{\ell\leq k+1} p_\ell$ 
contains equally many left as right vertices, as we have claimed.

This iteration finishes with a graph $G_m$
such that some edge of $G_m$ meets $V_m$ in a left vertex $i$
and meets $([n]\amalg[d])\setminus V_m$ in a right vertex $j$.
Observe that every vertex of~$V_m$ is a degree~2 vertex of
exactly one $p_k$ for $k\geq0$.
Let the sequence of indices $k_0,\ldots,k_s$ be defined so that
$i$ is a degree~2 vertex of $p_{k_0}$ and
$i_{k_\ell}$ is a degree~2 vertex of $p_{k_{\ell+1}}$ for all $\ell$
such that $k_\ell>0$; once $k_s=0$ occurs in the sequence,
it terminates.  Let $e_0$ be an edge of $p_{k_0}$ 
incident to $i$, and let $e_{\ell+1}$ be an edge of $p_{k_{\ell+1}}$
incident to $i_{k_\ell}$.
Finally, define
\begin{align*}
T' &= G_m\setminus\{e_0,\ldots,e_s\}\setminus\{f_m : m\neq k_\ell\mbox{ for any $\ell$}\}
\\&= T\cup\{f_{k_0},f_{k_1},\ldots,f_{k_s}=f_0\}\setminus\{e_0,\ldots,e_s\}.
\end{align*}
The graph $T'$ is a spanning tree for $G_m$, and thus for $G$. 
Indeed, its induced subgraph 
on any set $V_k$ is a spanning tree for $V_k$, by an easy induction on~$k$,
and $G_m\setminus V_m$ is a forest with one vertex of each component in~$V_m$,
so that $T'$ as a whole is a spanning tree for $G_m$.
% For similar reasons $T'$ is acyclic: any cycle in $G_k$ contains
% one of the paths $p_r$ by another induction, and $T'$ contains none of the $p_r$.
Moreover, $T'$ has one fewer left leaf than $T$ had:
by construction, with two exceptions, the degrees of the left vertices 
in $T'$ are equal to those in $T$, since each incident edge
added in the passage from $T$ to~$G_k$ is balanced by the
removal of an incident edge in the formation of $T'$ from $G_k$.
The exceptions are $i_0$, whose degree has been incremented from 1 to 2,
and $i$, whose degree has been decremented but remains at least 2.  
This is the contradiction sought, and therefore a spanning tree
with no left leaves exists.

Connectedness of $M(G)$ holds for the following reason.
If $M(G)$ is disconnected, then it is the direct sum of 
two matroids $M_1$ and $M_2$ on respective ground sets 
$J$ and $[n]\setminus J$ for some nonempty proper subset $J$ of~$[n]$,
which implies that $|B\cap J| = \rank M_1$ 
for every basis $B$ of~$M(G)$. 
We show that no such nonempty proper subset $J$ exists.  Given $J$,
choose some $j\in J$ and $j'\not\in J$.  Orient the edges
of $T$ away from $j$.  Choose one out-edge
from each left vertex, including all such edges
in the path from $j$ to~$j'$;
such out-edges exist since none of the left vertices
are leaves.  This gives a matching $\lambda$ on~$G$, and a basis $B$ of~$M(G)$.
Now orient the edges away from $j'$; the orientations remain the same
except for those on the path from $j$ to $j'$.  Accordingly
we get another matching $\lambda'$ on~$G$, with corresponding basis
$B\setminus\{j'\}\cup\{j\}$, and these two bases intersect $J$
in sets of different cardinality.
\end{proof}

\begin{corollary}\label{cor:spanning tree without left leaves}
Let $G$ be any bipartite graph on $[d]\amalg[n]$
such that every edge of $G$ is contained in a matching in~$G$.
Then $G$ has a spanning forest $F$
such that for every edge $e$ of $G$ not in~$F$
there exist matchings $\lambda, \lambda'$ on $G$ such that
$e$ is contained in $\lambda$, 
the remaining edges $\lambda\cup\lambda'\setminus\{e\}$ are contained in~$F$,
and the edges contained in just one of $\lambda$ and $\lambda'$
form a single cycle.
\end{corollary}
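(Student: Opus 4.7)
My plan is to reduce the corollary to Lemma~\ref{lem:spanning tree without left leaves} by working one connected component at a time.  Let $C_1,\dots,C_m$ be the connected components of $G$, with left and right parts $I_k=I_{C_k}$ and $J_k=J_{C_k}$.  Since every edge of $G$ lies in a matching, restricting a matching containing an edge of $C_k$ gives a partial matching of $G$ whose $C_k$-part matches $I_k$ entirely into $J_k$; in particular $|I_k|\le|J_k|$ and every edge of $C_k$ belongs to a ``local matching'' of $C_k$.

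When $|I_k|<|J_k|$, Lemma~\ref{lem:spanning tree without left leaves} supplies a spanning tree $T_k$ of $C_k$ whose left vertices all have degree $\ge 2$.  When $|I_k|=|J_k|$, I instead choose a perfect matching $\lambda_k$ of $C_k$ (guaranteed to exist by the previous remark) and extend it to a spanning tree $T_k$ of $C_k$.  Set $F=\bigsqcup_k T_k$.

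Given $e\in G\setminus F$ in component $C_k$, let $p$ be the unique cycle contained in $T_k\cup\{e\}$.  A short structural analysis is the core of the argument: because $(\lambda\cup\lambda')\setminus\{e\}\subseteq F$ and $\lambda\triangle\lambda'$ must be a single cycle, one checks that $\lambda\triangle\lambda'$ is forced to equal $p$; consequently $\lambda\cap p$ and $\lambda'\cap p$ are the two alternating perfect matchings of the even cycle $p$ (with $e\in\lambda\cap p$), while $\lambda$ and $\lambda'$ must agree outside $V_p$ on edges drawn entirely from $F$.  Building $\lambda$ and $\lambda'$ therefore reduces to producing a single partial matching $\mu\subseteq F$ from $[d]\setminus I_p$ into $[n]\setminus J_p$ that extends the two matchings of $p$ to matchings of $G$.

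To produce $\mu$, I treat two parts separately.  For each component $C_{k'}$ distinct from $C_k$, the tree $T_{k'}$ already contains a matching of $I_{k'}$ onto $J_{k'}$ — by construction in the balanced case, and, in the unbalanced case, by a Hall-type argument using the no-left-leaves property.  Within $C_k$, I need $T_k\setminus V_p$ to contain a matching of $I_k\setminus I_p$ into $J_k\setminus J_p$, which again reduces to a Hall condition.  The main obstacle is precisely this last Hall condition: one must rule out the possibility that some vertex $i\in I_k\setminus I_p$ has all of its $T_k$-neighbours inside $J_p$.  This is the step that genuinely uses the detailed structure of Lemma~\ref{lem:spanning tree without left leaves} (and, in the balanced case, the careful extension of $\lambda_k$), so I would handle it by revisiting the iterative constructions in the lemma's proof and showing that the cycles produced there interact with $V_p$ in a controlled enough way to guarantee the required neighbours outside $J_p$.
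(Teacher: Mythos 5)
Your construction of $F$ is the same as the paper's (the lemma's no-left-leaf tree on unbalanced components, a perfect matching extended to a spanning tree on balanced ones, and reduction to the other components by arbitrary partial matchings), and your structural reduction is correct: $\lambda\triangle\lambda'$ is forced to be the fundamental cycle $p$ of $e$, so everything comes down to producing one partial matching $\mu\subseteq F$ of the off-cycle left vertices into off-cycle right vertices. But that is precisely the step you do not prove: you name it as ``the main obstacle'' and propose to revisit the iterative construction of the cycles $p_\ell$ inside the proof of Lemma~\ref{lem:spanning tree without left leaves}, which is a plan rather than an argument, and it is also not the idea that is needed. The \emph{statement} of the lemma already suffices, via a short orientation trick (this is how the paper closes the gap): in $F\cup\{e\}$ direct every edge not on $p$ away from the cycle $p$; then each vertex has at most one in-edge, so every left vertex off $p$, having degree at least $2$ in the tree, has an out-edge, these out-edges go to right vertices off $p$ (an edge joining $p$ to its complement is directed away from $p$), and no two of them share a right endpoint. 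Choosing one out-edge per off-cycle left vertex is exactly the required $\mu$; no analysis of the lemma's internal cycles is needed. Note also that your Hall reduction, ``rule out that some $i$ has all its $T_k$-neighbours inside $J_p$,'' is only the singleton case of Hall's condition, whereas the orientation argument disposes of all subsets at once.

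In the balanced components your parenthetical ``careful extension of $\lambda_k$'' is carrying real weight that is never discharged: an arbitrary extension of a perfect matching to a spanning tree does not make fundamental cycles alternating, and then no admissible $\lambda$ exists at all. For instance in $K_{3,3}$ with perfect matching $\{(1,a),(2,b),(3,c)\}$ and spanning tree $\{(1,a),(2,b),(3,c),(1,b),(1,c)\}$, the non-tree edge $e=(2,c)$ has fundamental cycle $\{(2,c),(2,b),(1,b),(1,c)\}$, which is not alternating, and indeed no matching through $e$ can be completed inside the tree, since the only tree edge at the vertex $3$ is $(3,c)$. (The published proof is equally terse at this point, so the issue is not peculiar to you, but a complete argument must either choose the balanced-case tree so that every fundamental cycle is alternating, or verify the off-cycle matching directly for the chosen tree.) So: right strategy and correct reduction, but the core of the proof --- the existence of $\mu$ in the unbalanced case and the correct tree choice in the balanced case --- is missing.
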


\begin{proof}
If $d<n$ and $G$ is connected, 
then let $F$ be the tree $T$ provided by Lemma~\ref{lem:spanning tree without left leaves}.
Since $F$ is a spanning tree, the graph $F\cup\{e\}$ contains
exactly one cycle $C$.  Orient the edges of $(F\cup\{e\})\setminus C$
so that they are directed away from~$C$.
Then every left vertex of $G$ not contained in $C$ has
positive outdegree, since none of these vertices are leaves.  
Then we take $\lambda$ to consist
of one out-edge from each of these
vertices, together with alternate edges of $C$ including~$e$,
and $\lambda'$ to consist of the same edges 
from $F\setminus C$ together with the other set of 
alternate edges of $C$, those not including~$e$.
The edges contained in just one of $\lambda$ and $\lambda'$ form~$C$.

If $d=n$ and $G$ is connected, then choose any matching $\lambda'$ on~$G$
and extend it to a spanning tree $F$.  For any edge $e$
of $G\setminus F$, let $\lambda$ consist of the symmetric difference
of $\lambda'$ and the unique cycle of $F\cup\{e\}$.

Finally, if $G$ is disconnected, let $F$ be the union of the spanning
trees for its components provided in the previous paragraphs.
Any partial matching on a component of~$G$ can be extended
to a matching on~$G$ by choosing arbitrary partial matchings
on the other components, and the result follows.
\end{proof}

\begin{proposition}[{\cite[Conjecture 3.8]{SZ}}]\label{prop:dim of support face}
For $d<n$, the dimension of each support face $\Pi_{d,n}(\Sigma)$ equals $(d-1)(n-d-1)$.  
\end{proposition}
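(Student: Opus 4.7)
The plan is to pin down $\dim\Pi_{d,n}(\Sigma)$ from above and from below by the value $(d-1)(n-d-1)$, by identifying the affine span of $\Pi_{d,n}(\Sigma)$ explicitly.

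For the upper bound, introduce the affine subspace $V\subseteq\R^{d\times n}$ defined by $x_{ij}=0$ for $(i,j)\notin\Sigma$, $\sum_j x_{ij}=\binom{n}{d}$ for each $i\in[d]$, and $\sum_i x_{ij}=\binom{n-1}{d-1}$ for each $j\in[n]$. Each vertex of $\Pi_{d,n}(\Sigma)$ has the form $\sum_J\lambda_J$ for a matching field supported on $\Sigma$, and a direct count shows it lies in $V$, so $\Pi_{d,n}(\Sigma)\subseteq V$. To compute $\dim V$, recall from Theorem~\ref{r:SZ3.6} that $|\Sigma|=d(n-d+1)$. The row- and column-sum functionals restricted to $\R^{\Sigma}$ are the columns of the unsigned vertex--edge incidence matrix of the bipartite graph $\Sigma$; since $\Sigma$ is connected (else $M(\Sigma)$ would decompose as a direct sum, contradicting connectedness of $U_{d,n}$ for $d<n$), this matrix has rank $|V(\Sigma)|-1=d+n-1$. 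Hence $\dim V=d(n-d+1)-(d+n-1)=(d-1)(n-d-1)$.

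For the lower bound, the aim is to show that the tangent space of $\Pi_{d,n}(\Sigma)$ is all of the direction space $W=\{M\in\R^{\Sigma}:\text{zero row and column sums}\}$ of $V$. Cycles in the bipartite graph $\Sigma$ yield elements of $W$ via the alternating sign vectors $\chi_C\in\R^{\Sigma}$, and relative to any spanning tree $F$ of $\Sigma$ the fundamental cycles $\chi_{C_e}$ ($e\in\Sigma\setminus F$) are linearly independent; there are $|\Sigma|-(d+n-1)=(d-1)(n-d-1)=\dim W$ of them, so they form a basis of $W$. It thus suffices to express each $\chi_{C_e}$ as a difference of two points in $\Pi_{d,n}(\Sigma)$.

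The last step invokes Corollary~\ref{cor:spanning tree without left leaves}---applicable because the minimality of $\Sigma$ as a graph with $M(\Sigma)=U_{d,n}$ places every edge of $\Sigma$ in a matching on $\Sigma$---to obtain a spanning tree $F$ together with, for each $e\in\Sigma\setminus F$, matchings $\lambda,\lambda'$ on $\Sigma$ whose symmetric difference is exactly the fundamental cycle $C_e$. A right vertex $j$ on $C_e$ is incident to one $\lambda$-edge and one $\lambda'$-edge of $C_e$, while a right vertex off $C_e$ has identical incident edges in the two matchings; so $\lambda$ and $\lambda'$ match $[d]$ to the same $d$-set $J\in\binom{[n]}{d}$. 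Now fix any matching field $(\mu_{J'})_{J'}$ supported on $\Sigma$ (such a field exists because $\Sigma$ is a support set) and build two tuples from it by replacing $\mu_J$ with $\lambda$ or with $\lambda'$, respectively. The main obstacle is that these replacement tuples are in general \emph{not} coherent, so their sums may fail to be vertices of $\Pi_{d,n}$; this is bypassed via the Minkowski-sum decomposition $\Pi_{d,n}(\Sigma)=\sum_J\face_{A_0}\Pi_{d,J}$ (with $A_0$ the support-selecting functional, $0$ on $\Sigma$ and $1$ elsewhere), which guarantees that \emph{any} sum of matchings on the $J$'s supported in $\Sigma$ lies in $\Pi_{d,n}(\Sigma)$. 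The two resulting sums differ by $\lambda-\lambda'=\pm\chi_{C_e}$, placing $\chi_{C_e}$ in the tangent space and completing the argument.
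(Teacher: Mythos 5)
Your proposal is correct and follows essentially the same route as the paper's proof: you identify the translated affine span of $\Pi_{d,n}(\Sigma)$ with the cycle space of the connected graph $\Sigma$ (your subspace $V$ cut out by row/column sums is just the affine-hull reformulation of the paper's $H_1(\Sigma)$, with the same count $|\Sigma|-(d+n-1)=(d-1)(n-d-1)$), and you obtain the reverse inclusion exactly as the paper does, via Corollary~\ref{cor:spanning tree without left leaves} and the Minkowski-sum decomposition $\Pi_{d,n}(\Sigma)=\sum_J\Pi_{d,J}(\Sigma)$ to realize each fundamental cycle as a difference of points of the face. The only cosmetic differences are your use of connectedness of $U_{d,n}$ (rather than Proposition~\ref{r:SZ3.1}(c)) to see that $\Sigma$ is connected, and writing two explicit points of $\Pi_{d,n}(\Sigma)$ instead of directly taking a difference of vertices of a summand; both are fine.
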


\begin{proof}
We claim that the affine span of~$\Pi_{d,n}(\Sigma)$, translated to the origin, 
equals the space spanned by the simplicial 1-cycles of $\Sigma$
orienting all its edges from left to right.  
Since $\Sigma$ is a 1-dimensional complex, this space is $H_1(\Sigma)$,
and its dimension is $h_1(\Sigma)$, the first Betti number.
The graph $\Sigma$ is connected (since if some proper induced subgraph
on vertices $I\subseteq[d]$ and $J\subseteq[n]$ were a connected component,
then at least one of the sets $I$ and $[d]\setminus I$ would violate 
Proposition~\ref{r:SZ3.1}(c)), so $h_1(\Sigma)$ can be computed
from the number of its vertices, which is $d+n$, 
and its edges, which is $d(d-n+1)$ by Theorem~\ref{r:SZ3.6}:
\[h_1(\Sigma)=1-(d+n)+d(d-n+1)=(d-1)(n-d-1).\]

To verify our claim,
let $L$ be the affine span of $\Pi_{d,n}(\Sigma)$.
We have that $\Pi_{d,n}(\Sigma)$ is the face of $\Pi_{d,n}$
obtained by successively minimizing the functionals $x_{i,j}$
for all $(i,j)\not\in\Sigma$.  Minimizing a functional distributes 
across Minkowski sum, so $\Pi_{d,n}(\Sigma)$
is the Minkowski sum over all $J\in\binom{[n]}d$
of the face $\Pi_{d,J}(\Sigma)$
of the Birkhoff polytope $\Pi_{d,J}$ which
minimizes these same functionals.  
This is the face of all points of $\Pi_{d,J}$ whose support is contained in~$\Sigma$. 
The vertices of $\Pi_{d,J}(\Sigma)$
are the matchings on column set $J$ supported on $\Sigma$, and a difference
of the zero-one matrices of two such matchings 
has vanishing boundary, so lies in $H_1(\Sigma)$.
That is, $L\subseteq H_1(\Sigma)$.

Conversely, we apply Corollary~\ref{cor:spanning tree without left leaves}
with $G=\Sigma$.  This yields a spanning tree $T$ of $\Sigma$;
by contracting $T$ to a point we see that
the space $H_1(\Sigma)$ is spanned by the set, 
as $e$ ranges over the edges of $\Sigma$ not in $T$,
of the unique cycles $C$ supported on $T\cup\{e\}$.
For each such edge $e$, the matchings $\lambda$ and $\lambda'$ 
produced by the lemma are on the same column set contained in $\Sigma$,
that is, they are both vertices of one of the polytopes $\Pi_{d,J}(\Sigma)$ above.
As simplicial chains, their difference is the cycle $C$, so that $C\in L$.
This proves $H_1(\Sigma)\subseteq L$.
\end{proof}

Observe that simplicial 1-cycles of the graph $\Sigma$, with the sign choices
given by our orientation, are exactly the signed incidence matrices 
appearing in the discussion preceding Proposition~1.9 of~\cite{SZ}.

\section{Tropical background}\label{sec:tropical}
In this section we introduce material on tropical geometry.
There is at present no canonical general reference for tropical geometry and tropical combinatorics,
but an attempt to rectify this lack is being made by the books \cite{MSbook, J} in preparation.

The tropical semiring is $\mathbb T = (\Rinf, \oplus, \odot)$
where $\Rinf$ is $\R\cup\{\infty\}$, addition $\oplus$ is minimum
(so that $\infty$ is the additive identity),
and multiplication $\odot$ is usual addition.  
Matrix multiplication is defined over a semiring as expected.
The {\em support} of a tropical vector or matrix
is the set of indices of components of that object
which are not equal to $\infty$, that is, which are contained in~$\R$.  

Tropical projective space is $\TP^{k-1}=(\Rinf^k\setminus\{(\infty,\ldots,\infty)\})/\bb R \cdot (1,\ldots,1)$.
The set of points with finite coordinates in tropical projective space,
$\bb R^k/\bb R \cdot (1,\ldots,1)$, is the tropicalization of the big torus,
so we may call it the tropical torus $\TT^{k-1}$.

\subsection{Tropical Grassmannians and linear spaces}
Given an algebraically closed valued field $\bbK$,
let $\bbK^{d\times n}$ denote the variety of $d\times n$ matrices over~$\bbK$. 
Let $\bbK^{d\times n}_{\rm fr}$ denote 
the subvariety of matrices of \underline{f}ull \underline{r}ank, namely rank~$d$.  
The Grassmannian $\cl\Gr(d,n)$ parametrizes $d$-dimensional subspaces of $\bbK^n$, 
and there is a natural map $\cl\pi:\bbK^{d\times n}_{\rm fr}\to\cl\Gr(d,n)$
such that $\cl\pi(\cl A)$ is the space spanned by the rows of~$\cl A$.
We will call this the {\em Stiefel map}.  
The use of Stiefel's name for the map $\cl\pi$ is apparently not usual,
but \cite{GKZ} dubs its domain $\bbK^{d\times n}_{\rm fr}$ the Stiefel variety, 
and the coordinates it provides on~$\cl\Gr(d,n)$ the Stiefel coordinates.

The tropical Grassmannian $\Gr(d,n) \subseteq \TP^{\binom nd-1}$ is the tropicalization of $\cl\Gr(d,n)$ 
via its Pl\"ucker embedding $\cl\iota:\cl\Gr(d,n)\to\bb P^{\binom nd-1}$,
which can be described by saying that 
$\cl\iota \circ \cl\pi$ sends a matrix to its vector of maximal minors.
Any vector $p \in \Gr(d,n)$ satisfies the tropical Pl\"ucker relations:
For any $S, T \subseteq [n]$ such that $|S|=d-1$ and $|T|=d+1$, the minimum
\[
\min_{i \in T \setminus S} (p_{S \cup i} + p_{T - i}) \text{ is achieved at least twice}.
\]
Any vector satisfying these tropical Pl\"ucker relations is called a {\em tropical Pl\"ucker vector} or a {\em valuated matroid} \cite{DW}; we will use the former name here.

\begin{definition}
Let $\tmcm$ be the set of tropical matrices in $\Rinf^{d\times n}$ whose 
support contains a matching.
The {\em tropical Stiefel map} is the map 
$\pi:\tmcm \to\Gr(d,n)$
such that $\pi(A)_J$ is the $([d], J)$ tropical minor of~$A$, that is, if $A = (a_{ij})$ then
\[
\pi(A)_J = \min \Bigg\{ \sum_{(i,j) \in \lambda} a_{ij} : \lambda \text{ is a matching from $[d]$ to } J \Bigg\}.
\]
\end{definition}

Note that the tropical Stiefel map indeed maps matrices in $\tmcm$ to the tropical Grassmannian $\Gr(d,n)$: a sufficiently generic lift of the matrix $A \in \tmcm$ to a matrix $\cl A \in \bbK^{d\times n}_{\rm fr}$ satisfies $\val( \cl\iota \circ \cl \pi (\cl A) ) = \pi(A)$.

Unlike the classical situation, the image of the tropical Stiefel map
is not the whole tropical Grassmannian.  Example~\ref{ex:caterpillar}
describes the simplest case in which they diverge.
\begin{definition}
We call the image in $\Gr(d,n)$ of the tropical Stiefel map the
{\em Stiefel image}, and we denote it by $\SI(d,n)$.
\end{definition}

The tropical Grassmannian $\Gr(d,n)\subseteq\TP^{\binom{[n]}d-1}$
is a polyhedral fan of dimension $d(n-d)$, the same dimension as the
classical Grassmannian $\cl \Gr(d,n)$ 
(as we know in general by the Bieri-Groves theorem \cite{BG}).
We will see in the sequel that $\dim\SI(d,n) = \dim\Gr(d,n) = d(n-d)$.

The torus $(\bbK^\ast)^n$ 
acts on~$\bbK^{d\times n}$ on the right as the diagonal torus in $\mathrm{GL}_n$,
i.e.\ by scaling columns of matrices. This action naturally induces an action of $(\bbK^\ast)^n/\bbK^\ast$
on $\cl \Gr(d,n)$.
The orbits of $(\bbK^\ast)^n/\bbK^\ast$ tropicalize to yield an $(n-1)$-dimensional
lineality space $V$ in $\Gr(d,n)$.
Note that $V$ is also the lineality space of~$\SI(d,n)$.

The tropical Grassmannian $\Gr(d,n)$ is in fact a parameter space
for tropicalized linear spaces~\cite{SS}.
In the classical situation, the linear space associated to a point $\cl p\in\bbK^{\binom{[n]}{n-d}}$
on the Grassmannian $\cl \Gr(d,n)$ is
\begin{equation*}
\cl L(\cl p) = \bigcap_{j_1<\cdots<j_{d+1}\in[n]}
\left\{ \cl y \in \bbK^n : \sum_{k=1}^{d+1} (-1)^k \,
 \cl p_{j_1\cdots\widehat{j_k}\cdots j_{d+1}} \cdot \cl y_{j_k} = 0\right\}.
\end{equation*}
The same holds if we tropicalize all varieties involved~\cite{TLS}, that is,
the tropical linear space with tropical Pl\"ucker vector $p \in \Gr(d,n)$ is
\begin{equation}\label{eq:L(w)}
 L(p) = \bigcap_{J\in \binom{[n]}{d+1}}
\left\{ y \in \Rinf^n : \min_{j \in J} (p_{J - j} + y_j) \text{ is achieved at least twice}\right\}.
\end{equation}
We let $L(A)$ abbreviate $L(\pi(A))$.  This $L(A)$ is the valuated matroid
treated in Example 5.2.3 of~\cite{Murota}.
In the case when all $p_J$ are either $0$ or $\infty$, the space $L(p)$ is also called
the \emph{Bergman fan} of $p$ \cite{AK}.

\begin{definition}
If $L$ is a tropical linear space of the form $L = L(A)$ for some tropical matrix $A$, 
we call $L$ a {\em Stiefel tropical linear space}.
\end{definition}

We say that two tropical linear spaces have the same {\em combinatorial type}
if there is an isomorphism between their posets of faces
sending each face to a face with parallel affine span.
A different description of the combinatorial type of a tropical linear space arises in the context of
matroid polytope subdivisions \cite{murotafamily, TLS, R}, as we explain below.

Let $p \in \Gr(d,n)$, and let $\mc D(p)$ be the regular subdivision of the polytope
$\Gamma_p = \conv \{ e_J : p_J \neq \infty \} \subseteq \R^n$ obtained by projecting back to $\R^n$
the lower faces of the ``lifted polytope'' $\widehat{\Gamma}_p \subseteq \R^{n+1}$ gotten by lifting the vertex $e_J$ to height $p_J$.
The fact that $p$ satisfies the tropical Pl\"ucker relations implies (and is equivalent, in fact)
that the subdivision $\mc D(p)$ is a {\em matroid subdivision}, that is, it is a
subdivision of a matroid polytope into matroid polytopes.
In particular, the collection of subsets $\{ J \in \binom{[n]}{d} : p_J \neq \infty \}$ is the collection
of bases of a rank $d$ matroid over $[n]$, called the {\em underlying matroid} of $p$ (or of $L(p)$).

The part of the tropical linear space $L(p)$ living inside $\R^n$ is a polyhedral complex
dual to the subcomplex of the subdivision $\mc D(p)$
consisting of all those faces of $\mc D(p)$
which are not contained in $\{x_j=0\}$ for any~$j$.
More specifically, for any vector $y \in \R^n$, consider the matroid $M_y$
whose bases are the subsets $J \in \binom{[n]}{d}$ for which
$p_J - \sum_{j \in J} y_j$ is minimal. In other words, the matroid 
polytope of $M_y$ is the projection of the face of $\widehat{\Gamma}_p$
minimized by the functional $(-y, 1)$. We will say that $M_y$ is the matroid
in $\mc D(p)$ {\em selected} by $y$.
It was proved in \cite{TLS, LTLS} that $y \in L(p)$
if and only if $M_y$ is the matroid polytope of a {\em loopless} matroid.
In this perspective, two tropical linear spaces $L(p)$ and $L(p')$ have the same combinatorial type
if and only if their associated matroid subdivisions $\mc D(p)$ and $\mc D(p')$ are equal.

In \cite{TLS}, Speyer described a few operations one can perform on tropical Pl\"ucker vectors
and their corresponding tropical linear spaces. If $p \in \Rinf^{\binom{[n]}{d}}$ is a tropical Pl\"ucker vector,
its {\em dual} $p^* \in \Rinf^{\binom{[n]}{n-d}}$ is defined by 
\[p^*_S = p_{[n] \setminus S}.\]
The {\em stable intersection} of two tropical Pl\"ucker vectors $p \in \Rinf^{\binom{[n]}{d}}$
and $q \in \Rinf^{\binom{[n]}{e}}$ such that $d + e \geq n$ is the tropical Pl\"ucker vector $r \in \Rinf^{\binom{[n]}{d+e-n}}$ given by
\[
r_T = \min_{R \cap S = T} p_R + q_S.
\]
Its corresponding tropical linear space is the stable intersection of the tropical linear spaces associated to $p$ and $q$.

We can dualize this notion as follows: the {\em stable union} of two tropical Pl\"ucker vectors
$p \in \Rinf^{\binom{[n]}{d}}$ and $q \in \Rinf^{\binom{[n]}{e}}$ such that $d + e \leq n$
is the tropical Pl\"ucker vector $r^* \in \Rinf^{\binom{[n]}{d+e}}$ given by 
\[r^*_T = \min_{R \cup S = T} p_R + q_S.\]
Under this terminology, the tropical Stiefel map assigns to a tropical matrix $A$
the tropical Pl\"ucker vector obtained as the stable intersection of its row vectors.
It follows that Stiefel tropical linear spaces are precisely the tropical linear spaces that can
be obtained as the stable union of $d$ points in $\Rinf^n$, as stated in the following proposition.

\begin{proposition}
A tropical linear space $L$ is in the Stiefel image if and only if its dual $L^*$ is a stable intersection of tropical hyperplanes.
\end{proposition}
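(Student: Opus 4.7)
The plan is to deduce this directly from the remark just preceding the proposition, that any Stiefel tropical linear space is obtained as the stable union of $d$ tropical points (the rows of $A$ viewed as rank-$1$ tropical Pl\"ucker vectors in $\Rinf^n$). The proposition then follows by passing through the duality $p \mapsto p^*$, which exchanges stable union with stable intersection and converts rank-$1$ Pl\"ucker vectors (points) into rank-$(n-1)$ Pl\"ucker vectors (hyperplanes).

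First I would verify the identity $(p \boxplus q)^* = p^* \wedge q^*$ of Pl\"ucker vectors, where $\boxplus$ and $\wedge$ denote stable union and stable intersection respectively. This is immediate from the substitution $R' := [n] \setminus R$, $S' := [n] \setminus S$, which converts the condition $R \cup S = [n] \setminus T$ into $R' \cap S' = T$:
\[
(p \boxplus q)^*_T \;=\; \min_{R \cup S = [n] \setminus T}\bigl(p_R + q_S\bigr) \;=\; \min_{R' \cap S' = T}\bigl(p^*_{R'} + q^*_{S'}\bigr) \;=\; (p^* \wedge q^*)_T.
\]
Second, I would unfold~(\ref{eq:L(w)}) for a dual point $v^* \in \Rinf^{\binom{[n]}{n-1}}$: there is a single relation, indexed by $J=[n]$, which reads ``$\min_{j\in[n]}(v_j + y_j)$ is achieved at least twice'', exhibiting $L(v^*)$ as the tropical hyperplane of coefficient vector $v$. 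I would also recall from the definitions just above that the tropical linear space associated to a stable intersection of Pl\"ucker vectors is the stable intersection of the associated tropical linear spaces.

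Assembling these three facts: if $L = L(A)$ with $A \in \tmcm$ and rows $A_1, \ldots, A_d$, then the previous paragraph gives $\pi(A) = A_1 \boxplus \cdots \boxplus A_d$, so $\pi(A)^* = A_1^* \wedge \cdots \wedge A_d^*$, and hence $L^*$ is the stable intersection of the $d$ tropical hyperplanes determined by the rows of $A$. Conversely, any presentation $L^* = H_1 \wedge \cdots \wedge H_k$ as a stable intersection of tropical hyperplanes (with coefficient vectors $v^{(i)} \in \Rinf^n$) assembles into a matrix $A$ with rows $v^{(i)}$, and running the chain backwards yields $L = L(A)$; dimension counting forces $k = n - \mathrm{rank}(L^*) = d$, so $A$ has the correct shape. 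I do not anticipate any substantive obstacle beyond the bookkeeping of these parallel correspondences.
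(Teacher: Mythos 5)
Your proposal is correct and follows essentially the same route as the paper, which presents this proposition as an immediate consequence of the preceding discussion: $\pi(A)$ is the stable union of the rows of $A$ viewed as points, and dualization exchanges stable union of points with stable intersection of the corresponding hyperplanes. You merely write out the bookkeeping (the identity $(p\boxplus q)^*=p^*\wedge q^*$, the identification of $L(v^*)$ with a tropical hyperplane, and the count $k=d$) that the paper leaves implicit.
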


The operation of stable union of tropical Pl\"ucker vectors is the same as 
the matroid union operation on valuated matroids with the same ground set,
featured in Theorem 5.2.20 of~\cite{Murota}, in the case that rank is additive.

In particular, any Stiefel tropical linear space is \emph{constructible}, 
in the sense discussed in \cite{TLS}.  We expect that constructible tropical
linear spaces are exactly the valuated matroids of Example 5.2.4 of~\cite{Murota}.

\subsection{The tropical meaning of support sets}
From a tropical perspective, 
the culmination of this section is Corollary~\ref{cor:Stiefel image} below,
which shows that the Stiefel image 
is covered by certain polyhedral complexes homeomorphic to real vector spaces.  
In fact, these homeomorphisms are provided, essentially, by restrictions
of the tropical Stiefel map $\pi$ to tropical coordinate subspaces, where
collections of the variables in $A$ are fixed to be $\infty$.
The following definition captures the restrictions of $\pi$ that we use.
\begin{definition}
For a subset $\Sigma\subseteq[d]\times[n]$,
let $\pi_\Sigma$ be the restriction of $\pi$ to the matrices supported on~$\Sigma$.
We will say that $\pi_\Sigma$ is a \emph{supportive restriction} of $\pi$
if its image is a subset of the tropical torus $\TT^{\binom{[n]}{d}-1}$ having dimension $d(n-d)$ and its fibers
are single orbits of the left diagonal $\R^d$ action on tropical matrices
(which acts by adding constants to rows). 
\end{definition}

Note that $d(n-d)$ is the full dimension of the Stiefel image, and
that the fibers of a supportive restriction will in fact be 
isomorphic to $\R^{d-1}$, since this tropical torus $\R^d$ 
induces a faithful action of $\R$ on $\Gr(d,n)$
via the tropical character $x_1+\cdots+x_d$.

\begin{theorem}\label{r:L inj}
The set $\Sigma$ is a support set if and only if $\pi_\Sigma$ is a supportive restriction.
\end{theorem}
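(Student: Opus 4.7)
The claim splits into two directions; I would dispose of the converse first as it falls out of the setup. If $\pi_\Sigma$ is a supportive restriction, then the image lying in $\TT^{\binom{[n]}{d}-1}$ forces every tropical maximal minor of a matrix in $\R^\Sigma$ to be finite, which requires every $J\in\binom{[n]}{d}$ to admit a matching in $\Sigma$; by Proposition~\ref{r:SZ3.1} this is equivalent to $M(\Sigma)=U_{d,n}$. The dimension count $|\Sigma| = \dim(\mathrm{image}) + \dim(\mathrm{orbit}) = d(n-d)+d = d(n-d+1)$ then combines with $M(\Sigma)=U_{d,n}$ to identify $\Sigma$ as a support set via Theorem~\ref{r:SZ3.6}.

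For the forward direction, containment in $\TT^{\binom{[n]}{d}-1}$ is the same finiteness remark, and the dimension claim is forced by the fiber claim. The substance is therefore to show that for a support set $\Sigma$, two matrices $A,A'\in\R^\Sigma$ with $\pi(A)=\pi(A')$ in the tropical torus must differ by a row shift. I expect this injectivity to be the main obstacle: because $\pi_\Sigma$ is merely piecewise linear, a single-cell rank computation is not enough---one has to rule out extra identifications that could appear as cells of different matching fields are glued together.

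My plan is to prove injectivity by explicitly recovering entries of $A$ from its tropical Pl\"ucker coordinates. For each $(i,j)\in\Sigma$ consider the \emph{forcing subset} $J_{i,j} := ([n]\setminus J_i(\Sigma))\cup\{j\}$, which has cardinality $(n-(n-d+1))+1 = d$. Since no vertex of $[n]\setminus J_i(\Sigma)$ is a neighbor of $i$ in $\Sigma$, any matching from $[d]$ to $J_{i,j}$ inside $\Sigma$ is forced to match row $i$ to column $j$, and in particular uses the edge $(i,j)$. That such a matching exists at all reduces by Hall's theorem to verifying $|J_I(\Sigma)\setminus J_i(\Sigma)|\ge|I|$ for every $I\subseteq[d]\setminus\{i\}$, which follows by applying condition~(c) of Proposition~\ref{r:SZ3.1} to $I\cup\{i\}$ and using the support-set equality $|J_i(\Sigma)|=n-d+1$ to absorb exactly the required slack.

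With the forcing subsets in hand one has the identity
\[ \pi(A)_{J_{i,j}} \;=\; a_{ij} + C_i(A), \]
where $C_i(A)$ is the tropical permanent taken over matchings from $[d]\setminus\{i\}$ to $[n]\setminus J_i(\Sigma)$ in $\Sigma$. Crucially $C_i(A)$ depends on $i$ but not on $j$, so after normalising $A'$ by a row shift so that $\pi(A)=\pi(A')$ holds as honest vectors in $\R^{\binom{[n]}{d}}$, subtracting the two identities yields $a'_{ij}-a_{ij} = C_i(A)-C_i(A')$ for every $j\in J_i(\Sigma)$. Thus $A'-A$ is constant along each row, hence a row shift, which proves injectivity modulo the $\R^d$-action and completes the forward implication.
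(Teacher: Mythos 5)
Your argument is correct, and while your converse direction coincides with the paper's (finiteness of all maximal minors gives condition (d) of Proposition~\ref{r:SZ3.1}, and the dimension count $|\Sigma| = d + d(n-d) = d(n-d+1)$ feeds into Theorem~\ref{r:SZ3.6}), your forward direction takes a genuinely different route. The paper proves injectivity modulo row shifts by passing to the tropical linear space $L(A)$: since $\Sigma$ is a support set the underlying matroid is $U_{d,n}$, the rows of $A$ are cocircuits of $L(A)$ with supports $J_i(\Sigma)$, and the quoted result of \cite{MT} (cocircuits are determined up to a multiple of $(1,\dotsc,1)$ by their supports) recovers the rows from $L(A)$. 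You instead recover each entry directly from a single Pl\"ucker coordinate: for $(i,j)\in\Sigma$ the column set $J_{i,j}=([n]\setminus J_i(\Sigma))\cup\{j\}$ forces every $\Sigma$-matching on it to use the edge $(i,j)$, the Hall verification via condition (c) of Proposition~\ref{r:SZ3.1} applied to $I\cup\{i\}$ together with $|J_i(\Sigma)|=n-d+1$ guarantees such a matching exists, and the resulting identity $\pi(A)_{J_{i,j}} = a_{ij} + C_i(A)$ with $C_i$ independent of $j$ yields that two matrices with the same Pl\"ucker vector differ by a row shift. Your version is more elementary and self-contained---it needs only Hall's theorem and the definition of $\pi$, with an explicit reconstruction formula, and it sidesteps $L(A)$ and the cocircuit machinery entirely---whereas the paper's proof is shorter given that machinery and conceptually identifies the rows of $A$ as cocircuits (tropical-convexity vertices) of $L(A)$, a viewpoint it reuses elsewhere; in fact your identity is essentially an explicit coordinate form of that cocircuit statement. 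Your glosses on the remaining points (containment of the image in the tropical torus, and the image dimension being forced by the $d$-dimensional fibers) are at the same level of detail as the paper's own treatment and are harmless.
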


For the proof of Theorem \ref{r:L inj} we need the following notion.
A {\em cocircuit} of a tropical linear space $L$ in $\Rinf^n$ is a vector 
$c \in L$ of minimal support $\supp(c) = \{ j \in [n] : c_j \neq \infty \}$.
The following proposition shows that there is a bijection between cocircuits of a tropical linear space and cocircuits of its underlying matroid.

\begin{proposition}[\cite{MT}]
Let $L$ be a tropical linear space. The support of any cocircuit $c$ of $L$ is a cocircuit of the underlying matroid of $L$.
Moreover, if two cocircuits of $L$ have the same support then they differ by a scalar multiple 
of the vector $(1,\dotsc,1) \in \bb R^n$, that is, they are equal in $\TP^{n-1}$.
\end{proposition}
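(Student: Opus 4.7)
The plan is to establish the two assertions in turn, in each case by applying the defining tropical relation (\ref{eq:L(w)}) of $L=L(p)$ at a carefully chosen $(d+1)$-subset $J$, so that only a controlled number of terms in $\min_{j\in J}(p_{J-j}+c_j)$ are finite. Write $M$ for the underlying matroid of $L$, whose bases are $\{J:p_J\neq\infty\}$, and recall that a cocircuit of $M$ is the complement of a hyperplane of $M$.

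For the first assertion, let $c$ be a cocircuit of $L$, set $F=[n]\setminus\supp(c)$, and first show $\rank_M(F)\leq d-1$. Otherwise $F$ would contain a basis $G$ of $M$; picking any $j^*\in\supp(c)$ and applying (\ref{eq:L(w)}) at $J=G\cup\{j^*\}$, all terms with $j\in G$ equal $\infty$ (since $c_j=\infty$), so the unique remaining term $p_G+c_{j^*}$ must also equal $\infty$, forcing $p_G=\infty$, a contradiction. Thus $F\subseteq H$ for some hyperplane $H$ of $M$, and $C:=[n]\setminus H\subseteq\supp(c)$. To establish the reverse inclusion I would exhibit an element $c^S\in L$ with $\supp(c^S)=C$ and invoke the minimality of $\supp(c)$. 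Fix a maximal independent subset $S$ of $H$, so $|S|=d-1$ and $S\cup\{j\}$ is a basis of $M$ iff $j\in C$. Define $c^S_j=p_{S\cup j}$ for $j\notin S$ and $c^S_j=\infty$ for $j\in S$, so $\supp(c^S)=C$. To verify $c^S\in L$, take any $J$ with $|J|=d+1$: the terms with $j\in J\cap S$ are $\infty$, while the terms with $j\in J\setminus S$ equal $p_{J-j}+p_{S\cup j}$; since $|J\setminus S|\geq 2$, the tropical Pl\"ucker relation for $p$ at $(S,J)$ ensures that the minimum over $j\in J\setminus S$, and hence over $j\in J$, is attained at least twice.

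For the second assertion, let $c,c'\in L$ be cocircuits with common support $C$, and choose $H$ and $S$ as above. For distinct $j^*,k^*\in C$, apply (\ref{eq:L(w)}) to $c$ at $J=S\cup\{j^*,k^*\}$: the $d-1$ terms with $j\in S$ are all $\infty$, leaving only $p_{S\cup k^*}+c_{j^*}$ and $p_{S\cup j^*}+c_{k^*}$, both of which are finite since $S\cup\{j^*\}$ and $S\cup\{k^*\}$ are bases of $M$ and $c_{j^*},c_{k^*}$ are finite. As the minimum must be achieved at least twice, these two terms are equal, giving $c_{j^*}-p_{S\cup j^*}=c_{k^*}-p_{S\cup k^*}$, so that $c_j-p_{S\cup j}$ is constant on $C$. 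The same holds for $c'$, and since both vectors equal $\infty$ on $H$, the difference $c-c'$ is a scalar multiple of $(1,\dotsc,1)$, so $c$ and $c'$ are equal in $\TP^{n-1}$.

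The main obstacle is the construction of $c^S$ in Part~1 and the verification that it lies in $L$: one has to guess that the correct auxiliary vector has $j$-th coordinate $p_{S\cup j}$ for a $(d-1)$-subset $S$ chosen inside the hyperplane $H\supseteq F$, and then read off the required two-fold minimum from the Pl\"ucker relation for $p$ itself. Once $c^S$ is available, both assertions follow from single applications of (\ref{eq:L(w)}) with, respectively, exactly one and exactly two finite terms.
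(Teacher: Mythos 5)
The paper itself gives no proof of this proposition---it is imported from \cite{MT}---so there is nothing internal to compare against; judged on its own terms, your argument is correct, and it is essentially the standard valuated-matroid argument: the vectors $c^S$ with $c^S_j=p_{S\cup j}$ for a maximal independent subset $S$ of a hyperplane are exactly the classical cocircuit vectors of the valuated matroid $p$, your verification that $c^S\in L(p)$ via the Pl\"ucker relation at the pair $(S,J)$ is the usual one, and the two-finite-term computation at $J=S\cup\{j^*,k^*\}$ correctly forces $c_j-p_{S\cup j}$ to be constant on the common support, giving uniqueness up to tropical scaling. Two conventions you use implicitly deserve a word: a cocircuit must be taken to be a vector of minimal \emph{nonempty} support (otherwise the all-$\infty$ vector would be the unique ``cocircuit'' and the statement would fail), and the ``minimum achieved at least twice'' conditions, both in \eqref{eq:L(w)} and in the Pl\"ucker relations, are understood to hold when all terms are $\infty$; both readings are standard and are what makes your steps (the single-finite-term contradiction showing $\rank(F)\le d-1$, and the membership check for $c^S$) airtight. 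Finally, in the second part the case $|\supp(c)|=1$ admits no pair $j^*\neq k^*$, but the conclusion is trivially true there, so no harm is done.
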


In fact, any tropical linear space $L$ is equal to the tropical convex hull of its cocircuits \cite{MT, YY, R},
so cocircuits can be thought of as vertices of $L$ from a tropical convexity point of view.

\begin{proof}[Proof of Theorem~\ref{r:L inj}.]
Suppose $\pi_\Sigma$ is a supportive restriction.  
The coordinates of $\pi_\Sigma$ are tropical maximal minors.  
None of these may be identically $\infty$, 
so $\Sigma$ supports a matching on every $J\in\binom{[n]}d$,
i.e.\ condition~(b) of~Proposition~\ref{r:SZ3.1} holds. 
The dimension of the domain of~$\pi_\Sigma$ 
is the sum of the dimensions of the fibers and the image,
which is $d + d(n-d) = d(n-d+1)$.  But this dimension is $|\Sigma|$,
and condition~(c) of Proposition~\ref{r:SZ3.1} 
applied to each singleton set $I$
implies that $|\Sigma|\leq d(n-d+1)$.
The equality case is achieved only when $|J_i(\Sigma)|=n-d+1$ for each~$i$.
Hence $\Sigma$ is definitionally a support set.

Now, assume $\Sigma$ is a support set. We want to prove that the map $A \mapsto L(A)$ is injective for
 matrices with support $\Sigma$, up to tropical rescaling of the
 rows. For this purpose we describe how to recover the
matrix $A$ from $\Sigma$ and the tropical linear space $L(A)$.
 Since $\Sigma$ is a support set, the underlying matroid of $L(A)$ 
is simply the uniform matroid $U_{d,n}$. The cocircuits of $L(A)$ are then
the vectors $c \in L(A)$ whose support is in $\binom{[n]}{n-d+1}$. 
In view of Theorem \ref{r:SZ3.6}, the rows of $A$ are cocircuits of $L(A)$, 
so they correspond precisely to the cocircuits
of $L(A)$ whose support is equal to one of the sets $J_i(\Sigma)$.
\end{proof}

Suppose $A\in \Rinf^{d\times n}$ is a matrix whose matching multifield
$\Lambda(A)$ is in fact a matching field.
By replacing the entries outside the support $\Sigma$ of $\Lambda(A)$
by $\infty$ yields a matrix with support $\Sigma$
and the same tropical minors as~$A$.
Any matrix whose support contains a matching multifield is a limit
of such matrices $A$. Since each $\pi_\Sigma$ is continuous, the following result follows.
\begin{corollary}\label{cor:Stiefel image}
The part of the Stiefel image $\SI(d,n)$ inside the tropical torus $\TT^{\binom{[n]}{d}-1}$ is equal
to the union of the images $\im\pi_\Sigma$ over all support sets $\Sigma$.
\end{corollary}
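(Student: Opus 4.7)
The plan is to prove the two inclusions separately. The containment $\bigcup_\Sigma \im\pi_\Sigma \subseteq \SI(d,n)\cap\TT^{\binom{[n]}d-1}$ is immediate: $\pi_\Sigma$ is by definition a restriction of $\pi$, and a support set satisfies condition~(b) of Proposition~\ref{r:SZ3.1}, so for every $J\in\binom{[n]}d$ there is a matching on~$J$ contained in $\Sigma$, whence every coordinate of $\pi_\Sigma(A)$ is finite.

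For the reverse inclusion, I would first handle the case where $\Lambda(A)$ is already a matching field, meaning that the minimum in the permutation expansion of each tropical maximal minor of~$A$ is attained uniquely. Then $\Lambda(A)$ is a coherent matching field, so by Proposition~\ref{r:bij iii iv} it corresponds to a vertex of $\Pi_{d,n}$, which lies in a unique support face $\Pi_{d,n}(\Sigma)$ whose associated support set is $\Sigma := \supp(\Lambda(A))$. Replacing the entries of~$A$ outside $\Sigma$ by $\infty$ produces a matrix $A'$ with $\supp(A')\subseteq\Sigma$; since the unique minimizing matching for each~$J$ is still present, $\pi(A')=\pi(A)$, exhibiting $\pi(A)\in\im\pi_\Sigma$.

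The general case, where $\Lambda(A)$ may have ties, reduces to the previous case by a perturbation-and-limit argument. Choose small generic reals $\epsilon_{ij}$ for each $(i,j)\in\supp(A)$ and form $A_\epsilon$ from~$A$ by adding $\epsilon_{ij}$ to the finite entries. For $\epsilon$ sufficiently small and generic, each tie in a permutation expansion of~$A$ is broken in favour of a single matching that was already minimizing in~$A$, so $\Lambda(A_\epsilon)$ is a matching field contained in $\Lambda(A)$. By the previous paragraph, $\pi(A_\epsilon)\in\im\pi_{\Sigma_\epsilon}$ for the support set $\Sigma_\epsilon := \supp(\Lambda(A_\epsilon))\subseteq\supp(A)$. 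Because there are only finitely many support sets in $[d]\times[n]$, I can pass to a sequence $\epsilon_n\to 0$ along which $\Sigma_{\epsilon_n}$ is a constant support set $\Sigma\subseteq\supp(A)$. The matrices $A'_{\epsilon_n}$ obtained by restricting $A_{\epsilon_n}$ to~$\Sigma$ then converge entrywise to the matrix $A'$ that agrees with~$A$ on~$\Sigma$ and equals $\infty$ elsewhere; continuity of $\pi_\Sigma$ yields $\pi_\Sigma(A') = \lim_n \pi(A_{\epsilon_n}) = \pi(A)$, so $\pi(A)\in\im\pi_\Sigma$.

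The main obstacle is organising the perturbation so that $\Lambda(A_\epsilon)\subseteq\Lambda(A)$ and so that the constant-support-set subsequence produces a valid limit matrix $A'$ with $\supp(A')\subseteq\Sigma\subseteq\supp(A)$; both points reduce to standard small-and-generic considerations, combined with the finiteness of the collection of support sets and the continuity of $\pi_\Sigma$.
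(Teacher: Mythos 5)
Your proof is correct and follows essentially the same route as the paper: when $\Lambda(A)$ is a matching field you restrict $A$ to its support (a support set) without changing the tropical minors, and you handle the general case by a small generic perturbation followed by a limit, using the finiteness of the set of support sets and the continuity of $\pi_\Sigma$. The paper's own argument is just a terser version of exactly this, so no further comment is needed.
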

In view of Theorem \ref{r:L inj}, Corollary \ref{cor:Stiefel image} describes
the finite part of $\SI(d,n)$ as a union of nicely parametrized sets that are homeomorphic to real vector spaces.

\begin{example}\label{ex:pointed}
The {\em pointed support sets} are a class of support sets
for any $d \leq n$.
Up to a reordering of the set of columns $[n]$, they have the form
\[\Sigma = \{(i,i):i\in[d]\}\cup\{(i,j):i\in[n]\setminus[d] \text{ and } j\in[d]\},\]
corresponding to matrices of the form
\[\begin{pmatrix}
\ast & \infty & \cdots & \infty & \infty & \ast & \ast & \cdots & \ast & \ast \\
\infty & \ast & \cdots & \infty & \infty & \ast & \ast & \cdots & \ast & \ast \\
\vdots & \vdots & \ddots & \vdots & \vdots & \vdots & \vdots & \ddots & \vdots & \vdots \\
\infty & \infty & \cdots & \ast & \infty & \ast & \ast & \cdots & \ast & \ast \\
\infty & \infty & \cdots & \infty & \ast & \ast & \ast & \cdots & \ast & \ast \\
\end{pmatrix}\]
where each $\ast$ represents a real number. Tropical linear spaces in the image of $\pi_\Sigma$
for $\Sigma$ a pointed support set have been studied in \cite{HJS, LTLS}, where it was shown
that their dual matroid subdivisions are {\em conical matroid subdivisions}, i.e. all
their facets share a common vertex. This fact was used in \cite{LTLS} to give a
simple proof that these tropical linear spaces satisfy Speyer's $f$-vector conjecture.
\end{example}

\begin{example}\label{ex:caterpillar}
Let $d=2$. Tropical linear spaces are then metric trees with
a single unbounded edge in each coordinate direction, which we regard
as labelled leaves.
It is easy to see from Theorem \ref{r:SZ3.6} that, in this case,
any support set is in fact a pointed support set.
It was shown in \cite[Example 4.2]{LTLS}, and it also follows from Theorem~\ref{r:bounded} below,
that any tropical linear space in the part of the Stiefel image $\SI(2,n)$
inside the tropical torus $\TT^{\binom{[n]}{2}-1}$
must then be a {\em caterpillar tree}, i.e.\ 
a tree obtained by gluing rays to a homeomorphic image of $\R$,
whose bounded part must therefore be homeomorphic to a segment.   
In particular, the {\em snowflake tree}
of Figure~\ref{f:snowflake} is the combinatorial type of a tropical linear space
in $\Gr(2,6)$ which is not in the Stiefel image $\SI(2,6)$.
\end{example}

\begin{figure}
\centering
\includegraphics{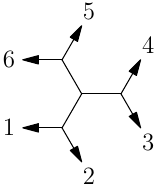}
\caption{A \emph{snowflake tree}, representing the smallest tropical linear
space which is not contained in the Stiefel image.}
\label{f:snowflake}
\end{figure}

\section{Tropical hyperplane arrangements}\label{sec:hyperplanes}
In this section we generalize to arbitrary matrices in $\Rinf^{d \times n}$ 
the results in \cite{DS, TropOMs} relating the combinatorics 
of a tropical hyperplane arrangement with an appropriate regular subdivision of 
a product of simplices. We then use this machinery to investigate the connection between
the matching multifield of a tropical matrix and the combinatorial type of its associated
tropical hyperplane arrangement.

For any positive integer $m$ and any $K \subseteq [m]$, consider the simplex
\[\Delta_K = \conv\{e_k : k\in K\} \subseteq \R^m. \]
The faces of the standard $(m-1)$-dimensional simplex $\Delta_{[m]}$
are naturally in bijection with subsets $K$ of $[m]$, with $K$ associated to $\Delta_K$.

Given a point $a \in \TP^{m-1}$, the \emph{tropical hyperplane} $H \subseteq \TP^{m-1}$ with
vertex $-a$ is the set
\begin{equation}\label{eq:hyperplane}
H = \left\{ x \in \TP^{m-1} : \min_{k \in [m]} (a_k + x_k) \text{ is achieved at least twice} \right\}.
\end{equation}
Let $K$ denote the support $\supp(a) = \{ k \in [m] : a_k \neq \infty \}$ of $a$.
We will also say that $K$ is the {\em support} of~$H$.
The part of $H$ inside $\TT^{m-1}$ is
the codimension $1$ skeleton of a translate of the normal
fan of the simplex $\Delta_K$, so it naturally comes endowed with a fan structure. 
The faces of $H$ are in bijection with subsets of~$K$ of size at least 2,
corresponding to the positions where the minimum \eqref{eq:hyperplane} defining $H$ is attained.
We will find it useful, however, to consider the complete fan induced by $H$ on $\TT^{m-1}$:
if $L \subseteq K$ is nonempty, we will write 
\[
F_L(H) = \left\{ x \in \TT^{m-1} : a_l + x_l = \min_{k \in [m]} (a_k + x_k) \text{ for all } l \in L \right\}.
\]
In the case $L = \{\ell\}$, we will simply write $F_\ell(H)$ for the sector $F_{\{\ell\}}(H)$. We have $F_L(H) = \bigcap_{\ell\in L} F_{\ell}(H)$.
Figure~\ref{f:side sets} depicts some tropical hyperplanes in $\TT^2$, together with their corresponding supports.

\begin{figure}[ht]
\centering
\includegraphics[width=12cm]{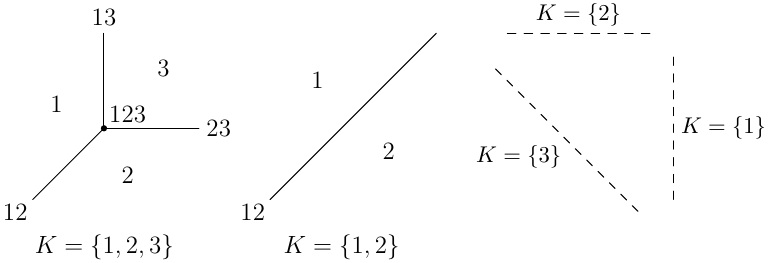}
\caption{At the left and center, two tropical hyperplanes in $\TT^2$,
with the faces of their induced complete fans labelled with subsets of their corresponding support~$K$.
At the right, a more schematic illustration of three tropical lines 
in $\TP^2$ residing at infinity.
}
\label{f:side sets}
\end{figure}

A matrix $A=(a_{ij})\in\Rinf^{d\times n}$ in which no column 
has all entries equal to $\infty$ gives rise to a (ordered) tropical
hyperplane arrangement $\mc H(A) = (H_1,\ldots,H_n)$ 
in~$\TP^{d-1}$ whose $j$\/th hyperplane is 
the tropical hyperplane with vertex $(-a_{ij})_{i \in [d]}$.
The {\em support} of a hyperplane arrangement $\mc H=(H_1,\ldots,H_n)$
is the set
\[
\supp(\mc H) = \{(i,j) \in [d] \times [n]: \text{$i$ is in the support of $H_j$}\},
\]
so that the support of $\mc H(A)$ is equal to the support of the matrix~$A$.
All these notions make sense also in the case where $d > n$,
and we shall consider later the tropical arrangement $\mc H(A^{\rm t})$ in $\TP^{n-1}$
determined by the rows of $A$.

We will find it convenient to identify the tropical hyperplane arrangement
$\mc H$ with the (labelled) polyhedral complex supported on $\TT^{d-1}$
which is the common refinement of the associated complete fans, as we describe below.  
Each point $x\in\TT^{d-1}$ determines a ({\em tropical})
{\em covector}\footnote{For what we call a tropical covector 
the term used in the literature \cite{TropOMs,DS} is \emph{type}.  We find
the semantically meager word ``type'' overburdened with
specific senses in mathematics, and wish to avoid increasing its load.
The contrast with ``combinatorial type'' is especially unfortunate.}
\[\tc(x) = \big\{(i,j)\in[d]\times[n] : x\in F_{j}(H_i)\,\big\}.\]
In the interest of making our notation not too cumbersome, 
we will sometimes describe a covector $\tau \subseteq [d] \times [n]$
by the tuple $(I_1(\tau), I_2(\tau), \dotsc , I_n(\tau))$.
The faces of $\mc H$ are then the closures of the
domains within $\TT^{d-1}$
on which the covector is constant.
By the {\em combinatorial type} of~$\mc H$ we mean the collection of
the covectors of all its faces.
We will write $\TC(\mc H)$ for this set of covectors\footnote{$\TC(\mc H)$
being a set of objects named $\tc$, 
or standing for \emph{type combinatoire} and \emph{tipo combinatorio}.},
and let $\TC(A)$ abbreviate $\TC(\mc H(A))$.
Figure~\ref{f:hyparr} depicts a tropical hyperplane arrangement in $\TP^2$,
together with the tropical covectors labelling some of its faces.

\begin{figure}[ht]
\centering
\begin{minipage}[c]{1.7in}
\[A = \begin{pmatrix}
0 & 3 & 0 & \infty & \infty \\
\infty & 0 & 0 & 2 & \infty \\
\infty & \infty & 0 & 0 & 0
\end{pmatrix}\]
\end{minipage}
\qquad
\raisebox{-0.5\height}{\includegraphics{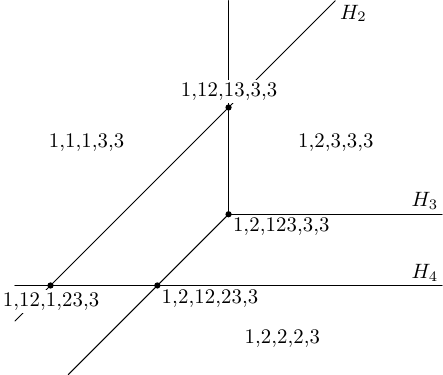}}
\caption{A matrix $A$, supported on a support set, and the tropical hyperplane
arrangement it induces.  The vertices and a few other faces of the arrangement are labelled
with their tropical covectors.  Hyperplanes $H_1$ and~$H_5$, at infinity, are not drawn.}
\label{f:hyparr}
\end{figure}

Much previous work on the combinatorics of tropical hyperplane arrangements 
has restricted itself to the case of arrangements  
with the full support $[d] \times [n]$ (called nondegenerate arrangements),
 in which all of the hyperplanes are of 
the shape containing a vertex.
Such arrangements, for instance, were the subject of Develin and Sturmfels in~\cite{DS}.
The combinatorial type of such an arrangement
$\mc H$ is the data of the tropical oriented matroid 
which Ardila and Develin associate to~$\mc H$ in~\cite{TropOMs}.
We will prefer here to look at arrangements $\mc H$ with arbitrary support sets.

A duality between nondegenerate hyperplane arrangements and subdivisions of 
a product of simplices was described in~\cite{DS}. We now show how this duality 
can be generalized to arrangements with more general support sets.
Suppose $A = (a_{ij}) \in \Rinf^{d\times n}$ is a tropical matrix
in which no column has all entries equal to $\infty$.
Let $\Sigma$ be the support of $A$, and let $\Gamma_\Sigma$ be the polytope
\[
 \Gamma_\Sigma = \conv \{ (e_i,-e_j) \in \R^d \times \R^n : (i,j) \in \Sigma \}.
\]
Polytopes of the form $\Gamma_\Sigma$ are called {\em root polytopes} in \cite{PostPAB},
where Postnikov studies them in connection to generalized permutohedra.
The matrix $A$ induces a regular subdivision $\mc S(A)$ 
of $\Gamma_\Sigma$ by lifting the vertex 
$(e_i,-e_j)$ to height $a_{ij}$ and projecting back to $\R^d \times \R^n$
the lower faces of the resulting polytope. 
As described in \cite[Section 14]{PostPAB}, the Cayley trick allows us
to encode the subdivision $\mc S(A)$ using a 
mixed subdivision $\mc M(A)$ of the sum of simplices 
$\sum_{j=1}^n \Delta_{I_j(\Sigma)} \subseteq \R^d$, 
where $I_j(\Sigma) = \{ i \in [d] : (i,j) \in \Sigma \}$. This mixed subdivision
is isomorphic to the subcomplex $\mc T(A)$ of $\mc S(A)$ consisting of 
the faces that contain for every $j \in [n]$ at least one vertex of the form $(e_i,-e_j)$.
The subcomplex $\mc T(A) \cong \mc M(A)$ determines the whole subdivision $\mc S(A)$ (and vice versa); see \cite[Proposition 14.5]{PostPAB}.

\begin{proposition}\label{r:dual mixed}
 The hyperplane complex $\mc H(A)$ is dual to the mixed subdivision $\mc M(A)$ of $\sum_{j=1}^n \Delta_{I_j(\Sigma)}$.
 A face of $\mc H(A)$ labelled by a tropical covector $\tau \subseteq \Sigma$ is dual to the cell
 of $\mc M(A)$ obtained as the sum $\sum_{j=1}^n \Delta_{I_j(\tau)}$.
\end{proposition}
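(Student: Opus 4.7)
The plan is to derive this as an instance of the standard duality between a regularly subdivided Minkowski sum and the common refinement of the inner normal fans of its summands, using the Cayley trick to pass between subdivisions of $\Gamma_\Sigma$ and mixed subdivisions of $\sum_j \Delta_{I_j(\Sigma)}$. The essential observation is that each tropical hyperplane $H_j$ is, combinatorially, the inner normal fan of the simplex $\Delta_{I_j(\Sigma)}$, with the translation absorbed into the column $a_{\bullet j}$ of the lift.

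First I would check this for a single $j$: directly from the definition of $F_L(H_j)$, the relatively open cell of the complete fan of $H_j$ containing $x$ is indexed by the set $I_j(\tc(x)) = \{\, i \in I_j(\Sigma) : a_{ij}+x_i = \min_{i' \in I_j(\Sigma)}(a_{i'j}+x_{i'})\,\}$, which is exactly the label of the face of the inner normal fan of $\Delta_{I_j(\Sigma)}$ at $e_i$ ($i \in I_j(\tc(x))$), translated so that the origin sits at the tropical vertex $-a_{\bullet j}$. The classical identity that the common refinement of inner normal fans of polytopes $P_1,\ldots,P_n$ coincides with the inner normal fan of $P_1+\cdots+P_n$, and carries the regular subdivision induced by the sum of the individual liftings, then says that $\mc H(A)$, viewed as a complete polyhedral complex on $\TT^{d-1}$, is dual to a regular subdivision of $\sum_j \Delta_{I_j(\Sigma)}$ with lifting heights $a_{ij}$ on the vertex $e_i$ of the $j$th summand.

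Second, I would identify this subdivision with $\mc M(A)$. This is precisely what the Cayley trick provides: the regular subdivision $\mc S(A)$ of the Cayley polytope $\Gamma_\Sigma$ with heights $a_{ij}$ at $(e_i,-e_j)$ corresponds (via the subcomplex $\mc T(A)$) to the mixed subdivision $\mc M(A)$. Under this correspondence, the mixed cell at a generic point $x$ is assembled by independently selecting, on each summand $\Delta_{I_j(\Sigma)}$, the face minimized by $x$ for the lift $a_{\bullet j}$, and Minkowski summing; by the computation in the previous paragraph this selected face is $\Delta_{I_j(\tc(x))}$, so the cell of $\mc M(A)$ dual to the face of $\mc H(A)$ with tropical covector $\tau$ is $\sum_j \Delta_{I_j(\tau)}$, as claimed.

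The main obstacle is not conceptual but a matter of bookkeeping: the paper uses the convention $(e_i,-e_j)$ for vertices of $\Gamma_\Sigma$ whereas the Cayley trick as usually stated uses $(e_i,e_j)$, and one must keep straight the conventions of inner versus outer normal fan and of lower versus upper hull so that the sign in the tropical minimum matches the sign in $\face_u$ as set out in the \emph{Conventions} subsection. None of these choices affects combinatorial content; once they are consistently tracked, the proposition follows directly from the Cayley trick together with the Minkowski sum identity for inner normal fans, both of which hold for regular subdivisions of arbitrary support, with no requirement that $\Sigma$ be a support set.
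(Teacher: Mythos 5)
Your argument is correct, but it takes a different route from the paper's. You treat each hyperplane $H_j$ as the translate by $-a_{\bullet j}$ of the inner normal fan of $\Delta_{I_j(\Sigma)}$, invoke the standard fact that the common refinement of these (lifted) normal complexes is dual to the coherent mixed subdivision of $\sum_j \Delta_{I_j(\Sigma)}$ induced by the summed heights, and then use the coherence statement of the Cayley trick to identify that mixed subdivision with the $\mc M(A)$ defined via the subcomplex $\mc T(A)$ of $\mc S(A)$; the face correspondence $\tau \mapsto \sum_j \Delta_{I_j(\tau)}$ then drops out of the face-selection formula for Minkowski sums. The paper instead argues directly: it introduces the polyhedron $\mc P_A = \{(x,y) : x_i + a_{ij} \geq y_j \text{ for } (i,j)\in\Sigma\}$, observes that $\partial \mc P_A$ is dual to $\mc S(A)$, and shows the projection $(x,y)\mapsto x$ restricts to an isomorphism from the subcomplex $\mc Q(A)$ (dual to $\mc T(A)$) onto $\mc H(A)$, with $y = x\odot A$ on $\mc Q(A)$. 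Your version is shorter and cleanly modular, at the cost of leaning on two imported results (the Minkowski-sum/normal-fan duality with liftings and the regular-to-coherent compatibility of the Cayley trick), and your sign and genericity bookkeeping is acknowledged but not carried out; the paper's construction is more self-contained and, importantly, its explicit ingredients (the graph of $x\mapsto x\odot A$ inside $\mc P_A$ and the identification of covectors with the hyperplanes $h_{ij}$) are reused later, e.g.\ in Proposition~\ref{r:odot A} and the analysis of $\im(\odot A)$ in Section~\ref{sec:tropical linear maps}, which your route would not directly supply. Also note the minor wording point that the face-selection argument should be stated for every $x$, not only generic $x$, so that lower-dimensional cells of $\mc M(A)$ are matched with the corresponding faces of $\mc H(A)$; the selection formula does hold for all $x$, so this is a matter of phrasing rather than a gap.
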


Figure~\ref{f:mixed_subdiv} shows the mixed subdivision dual to the hyperplane arrangement presented in Figure~\ref{f:hyparr}.
\begin{figure}[ht]
\centering
\includegraphics[scale=0.3]{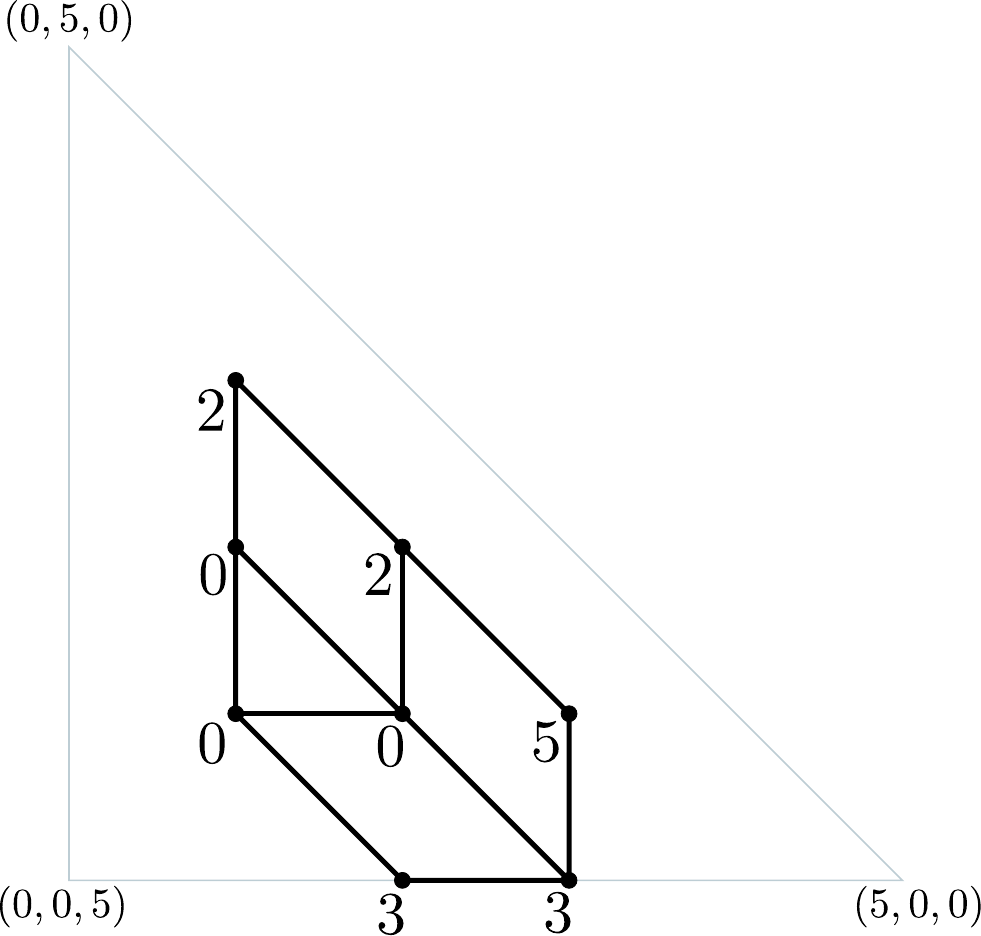}
\caption{The mixed subdivision $\mc M(A)$ dual to the (negative of the) hyperplane arrangement in
Figure~\ref{f:hyparr}, sitting inside the simplex $5 \cdot \Delta_{[3]}$.
The numbers represent lifting heights for the
regular subdivision.}
\label{f:mixed_subdiv}
\end{figure}

Proposition \ref{r:dual mixed} is a direct generalization of Lemma~22 
in~\cite{DS}, but we will use later some of the ideas involved in its proof, in connection
to tropical linear spaces. 
We give a proof for explicitness.

\begin{proof}[Proof of Proposition \ref{r:dual mixed}] 
 Consider the polyhedron 
\begin{equation}
 \mc P_A = \{ (x,y) \in \R^d \times \R^n : x_i + a_{ij} \geq y_j \text{ for all } (i,j) \in \Sigma \}.
\end{equation}
The facets of $\mc P_A$ are given by the hyperplanes $h_{ij} = \{ (x,y) \in \R^d \times \R^n : x_i + a_{ij} = y_j \}$,
with $(i,j) \in \Sigma$.
A nonempty subcollection of these hyperplanes specifies a (nonempty)
face of $\mc P_A$ if and only if the corresponding vertices 
of $\Gamma_\Sigma$ form a face in the subdivision $\mc S(A)$,
so the boundary $\partial \mc P_A$ of $\mc P_A$ is dual to $\mc S(A)$. Denote by $\mc Q(A)$ the
subcomplex of $\partial \mc P_A$ consisting of the faces contained for 
every $j \in [n]$ in at least one of the hyperplanes $h_{ij}$.
The complex $\mc Q(A)$ is then dual to the subcomplex $\mc T(A)$ of $\mc S(A)$ and thus to the mixed subdivision $\mc M(A)$.

Denote by $\phi: \R^d \times \R^n \to \R^d$ the projection onto the first factor. 
We will prove that $\phi$ induces an isomorphism between the complexes $\mc Q(A)$ and $\mc H(A)$,
thus proving our first assertion. 
Given $x \in \R^d$, for any point $(x,y)\in \mc P_A$ 
we have $y_j \leq x_i + a_{ij}$ for all $(i,j) \in \Sigma$.  
If also $(x,y) \in \mc Q(A)$ then for every~$j\in[n]$ there is some
$i\in[d]$ such that $y_j = x_i + a_{ij}$.  In this situation
$y_j$ is uniquely determined to be~$\min_{i\in[d]}x_i+a_{ij}$
for each~$j$, or more elegantly, $y = x \odot A$
(where $\odot$ denotes tropical matrix multiplication).
This shows that $\phi|_{\mc Q(A)}$ is injective.
On the other hand, for any $x \in \R^d$ the point $(x, x \odot A)\in \R^d \times \R^n$
is in $\mc Q(A)$, so $\phi|_{\mc Q(A)}$ is also surjective.
Moreover, a pair $(i,j)$ is in the covector $\tc(x)$ 
if and only if the point $(x, x \odot A)$ is in the hyperplane $h_{ij}$,
showing that $\phi|_{\mc Q(A)}$ preserves the polyhedral complex structure.

The exact correspondence between the faces follows from tracking faces in the argument above.
A cell $\sum_{j=1}^n \Delta_{I_j(\tau)}$ of $\mc M(A)$ corresponds to the face
$\conv\{(e_i,-e_j) : (i,j) \in \tau \}$ of $\mc S(A)$. This face in turn corresponds to
the face $\bigcap_{(i,j) \in \tau} h_{ij}$ of $\mc P_A$, which after projecting back to $\R^n$
gets mapped to the face of $\mc H(A)$ labelled by~$\tau$.
\end{proof}

\begin{corollary}\label{r:bijection_arr_sub}
There is a bijection between combinatorial types of arrangements
of $n$ tropical hyperplanes in $\TP^{d-1}$
and regular subdivisions of subpolytopes $P$ of the product of
simplices $\Gamma_{[d] \times [n]} \cong \Delta_{[d]} \times \Delta_{[n]}$, such that 
$P$ has a vertex of the form $(e_i,-e_j)$ for each $j \in [n]$.
\end{corollary}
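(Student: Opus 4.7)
The plan is to derive the corollary directly from Proposition \ref{r:dual mixed} together with the Cayley trick, which packages mixed subdivisions of $\sum_{j} \Delta_{I_j(\Sigma)}$ as regular subdivisions of the root polytope $\Gamma_\Sigma$.

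First I would set up the forward map from combinatorial types of arrangements to regular subdivisions. Given a matrix $A \in \Rinf^{d \times n}$ whose columns are not identically $\infty$, form $\mc S(A)$, the regular subdivision of $\Gamma_{\supp(A)}$ obtained by lifting the vertex $(e_i, -e_j)$ to height $a_{ij}$. The polytope $\Gamma_{\supp(A)}$ is a subpolytope of $\Gamma_{[d] \times [n]} \cong \Delta_{[d]} \times \Delta_{[n]}$ (after negating the second factor), and the column condition on $A$ guarantees that it contains a vertex of the form $(e_i, -e_j)$ for each $j \in [n]$. By Proposition \ref{r:dual mixed} the combinatorial type of $\mc H(A)$ is that of the dual complex of the mixed subdivision $\mc M(A)$, and via the Cayley trick $\mc M(A)$ carries exactly the same data as $\mc S(A)$. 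In particular the combinatorial type of $\mc H(A)$ determines $\mc S(A)$, so the forward map is well-defined on equivalence classes.

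Next I would verify surjectivity by constructing, for each admissible subdivision, an arrangement mapping to it. Let $P$ be a subpolytope of $\Gamma_{[d] \times [n]}$ containing a vertex of the form $(e_i, -e_j)$ for each $j$, and let $\mc S$ be a regular subdivision of $P$. By definition of regularity, $\mc S$ is realized by some height function assigning values $a_{ij} \in \R$ to the vertices $(e_i, -e_j) \in P$. Setting $a_{ij} = \infty$ for all remaining $(i,j)$ produces a matrix $A \in \Rinf^{d \times n}$ with $\supp(A)$ equal to the vertex set of $P$ and no column entirely $\infty$, satisfying $\mc S(A) = \mc S$ by construction. Applying Proposition \ref{r:dual mixed} once more, the arrangement $\mc H(A)$ then furnishes a preimage.

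The main obstacle should be slight. The technical content is already packaged in Proposition \ref{r:dual mixed}, so the only care needed is to confirm that the combinatorial type of the labelled polyhedral complex $\mc H(A)$ really records the same information as $\mc S(A)$, without loss under the Cayley trick. This follows because, by Proposition \ref{r:dual mixed}, the tropical covector $\tau$ of a face recovers both the dual cell $\sum_{j} \Delta_{I_j(\tau)}$ of $\mc M(A)$ and its Minkowski summand decomposition, which is exactly the data needed to reconstruct $\mc S(A)$.
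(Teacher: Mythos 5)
Your proposal is correct and follows the paper's (implicit) route exactly: the corollary is stated as an immediate consequence of Proposition~\ref{r:dual mixed} together with the Cayley trick, which is precisely the argument you spell out — covectors of $\mc H(A)$ give the Minkowski-decomposed cells of $\mc M(A)$ and hence $\mc S(A)$, and conversely any regular subdivision of a subpolytope meeting every column is realized by choosing finite heights on its vertices and $\infty$ elsewhere.
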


\begin{remark}\label{rem:transpose_arr}
Note that Corollary \ref{r:bijection_arr_sub} implies a certain duality between
arrangements of $n$ tropical hyperplanes in $\TP^{d-1}$ and
arrangements of $d$ tropical hyperplanes in $\TP^{n-1}$. In fact,
suppose that $A \in \Rinf^{d\times n}$ is a tropical matrix
in which no column and no row has all entries equal to $\infty$.
Covectors labelling the faces of the hyperplane arrangement $\mc H(A)$
correspond to faces of $\mc S(A)$ that contain for every $j \in [n]$
at least one vertex of the form $(e_i,-e_j)$. Since $\mc S(A^{\rm t})$ is naturally
isomorphic to $\mc S(A)$, we also have that
covectors labelling the faces of $\mc H(A^{\rm t})$
correspond to faces of $\mc S(A)$ that contain for every $i \in [d]$
at least one vertex of the form $(e_i,-e_j)$.
In particular, the sets of covectors appearing in $\mc H(A)$ and $\mc H(A^{\rm t})$
in which all vertices have degree at least 1 are exactly the same
(up to exchanging the roles of $d$ and $n$).
Moreover, this set of covectors includes covectors for all interior faces of the subdivision $\mc S(A)$ and thus
it completely determines $\mc S(A)$, together with
the combinatorics of both $\mc H(A)$ and $\mc H(A^{\rm t})$.
We will push these ideas further in Proposition \ref{r:odot A}.
\end{remark}

We now turn to the study of the connection between the matching multifield $\Lambda(A)$
and the combinatorial type of the hyperplane arrangement $\mc H(A)$. 
For the rest of this section we will restrict our attention to matrices $A$ whose
support contains a matching field.

A tropical square matrix is called {\em tropically singular} if the minimum in
the permutation expansion of its tropical determinant is achieved at least twice.
The {\em tropical rank} of a matrix $A \in \Rinf^{d \times n}$ is the largest $r$ such that
$A$ contains a tropically non-singular $r \times r$ minor.

\begin{theorem}\label{thm:(a) -> (c)}
The matching multifield $\Lambda(A)$ depends only on the combinatorial type $\TC(A)$ of the hyperplane
arrangement $\mc H(A)$.  That is, if $A, A'\in\Rinf^{d \times n}$ are matrices
such that $\TC(A)=\TC(A')$, then $\Lambda(A) = \Lambda(A')$.
\end{theorem}

\begin{proof}
By the Cayley trick as manifested in Corollary~\ref{r:bijection_arr_sub}, 
it is enough to prove that the cell complex $\mc S(A)$ determines $\Lambda(A)$.
Let $\Sigma$ denote the support of $A$.
Faces in the subdivision $\mc S(A)$ correspond to subsets $\tau \subseteq \Sigma$
for which it is possible to add constants to the rows or columns of $A$ to get a
non-negative matrix $A' = (a'_{ij})$ satisfying $a'_{ij} = 0$ if and only if $(i,j)\in \tau$.
The matching multifield $\Lambda(A')$ determined by such a matrix $A'$ is always equal to $\Lambda(A)$.
Moreover, if $\tau$ contains some matching $\{(1,j_1), \dotsc, (d,j_d)\}$ then the set of
matchings on the set $J = \{j_1,\dotsc,j_d\}$ in the multifield $\Lambda(A')$ is precisely
the set of matchings on $J$ contained in $\tau$. 

We claim that in fact any matching
in $\Lambda(A)$ is contained in some subset $\tau$ corresponding to a face in $\mc S(A)$.
To see this, assume that $\lambda$ is a matching on the set $J$, and $\lambda \in \Lambda(A)$.
We can perturb the matrix $A$ a little to get a matrix $B$ in which $\lambda$ is the only
matching on the set $J$ included in $\Lambda(B)$. This means that the square submatrix $B_J$ of $B$
indexed by the columns $J$ has tropical rank equal to $d$, and so 
\cite[Corollary 5.4]{DSS} implies that the mixed subdivision $\mc M(B_J)$ has an interior vertex. The covector indexing this vertex must be a matching on $J$, and thus equal to $\lambda$.
The matching $\lambda$ also indexes a face of the subdivision $\mc S(B)$, as can be seen by adding 
large enough constants to the columns of $B$ not in $J$. Finally, since the subdivision $\mc S(B)$
is a refinement of $\mc S(A)$, the matching $\lambda$ is contained 
in some subset $\tau$ corresponding to a face in $\mc S(A)$.
It follows that the matching multifield $\Lambda(A)$ is precisely the set of matchings
contained in some subset $\tau$ corresponding to a face in $\mc S(A)$.
\end{proof}

\begin{remark}
It follows from the proof of Theorem \ref{thm:(a) -> (c)} 
that the matching multifield $\Lambda(A)$
is equal to the set of matchings contained in subsets $\tau \subseteq \Sigma$ 
corresponding to interior faces of $\mc S(A)$.
Following the ideas given in Remark \ref{rem:transpose_arr},
we thus have that $\Lambda(A)$ is also equal to 
the set of matchings contained in some covector appearing in $\TC(A)$.
Similarly, $\Lambda(A)$ is equal to 
the set of matchings contained in some covector of $\TC(A^{\rm t})$
(after exchanging the roles of $d$ and $n$).
\end{remark}

To say more about the connection stated by Theorem~\ref{thm:(a) -> (c)},
we will analyze it in terms of the fan structures induced on
the set $\R^\Sigma\subseteq\Rinf^{d\times n}$ of matrices whose
support is $\Sigma$ governing the combinatorics of
$\mc H(A)$ and $\Lambda(A)$, respectively.
Assume $A \in \Rinf^{d \times n}$ has support $\Sigma$. 
As stated in Proposition \ref{r:dual mixed}, the combinatorial type
of the hyperplane arrangement $\mc H(A)$ is encoded by the regular
subdivision $\mc S(A)$ induced by $A$ on the polytope $\Gamma_\Sigma$,
or what is equivalent, by which cone of the secondary fan of $\Gamma_\Sigma$
contains $A$.  
On the other hand, Proposition \ref{r:bij iii iv} states that 
the matching multifield $\Lambda(A)$ is encoded by
the data of which face is $\face_A \Pi_{d,n}(\Sigma)$, that is, 
by which cone of the normal fan of $\Pi_{d,n}(\Sigma)$
contains $A$.

\pagebreak[2] 
\begin{proposition}\label{prop:codim 1 skeleta}
Let $\Sigma$ contain a support set.

\begin{enumerate}\renewcommand{\labelenumi}{(\roman{enumi})}
\item The support of the codimension~1 skeleton of
the secondary fan of $\Gamma_\Sigma$ is
the set of matrices $A$ supported on $\Sigma$ such that 
the minimum in some non-infinite tropical minor of $A$ is attained twice.

\item The support of the codimension~1 skeleton of
the normal fan of $\Pi_{d,n}(\Sigma)$ is 
the set of matrices $A$ supported on $\Sigma$ such that 
the minimum in some (non-infinite) tropical \emph{maximal} minor of $A$ is attained twice.
\end{enumerate}
\end{proposition}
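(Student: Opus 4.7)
The plan is to handle (ii) directly from the Minkowski-sum decomposition $\Pi_{d,n}(\Sigma) = \sum_{J \in \binom{[n]}{d}} \Pi_{d,J}(\Sigma)$ (observed in the proof of Proposition \ref{prop:dim of support face}) and to attack (i) via a combinatorial analysis of which bipartite cycles of $\Sigma$ appear in faces of $\mc S(A)$.

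For (ii), the normal fan of a Minkowski sum is the common refinement of the normal fans of the summands, and the codimension-1 skeleton of a common refinement is the union of the individual codimension-1 skeleta. Each $\Pi_{d,J}(\Sigma)$ is the Newton polytope of the tropical maximal minor $\pi(A)_J$, and $A$ lies in the codimension-1 skeleton of its normal fan exactly when $\face_A \Pi_{d,J}(\Sigma)$ is positive-dimensional, i.e., when $\pi(A)_J$ is attained at two or more matchings. Taking the union over $J$ yields the statement.

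For (i), I note that $\mc S(A)$ fails to be a triangulation iff some face $\tau \subseteq \Sigma$ of $\mc S(A)$ contains a bipartite cycle (since the vertices of $\Gamma_\Sigma$ indexed by $\tau$ are affinely independent iff $\tau$ is a forest). The forward direction is straightforward: if $\tau$ contains a cycle $C$ with vertex sets $I,J$, then summing the supporting hyperplane equalities $u_i - v_j + a_{ij} = c$ over either of the two matchings $\mu_1,\mu_2$ of $C$ gives $\sum_{\mu_k} a_{ij} = c|I| - \sum_I u_i + \sum_J v_j$, while the corresponding inequalities for any other matching on $(I,J)$ in $\Sigma$ yield $\sum_{\lambda} a_{ij} \geq$ the same constant, so both $\mu_k$ realize the minimum of the $(I,J)$ tropical permanent.

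The reverse direction is the main obstacle. Suppose the $(I,J)$ permanent is attained at $\lambda_1 \neq \lambda_2$. By an exchange argument (flipping any single cycle in $\lambda_1 \triangle \lambda_2$ cannot decrease the sum, forcing each such cycle to be individually balanced), I reduce to $\lambda_1 \triangle \lambda_2 = C$ a single cycle with $I,J$ equal to its vertex sets and both $\mu_1,\mu_2$ minimal. The cycle equations $u_i - v_j + a_{ij} = c$ are consistent by balance; I solve them on $I \cup J$ and extend $(u,v)$ outside with sufficiently large values so that all constraints involving an index off $I \cup J$ hold trivially. The delicate constraints are $u_{i_0} - v_{j_0} + a_{i_0 j_0} \geq c$ for extra edges $(i_0,j_0) \in \Sigma \cap (I \times J) \setminus C$, which I handle by an augmenting-path argument: modifying $\mu_1$ by adding $(i_0,j_0)$ and removing the two $\mu_1$-edges incident to $i_0$ or $j_0$, there is always an augmenting path along $C$ between the two newly unmatched vertices, and flipping along it yields a matching $\nu$ on $(I,J)$ in $\Sigma$ using only $(i_0,j_0)$ and cycle edges. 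Propagating the cycle equations along the sub-path of $C$ traversed by this augmenting path identifies the required constraint with the minimality inequality $\sum_\nu a_{ij} \geq \sum_{\mu_1} a_{ij}$. This produces a supporting hyperplane of $\mc S(A)$ containing $C$, so $\mc S(A)$ is not a triangulation.
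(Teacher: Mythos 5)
Your proposal is correct and follows essentially the same route as the paper's proof: part (ii) is the same observation that positive-dimensional faces of $\Pi_{d,n}(\Sigma)$ (equivalently, failure of $\Lambda(A)$ to be a matching field) correspond to some maximal minor attaining its minimum twice, and in part (i) your cycle-equation construction with trivial extension off $I\cup J$ and the chord inequality verified against the matching $\nu$ built from the chord plus cycle edges is exactly the paper's argument, where the recurrence \eqref{eq:codim 1 skeleta def y} solves the same cycle equations and the rerouted matching $\lambda''$ plays the role of your $\nu$. The only notable difference is cosmetic: you make explicit the reduction of $\lambda_1\triangle\lambda_2$ to a single balanced cycle, which the paper leaves implicit, and you justify the ``non-simplex iff contains a cycle'' step directly rather than citing \cite[Lemma 12.5]{PostPAB}.
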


If $\lambda = \{(i_1,j_1),\ldots,(i_s,j_s)\}$ is a partial matching, we denote
\[
A_\lambda = a_{i_1j_1} + \cdots + a_{i_sj_s}.
\]
Recall that the minimum of the $(I,J)$ tropical maximal minor is
not infinite and is attained twice if and only if there are
two distinct partial matchings $\lambda,\lambda'$ from $I$ to~$J$ contained in $\Sigma$
such that, for any such partial matching $\lambda''$,
it holds that $A_\lambda = A_{\lambda'} \leq A_{\lambda''}$.

Example~\ref{ex:(c) -/> (a)} is an example of a support set $\Sigma$ where the two fans described by Proposition \ref{prop:codim 1 skeleta} differ.
Example~\ref{ex:(c) -> (a) pointed} is one in which they do not.

\begin{example}\label{ex:(c) -/> (a)}
The following matrix $A(t)$ is supported on a support set, for real $t$:
\[
A(t) = \begin{pmatrix}
\infty & 0 & 0 & 0 & \infty & \infty \\
0 & \infty & 0 & \infty & 0 & \infty \\
t & 0 & \infty & \infty & \infty & 0 \\
0 & 1 & 2 & \infty & \infty & \infty
\end{pmatrix}.
\]
If $|t|<1$ then the matching multifield $\Lambda(A(t))$ is independent of~$t$;
the minimum matching on column set $J$ is always the unique one
which chooses the least entry possible in the fourth row.
However, the hyperplane arrangement $\mc H(A(t))$ undergoes
a change of combinatorial type when $t$ passes from positive to negative.
There exist points of covector $(2,3,1,1,2,3)$ only when $t>0$,
and points of covector $(3,1,2,1,2,3)$ only when $t<0$. 
In view of Proposition \ref{prop:codim 1 skeleta}, the reason why 
this happens is the existence of the matchings $\lambda = \{ (1,2), (2,3), (3,1) \}$
and $\lambda' = \{ (1,3), (2,1), (3,2) \}$ satisfying 
$A(0)_{\lambda} = A(0)_{\lambda'} \leq A(0)_{\lambda''}$ for any partial matching
$\lambda''$ on the same row and column sets, but which cannot be extended to matchings
in $\Sigma$ satisfying a similar condition. In fact, $\lambda$ and $\lambda'$ cannot 
be extended to any matching in $\Sigma$.
\end{example}

\begin{example}\label{ex:(c) -> (a) pointed}
Let $\Sigma$ be the pointed support set (see Example \ref{ex:pointed}).  If $A$ has support $\Sigma$
and the $(I,J)$ tropical minor of $A$ is not infinite, then $J$ is 
disjoint from $[d]\setminus I$ (under the identification of
the left vertices $[d]$ with a subset of the right vertices $[n]$),
otherwise a column of the $(I,J)$ submatrix would contain only infinities.
So $([d],J\cup[d]\setminus I)$ indexes a tropical maximal minor of~$A$.
The only matchings on column set $J\cup[d]\setminus I$ supported on~$A$
select the $(j,j)$ entry of each column $[d]\setminus I$,
and therefore each term in the permutation expansion of the 
$([d],J\cup[d]\setminus I)$ tropical maximal minor is a constant
plus a term of the permutation expansion of the $(I,J)$ tropical minor.
It follows that conditions (i) and (ii) of Proposition~\ref{prop:codim 1 skeleta} 
are equivalent in this case. We thus have that the injection described in 
Theorem \ref{thm:(a) -> (c)} is in fact a bijection: there is a correspondence
between coherent matching multifields supported on $\Sigma$ and 
regular subdivisions of the root polytope $\Gamma_{\Sigma}$.
This corresponds to the fact that the Newton polytope of the product
of all minors of a $d \times (n-d)$ matrix is the secondary polytope
of the product of simplices $\Delta^{d-1} \times \Delta^{n-d-1}$ (see \cite{GKZ}).
\end{example}

\begin{proof}[Proof of Proposition~\ref{prop:codim 1 skeleta}]
To prove (i), assume first that $A$ is a matrix of support $\Sigma$ contained 
in the codimension 1 skeleton of the secondary fan of $\Gamma_\Sigma$.
The regular subdivision $\mc S(A)$ it induces on $\Gamma_\Sigma$ is then not a triangulation,
so it must contain a face $F$ which is not a simplex. The set of vertices of $F$ 
corresponds to a subset $\tau \subseteq \Sigma$. It follows that there exists
$(x,y) \in \R^d \times \R^n$ such that 
\begin{equation}\label{eq:minimum}
\min_{(i,j)\in \Sigma} (x_i-y_j+a_{ij}) 
\text{ is attained precisely when $(i,j) \in \tau$}.
\end{equation}
In view of \cite[Lemma 12.5]{PostPAB},
the graph $\tau$ must contain a simple cycle 
$
C = \{(i_1, j_1), (i_2,j_1), (i_2, j_2), (i_3,j_2), 
\dotsc, (i_s,j_s), (i_1,j_s)\}. 
$
Consider the partial matchings $\lambda  = \{(i_1,j_1),\ldots,(i_s,j_s)\}$ and $\lambda' = \{(i_2,j_1),\ldots,(i_1,j_s)\}$.
It follows from \eqref{eq:minimum} that $A_\lambda = A_{\lambda'} \leq A_{\lambda''}$
for any partial matching $\lambda''$ from $I = \{i_1, \dotsc, i_s\}$ to 
$J= \{j_1, \dotsc, j_s\}$, as desired.

Conversely, suppose $A$ is a matrix supported on $\Sigma$, and
$\lambda  = \{(i_1,j_1),\ldots,(i_s,j_s)\}$ and
$\lambda' = \{(i_2,j_1),\ldots,(i_1,j_s)\}$
are distinct partial matchings jointly attaining the minimum
in some tropical minor of~$A$.
Let $x\in\R^d$ be a point defined as follows.
Adopt the convention that indices on $i$ and $j$
are taken modulo~$s$.
Choose $x_{i_1}$ arbitrarily, and for all $k=2,\ldots,s$, take 
\begin{equation}\label{eq:codim 1 skeleta def y}
x_{i_k} = x_{i_{k-1}} + a_{i_{k-1},j_{k-1}} - a_{i_k,j_{k-1}}  
\end{equation}
For $i$ not of the form $i_\ell$, take $x_i$ very large.
Let $y \in \R^n$ be defined by $y_j = \min_i (a_{ij} + x_i)$.
The minimum in \eqref{eq:minimum} is then equal to 0;
denote $\tau \subseteq \Sigma$ the subset where it is attained.
For any $k\in [s]$ and $i$ distinct from $i_k$, we claim that
\[x_i \geq x_{i_k} + a_{i_k,j_k} - a_{i,j_k} ,\]
so $(i_k,j_k) \in \tau$ and therefore $(i_{k+1},j_k) \in \tau$ too.
If $i$ is not of the form $i_\ell$ this is clear, so suppose $i=i_\ell$
with indexing modulo~$s$ so that $\ell>k$.  By summing inequalities
of type \eqref{eq:codim 1 skeleta def y} we have
\[x_{i_\ell} = x_{i_k} 
+ (a_{i_k,j_k} - a_{i_{k+1},j_k} ) + \cdots 
+ (a_{i_{\ell-1},j_{\ell-1}} - a_{i_\ell,j_{\ell-1}}),\]
so the claim follows as long as the sum of entries of $a$ on the right hand side
is greater than or equal to $a_{i_k,j_k} - a_{i_\ell,j_k}$.
But this is so because the partial matching $\lambda''$ obtained from
$\lambda'$ by replacing the edges $(i_{k+1},j_k)$, \ldots, $(i_\ell,j_{\ell-1})$
with $(i_{k+1},j_{k+1})$, \ldots, $(i_{\ell-1},j_{\ell-1})$, $(i_\ell,j_k)$
satisfies $A_{\lambda''}\geq A_{\lambda'}$.

The subset $\tau \subseteq \Sigma$ contains then both $\lambda$ and $\lambda'$,
so it contains the cycle $\lambda \cup \lambda'$. Again, by
\cite[Lemma 12.5]{PostPAB}, the face $F$ of $\mc S(A)$ it corresponds to 
is not a simplex. The subdivision $\mc S(A)$ is thus not a triangulation,
so $A$ is in the codimension 1 skeleton of the secondary fan of $\Gamma_{\Sigma}$.

To prove (ii), note that $A$ is in the support of the codimension 1 skeleton of the
normal fan of $\Pi_{d,n}(\Sigma)$ if and only if the matching multifield $\Lambda(A)$
is not a matching field, in which case the result is clear.
\end{proof}

\section{Combinatorics of Stiefel tropical linear spaces}\label{sec:Stiefel TLS combinatorics}

In this section we study the combinatorial structure of Stiefel tropical linear spaces.
We prove that their associated dual matroid subdivisions
correspond to ``regular transversal matroid subdivisions''. We also investigate the
connection between these matroid subdivisions and coherent matching multifields.

\subsection{Transversal matroid polytopes}\label{sec:transversal_polytopes}
We start by briefly studying certain inequality descriptions of matroid polytopes of transversal matroids. 
A general inequality description for a matroid polytope is well known \cite{Edmonds}
(see for example \cite{Welsh}):
$\Gamma_M$ is the set of $x\in\R^n$ satisfying the inequalities
$\sum_{j\in J} x_j\leq r_M(J)$ for each $J\subseteq[n]$, 
and the equality $\sum_{j\in[n]} x_j = r_M([n])$,
where $r_M:2^{[n]}\to\mathbb Z$ is the rank function of the matroid~$M$.
Proposition~\ref{r:transversalpolytope} gives a variant description.

Suppose $G \subseteq [d] \times [n]$ is a bipartite graph on the set of vertices $[d] \amalg [n]$,
and let $M= M(G)$ be its associated rank $d$ transversal matroid on the ground set $[n]$.
Recall that for $I \subseteq [d]$, we use the notation $J_I = J_I(G) = \{ j \in [n] : (i,j) \in G \text{ for some } i \in I \}$.

\begin{proposition}\label{r:transversalpolytope}
 The matroid polytope $\Gamma_M$ of the transversal matroid $M = M(G)$ can be described as the set of $x \in \R^n$ satisfying the inequalities:
\begin{align}
 x_1 + x_2 + \dotsb + x_n \, &= \, d, \label{eq:rank}\\
 0 \, \leq x_j \, &\leq \, 1 &\quad \text{ for all $j \in [n]$},\\
 \sum_{j \in J_I} x_j &\geq \, |I| &\quad \text{ for all $I \subseteq [d]$}. \label{eq:trans}
\end{align}
\end{proposition}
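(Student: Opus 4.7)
My plan is to prove the two inclusions separately, writing $P$ for the polytope defined by the inequalities in the statement. The containment $\Gamma_M\subseteq P$ is the easy direction: since $\Gamma_M$ is the convex hull of the indicator vectors $e_B=\sum_{j\in B}e_j$ for bases $B$ of $M$, and the defining constraints of $P$ are linear, it is enough to check that each $e_B$ lies in $P$. The rank and nonnegativity conditions are immediate, and for the Hall-type inequality we use that, by definition of $M=M(G)$, there exists a matching $\lambda\subseteq G$ with left vertex set $[d]$ and right vertex set $B$. For any $I\subseteq[d]$, the restriction of $\lambda$ to $I$ matches $I$ injectively into $B\cap J_I$, which forces $|B\cap J_I|\geq|I|$, i.e.\ $\sum_{j\in J_I}(e_B)_j\geq|I|$.

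For the reverse inclusion $P\subseteq\Gamma_M$, I would invoke Edmonds's description of the matroid polytope as the set cut out by $x\geq 0$, $\sum_j x_j=d$, and the rank inequalities $\sum_{j\in S}x_j\leq r_M(S)$ for every $S\subseteq[n]$. The first two families are already among the defining inequalities of $P$, so it suffices to derive every rank inequality from those of $P$. The key tool is the K\"onig-theoretic rank formula for transversal matroids
\[ r_M(S)\;=\;\min_{I\subseteq[d]}\bigl((d-|I|)+|J_I\cap S|\bigr), \]
obtained by applying K\"onig's theorem to the bipartite subgraph of $G$ on vertex set $[d]\cup S$; the minimum vertex cover of this subgraph can be taken to be $([d]\setminus I)\cup(J_I\cap S)$ for an $I$ achieving the minimum above. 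Granting this, fix $S$ and any $I$, and split
\[ \sum_{j\in S}x_j\;=\;\sum_{j\in S\cap J_I}x_j+\sum_{j\in S\setminus J_I}x_j. \]
The first summand is at most $|S\cap J_I|$ by the bound $x_j\leq 1$, and the second is at most $\sum_{j\notin J_I}x_j=d-\sum_{j\in J_I}x_j\leq d-|I|$ after combining the Hall-type inequality \eqref{eq:trans} with the rank equation \eqref{eq:rank}. Summing and then minimizing over $I$ yields $\sum_{j\in S}x_j\leq r_M(S)$, as required.

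The degenerate case in which $M$ has no bases (so $\Gamma_M=\emptyset$) is handled consistently: by Hall's theorem there exists some $I$ with $|J_I|<|I|$, and then $x_j\leq 1$ together with \eqref{eq:trans} force $|I|\leq\sum_{j\in J_I}x_j\leq|J_I|<|I|$, so $P$ is empty as well. The only nontrivial ingredient is the K\"onig-type rank formula, which is classical and could equally well be cited; apart from that, the entire argument reduces to routine linear-programming bookkeeping.
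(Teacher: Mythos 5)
Your argument is correct and follows essentially the same route as the paper: check the basis indicator vectors for the easy inclusion, and for the reverse inclusion combine the $x_j\le 1$ and Hall-type inequalities to derive $\sum_{j\in S}x_j\le\min_{I\subseteq[d]}\bigl(|S\cap J_I|+d-|I|\bigr)=r_M(S)$ (the transversal-matroid rank formula, which the paper cites from Oxley rather than re-deriving via K\"onig), then invoke the rank-function description of the matroid polytope, with the empty-matroid case disposed of by Hall's theorem exactly as in the paper.
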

\begin{proof}
Let $Q$ be the polytope described by the inequalities given above. If the matroid $M$ is the empty matroid,
Hall's Marriage Theorem implies that there exists $I \subseteq [d]$ such that $|J_I| < |I|$, so the polytope
$Q$ is empty. Suppose now that the matroid $M$ has at least one basis. All vertices of the matroid
polytope $\Gamma_M$ are in $Q$, so we have $\Gamma_M \subseteq Q$. For the reverse inclusion, assume $x \in Q$.
The definition of $Q$ implies that for any $A \subseteq [n]$ and $I \subseteq [d]$, we have the following inequalities:
\begin{align*}
\sum_{j \in [n] \setminus J_I} x_j &\leq d - |I|,\\
\sum_{j \in A \cap J_I} x_j &\leq |A \cap J_I|,\\
\sum_{j \in [n] \setminus (A \cup J_I)} -x_j &\leq 0.
\end{align*}
Adding all these together we get
\[
 \sum_{j \in A} x_j \leq |A \cap J_I| + d - |I|.
\]
Since $I$ was arbitrary, we conclude that
\begin{equation}\label{eq:ranktransversal}
 \sum_{j \in A} x_j \leq \min_{I \subseteq [d]} \left( |A \cap J_I| + d - |I| \right).
\end{equation}
The rank of the subset $A$ in the transversal matroid $M$ is given precisely by the right hand side of Inequality \eqref{eq:ranktransversal} \cite[Proposition 12.2.6]{Oxley}, so we have that $\sum_{j \in A} x_j \leq r_M(A)$. This shows that $x$ satisfies the inequality description stated above 
of the matroid polytope $\Gamma_M$ in terms of $r_M$, completing the proof.
\end{proof}

\begin{corollary}\label{r:transversalcorollary}
 Consider the polytope $P_G = \sum_{i \in [d]} \Delta_{J_i} \subseteq d \cdot \Delta_{[n]}$. Then the matroid polytope $\Gamma_M$ of the transversal matroid $M = M(G)$ is equal to the intersection of $P_G$ with the hypersimplex $\Delta_{d,n} \subseteq d \cdot \Delta_{[n]}$.
\end{corollary}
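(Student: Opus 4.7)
The plan is to verify the two inclusions $\Gamma_M \subseteq P_G \cap \Delta_{d,n}$ and $P_G \cap \Delta_{d,n} \subseteq \Gamma_M$ directly, by combining the inequality description of $\Gamma_M$ provided by Proposition \ref{r:transversalpolytope} with the Minkowski structure of $P_G$.

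For the forward inclusion I would argue at the level of vertices of $\Gamma_M$. Every vertex of $\Gamma_M$ is an indicator vector $e_B$ of a basis $B$ of the transversal matroid $M=M(G)$, and by definition such a $B$ admits a matching $\{(i,j_i):i\in[d]\}\subseteq G$ with $B=\{j_1,\ldots,j_d\}$. Then $e_B=\sum_{i\in[d]} e_{j_i}$ with $e_{j_i}\in\Delta_{J_i}$, so $e_B\in P_G$; since $|B|=d$, also $e_B\in\Delta_{d,n}$. Convex combinations then give $\Gamma_M\subseteq P_G\cap\Delta_{d,n}$. (If $M$ has no bases then $\Gamma_M=\emptyset$ and the inclusion is vacuous.)

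For the reverse inclusion I would take an arbitrary $x\in P_G\cap\Delta_{d,n}$, choose a Minkowski decomposition $x=\sum_{i\in[d]} x^{(i)}$ with $x^{(i)}\in\Delta_{J_i}$, and check that $x$ satisfies the inequalities \eqref{eq:rank}--\eqref{eq:trans} of Proposition \ref{r:transversalpolytope}. The equality $x_1+\cdots+x_n=d$ follows from $\sum_j x^{(i)}_j=1$ for every $i$, and $0\leq x_j\leq 1$ is inherited from $\Delta_{d,n}$. The crucial step is \eqref{eq:trans}: for $I\subseteq[d]$ and $i\in I$, the point $x^{(i)}$ is supported on $J_i\subseteq J_I$, so $\sum_{j\in J_I} x^{(i)}_j=1$; summing over $i\in I$ and discarding the nonnegative contributions from $i\notin I$ yields $\sum_{j\in J_I} x_j\geq |I|$, as required.

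The only delicate point is the degenerate case in which $M$ has no bases, so that $\Gamma_M=\emptyset$; here one must check that $P_G\cap\Delta_{d,n}$ is empty as well. By Hall's theorem there is some $I\subseteq[d]$ with $|J_I|<|I|$, and the argument of the previous paragraph would force any point of the intersection to satisfy $|I|\leq\sum_{j\in J_I} x_j\leq |J_I|$, a contradiction. This makes the two descriptions match on the nose, and no further genericity or triangulation arguments are needed; the only mild subtlety I anticipate is being careful that the Minkowski decomposition of a point in $P_G$ always exists (which is immediate from the definition of Minkowski sum).
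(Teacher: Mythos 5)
Your argument is correct, and it takes a somewhat different route from the paper's. The paper proves this corollary in one stroke by quoting Postnikov's facet description of the Minkowski sum $P_G$ (namely $\sum_{j\in X} x_j \geq |\{i : J_i\subseteq X\}|$ for all $X\subseteq[n]$, inside the hyperplane $\sum_j x_j = d$), intersecting with the inequality description of the hypersimplex, and matching the result against the inequalities of Proposition~\ref{r:transversalpolytope}; this is brief but leans on the cited description of $P_G$ and leaves to the reader the small check that the two families of inequalities agree within $\Delta_{d,n}$ (take $X=J_I$ in one direction, $I=\{i:J_i\subseteq X\}$ in the other). You instead avoid any facet description of $P_G$: the inclusion $\Gamma_M\subseteq P_G\cap\Delta_{d,n}$ comes directly from writing each basis indicator vector as a sum of vertices $e_{j_i}\in\Delta_{J_i}$ along a matching, and the reverse inclusion comes from an arbitrary Minkowski decomposition $x=\sum_i x^{(i)}$, $x^{(i)}\in\Delta_{J_i}$, which immediately yields \eqref{eq:rank}--\eqref{eq:trans} and hence $x\in\Gamma_M$ by Proposition~\ref{r:transversalpolytope}. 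This is more elementary and self-contained (it uses only the definition of Minkowski sum and only the ``$Q\subseteq\Gamma_M$'' direction of Proposition~\ref{r:transversalpolytope}), at the cost of being slightly longer; your separate treatment of the basis-free case is in fact redundant, since Proposition~\ref{r:transversalpolytope} already asserts $\Gamma_M=Q$ including when $M$ has no bases, so your reverse-inclusion computation alone forces $P_G\cap\Delta_{d,n}=\emptyset$ there, but the extra check does no harm.
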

\begin{proof}
 Inside the hyperplane $\sum_{j \in [n]} x_j = d$, the hypersimplex $\Delta_{d,n}$ is described by the inequalities $0 \leq x_j \leq 1$ for all $j \in [n]$, and the polytope $P_G$ is described by the inequalities $\sum_{j \in X} x_j \geq \left|\{i \in [d] : J_i \subseteq X \} \right|$ for all $X \subseteq [n]$ \cite[Proposition 6.3]{PostPAB}. The result follows from Proposition \ref{r:transversalpolytope}.
\end{proof}

\subsection{Regular transversal matroid subdivisions}
We now describe the combinatorics of Stiefel tropical linear spaces 
in terms of the combinatorial type of $\mc H(A^{\rm t})$ 
(or equivalently, $\mc H(A)$ --- see Remark \ref{rem:transpose_arr}),
and show that the associated matroid subdivisions are 
regular transversal matroid subdivisions.

Consider a matrix $A = (a_{ij}) \in\Rinf^{d\times n}$, 
and assume its support $\Sigma \subseteq [d] \times [n]$ contains a matching.
As described in Section \ref{sec:hyperplanes}, 
the \emph{rows} of the matrix $A$ give rise to a hyperplane arrangement $\mc H(A^{\rm t})$. 
The combinatorial structure of $\mc H(A^{\rm t})$ 
is dual to the associated mixed subdivision $\mc M(A^{\rm t})$ 
of the Minkowski sum of simplices 
\[P_{\Sigma} = \sum_{i=1}^d \Delta_{J_i(\Sigma)} \subseteq d \cdot \Delta_{[n]},\] 
where $J_i(\Sigma) = \{ j \in [n] : a_{ij} \neq \infty\}$. 
Let $p = \pi(A)$ be the tropical Pl\"ucker vector 
obtained by applying the Stiefel map to $A$. 
The vector $p$ corresponds to a tropical linear space $L(A)$, 
whose combinatorial structure is given by the corresponding 
regular matroid subdivision $\mc D(A)$ of the polytope
$\Gamma_p= \conv \{ e_J : p_J \neq \infty \}$.

\begin{theorem}\label{r:restrictsubdiv} 
The matroid subdivision $\mc D(A)$ of the
underlying matroid polytope $\Gamma_p$ is obtained 
by restricting the mixed subdivision $\mc M(A^{\rm t})$ 
of $P_{\Sigma} \subseteq d \cdot \Delta_{[n]}$ to 
the hypersimplex $\Delta_{d,n} \subseteq d \cdot \Delta_{[n]}$.
\end{theorem}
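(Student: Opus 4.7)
The plan is to show that both subdivisions are regular subdivisions of the same ambient polytope $\Gamma_p$, with matching lifting heights at each vertex $e_J$.

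First I identify the common ambient polytope. Applying Corollary~\ref{r:transversalcorollary} to $\Sigma$ gives $P_\Sigma\cap\Delta_{d,n}=\Gamma_{M(\Sigma)}$; since $p_J=\pi(A)_J$ is finite precisely when $\Sigma$ contains a matching from $[d]$ to $J$---that is, when $J$ is a basis of the transversal matroid $M(\Sigma)$---the Pl\"ucker polytope $\Gamma_p$ coincides with $\Gamma_{M(\Sigma)}$. Moreover, applying the same corollary to any bipartite graph $\tau\subseteq\Sigma$ gives $\sum_i\Delta_{J_i(\tau)}\cap\Delta_{d,n}=\Gamma_{M(\tau)}$, and by Proposition~\ref{r:dual mixed} (applied to $A^{\rm t}$) every cell of $\mc M(A^{\rm t})$ has the form $\sum_i\Delta_{J_i(\tau)}$ for some such $\tau$. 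Hence every cell of the restricted subdivision $\mc M(A^{\rm t})\cap\Delta_{d,n}$ is the matroid polytope of a transversal matroid, and in particular all its vertices lie among the points $e_J$ for $J$ a basis of $M(\Sigma)$.

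The mixed subdivision $\mc M(A^{\rm t})$ is induced on $P_\Sigma$ by the piecewise-linear lifting function
\[\hat h(x)=\min\left\{\sum_{i\in[d]}\sum_{j\in J_i(\Sigma)}a_{ij}\,x^{(i)}_j \;:\; x=\sum_{i\in[d]}x^{(i)},\,x^{(i)}\in\Delta_{J_i(\Sigma)}\right\},\]
obtained by lifting the vertex $e_j$ of $\Delta_{J_i(\Sigma)}$ to height $a_{ij}$ and taking the min-plus convolution across summands. Since all vertices of $\mc M(A^{\rm t})\cap\Delta_{d,n}$ are of the form $e_J$, this restricted subdivision is the regular subdivision of $\Gamma_p$ induced by the heights $\hat h(e_J)$. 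The central claim is $\hat h(e_J)=p_J$ for every basis $J$ of $M(\Sigma)$: any decomposition $e_J=\sum_i x^{(i)}$ with $x^{(i)}\in\Delta_{J_i(\Sigma)}$ must have each $x^{(i)}$ supported on $J_i(\Sigma)\cap J$, and the feasible set is then a transportation polytope with all row and column sums equal to $1$. Its vertices are exactly the $0/1$ matrices of matchings from $[d]$ to $J$ contained in $\Sigma$, so the linear objective attains its minimum at such a matching:
\[\hat h(e_J)=\min_\lambda\sum_{(i,j)\in\lambda}a_{ij}=\pi(A)_J=p_J,\]
with $\lambda$ ranging over matchings from $[d]$ to $J$ in $\Sigma$. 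Both subdivisions are therefore regular subdivisions of $\Gamma_p$ induced by identical heights at all vertices, and hence coincide.

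The main obstacle is the identification $\hat h(e_J)=p_J$: recasting the min-plus convolution defining $\hat h$ as a transportation linear program and recognizing its extreme points as matchings is what bridges the mixed-subdivision viewpoint on $\mc M(A^{\rm t})$ with the permutation-expansion definition of the tropical maximal minors underlying $\mc D(A)$.
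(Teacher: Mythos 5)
Your reduction of everything to the identity $\hat h(e_J)=p_J$ is fine: the transportation-polytope/matching computation is correct, and it is essentially the same calculation the paper performs (there it appears as the evaluation of $p_B-\sum_{j\in B}y_j$ over matchings). The identification $\Gamma_p=P_\Sigma\cap\Delta_{d,n}$ and the fact that cells of the restriction are transversal matroid polytopes, via Corollary~\ref{r:transversalcorollary}, are also as in the paper. The problem is the pivotal sentence ``Since all vertices of $\mc M(A^{\rm t})\cap\Delta_{d,n}$ are of the form $e_J$, this restricted subdivision is the regular subdivision of $\Gamma_p$ induced by the heights $\hat h(e_J)$.'' That inference is not valid as stated. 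What the vertex statement (together with affineness of $\hat h$ on each restricted cell) gives you is that $\hat h|_{\Gamma_p}$ equals the lower envelope of the heights $p_J$, hence that every restricted cell lies inside a cell of $\mc D(A)$, i.e.\ the restricted subdivision \emph{refines} $\mc D(A)$. It does not give equality: a subdivision can be strictly finer than the regular subdivision induced by its vertex heights even though the lifted function is affine on every one of its cells (split any cell of $\mc D(A)$ along a diagonal and the same two properties hold). Concretely, the danger here is that two distinct maximal mixed cells of $\mc M(A^{\rm t})$, whose affine pieces $h_1\neq h_2$ differ on $P_\Sigma$, could a priori have restrictions to the affine hull of $\Gamma_p$ that coincide --- note $\dim\Gamma_p$ can be strictly smaller than $\dim P_\Sigma$ when $M(\Sigma)$ is disconnected --- in which case $\mc D(A)$ would merge their traces while the restricted complex keeps them apart, and your ``identical heights, hence coincide'' conclusion would fail.

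To close the gap you must prove the reverse inclusion: that no cell of $\mc D(A)$ properly contains a restricted cell. One way is to show that if $h_1=h_2$ on the affine hull of $\Gamma_p$ then in fact $Q_1\cap\Gamma_p=Q_2\cap\Gamma_p$ (using that each maximal mixed cell $Q_i$ is exactly the locus $\{\hat h=h_i\}$ in $P_\Sigma$), together with a covering/dimension argument identifying each maximal cell of $\mc D(A)$ with a single full-dimensional restricted cell. The paper avoids this issue by arguing dually and exactly: for $(x,y)$ certifying a mixed cell with graph $G$, it computes that $\min_B\bigl(p_B-\sum_{j\in B}y_j\bigr)$ is attained \emph{precisely} at the bases of the transversal matroid $M(G)$, so the face of $\mc D(A)$ selected by $y$ \emph{equals} $Q\cap\Delta_{d,n}=\Gamma_{M(G)}$, not merely contains it; combined with Corollary~\ref{r:transversalcorollary} this yields both inclusions at once. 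Your approach can be repaired along these lines, but as written the equality of the two subdivisions --- which is the actual content of the theorem --- is asserted rather than proved.
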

\begin{remark}
It is not true in general that all the faces of $\mc D(A)$ are obtained by intersecting a face of $\mc M(A^{\rm t})$ with $\Delta_{d,n}$, as this could result in a collection of polytopes that is not closed under taking faces. The subdivision $\mc D(A)$ is obtained by taking all possible intersections between a face of $\mc M(A^{\rm t})$ and \emph{a face of} $\Delta_{d,n}$.
\end{remark}
\begin{proof}[Proof of Theorem \ref{r:restrictsubdiv}]
Suppose $Q$ is a cell in the mixed subdivision $\mc M(A^{\rm t})$ indexed by a subgraph
$G \subseteq \Sigma$. It follows that there exists $(x,y) \in \R^d \times \R^n$ such that
$\min_{(i,j) \in \Sigma} ( x_i - y_j + a_{ij} )$ is achieved precisely when $(i,j) \in G$.
Now, assume that $G$ contains a matching of size $d$. The face in the subdivision $\mc D(A)$
corresponding to the vector $y \in \R^n$ is the matroid polytope whose vertices are the indicator
vectors of the subsets $B \in \binom{[n]}{d}$ for which $p_B - \sum_{j \in B} y_j$ is minimal.
Moreover, for any $B$ we have
\begin{align*}
p_B - \sum_{j \in B} y_j &= \min \Bigg\{ \sum_{(i,j) \in \lambda} a_{i j} : \lambda \text{ is a matching on } B \Bigg\} - \sum_{j \in B} y_j \\
&= \min \Bigg\{ \sum_{(i,j) \in \lambda} a_{i j} - y_{j} : \lambda \text{ is a matching on } B \Bigg\} \\
&= \min \Bigg\{ \sum_{(i,j) \in \lambda} a_{i j} - y_{j} + x_i : \lambda \text{ is a matching on } B \Bigg\} - \sum_{i \in [d]} x_i,
\end{align*}
so $\min_B (p_B - \sum_{j \in B} y_j)$ is achieved precisely when $B$ is a subset such that $G$
contains a matching from $[d]$ to $B$. In other words, the face in $\mc D(A)$ selected by $y \in \R^n$
is the matroid polytope of the rank $d$ transversal matroid $M(G)$ associated to the graph $G$. We have thus shown
that for every face $Q$ in $\mc M(A^{\rm t})$ indexed by a subgraph $G \subseteq \Sigma$ containing a maximal matching, 
the matroid polytope of the transversal matroid $M(G)$ is a face of the subdivision $\mc D(A)$.
Note that this also holds for any face $Q$ indexed by a subgraph $G$ that does not contain a maximal matching, in which case
the matroid polytope $M(G)$ is empty.

Now, in view of Corollary \ref{r:transversalcorollary}, the intersection of the polytope $P_{\Sigma}$
with the hypersimplex $\Delta_{d,n}$ is precisely the matroid polytope $\Gamma_p$, and moreover, the intersection
of any face in the mixed subdivision $\mc M(A^{\rm t})$ with $\Delta_{d,n}$ is a face of the matroid subdivision
$\mc D(A)$. Since $P_{\Sigma}$ is completely covered by the faces in the subdivision $\mc M(A^{\rm t})$, 
it follows that all interior faces of the subdivision $\mc D(A)$ are obtained by intersecting a face
of $\mc M(A^{\rm t})$ with $\Delta_{d,n}$, as desired.
\end{proof}

The first part of the proof of Theorem \ref{r:restrictsubdiv} will be useful
for us later, so we record it in a separate proposition.
\begin{proposition}\label{r:transversal}
Let $y\in\R^n$, and 
let $G \subseteq [d] \times [n]$ be the (transpose) 
tropical covector of $y$ in $\mc H(A^{\rm t})$, that is,
\begin{equation}\label{eq:transpose covector}
G = \tc(y)^{\rm t} = \left\{ (i,j_0) \in [d] \times [n] : a_{ij_0}+y_{j_0} = \min_j(a_{ij}+y_j)\right\}.
\end{equation}
If $G$ contains a matching, then
the matroid $M$ selected by $y$ in the regular subdivision $\mc D(A)$
is the transversal matroid $M(G)$ of $G$.
\end{proposition}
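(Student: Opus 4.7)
The plan is to rewrite the optimization that defines the matroid $M$ selected by $y$ as a minimization over matchings, bound it below row-by-row, and observe that the row-wise lower bound is attained precisely by matchings contained in $G$.  This is essentially the computation already appearing inside the proof of Theorem~\ref{r:restrictsubdiv}, now isolated.

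In detail, the bases of $M$ are by definition the $d$-subsets $B\subseteq[n]$ minimizing the functional $p_B+\sum_{j\in B}y_j$, where $p=\pi(A)$ (applying the convention from the introduction that $\face_y$ is the projection of $\face_{(y,1)}\widehat{\Gamma}_p$; the opposite sign would be handled by replacing $y$ with $-y$ throughout).  First I would expand $p_B=\min_\lambda\sum_{(i,j)\in\lambda}a_{ij}$ with $\lambda$ ranging over matchings from $[d]$ to $B$, and use $\sum_{j\in B}y_j=\sum_{(i,j)\in\lambda}y_j$ (valid for any such $\lambda$) to obtain
\[
p_B+\sum_{j\in B}y_j \;=\; \min_{\lambda}\ \sum_{(i,j)\in\lambda}(a_{ij}+y_j),
\]
with $\lambda$ again ranging over matchings from $[d]$ to $B$.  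Setting $c_i:=\min_j(a_{ij}+y_j)$, every matching uses exactly one edge incident to each $i\in[d]$, so the inner sum is at least $\sum_{i\in[d]} c_i$, with equality if and only if every edge of $\lambda$ lies in $G$.

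Since $G$ is assumed to contain a matching, this lower bound $\sum_i c_i$ is attained, so it is the minimum value of $p_B+\sum_{j\in B}y_j$ over all $d$-subsets $B$.  The subsets $B$ realizing this minimum are exactly those for which $G$ contains a matching from $[d]$ onto $B$, and by definition these are the bases of the transversal matroid $M(G)$, giving $M=M(G)$.  There is no substantive obstacle; the argument is a direct calculation resting on the fact that minimizing a sum of $(a_{ij}+y_j)$-values over matchings reduces to a row-by-row minimization, whose row-wise minimizers collectively form $G$.
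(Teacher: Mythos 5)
Your proof is correct and is essentially the paper's own argument: the paper explicitly records this proposition as the first part of the proof of Theorem~\ref{r:restrictsubdiv}, where the same rewriting of $p_B$ plus the $y$-term as a minimum over matchings is carried out, with your row minima $c_i$ playing the role of the auxiliary vector $x$ there. Your parenthetical handling of the sign convention (using $\face_{(y,1)}$ versus the $(-y,1)$ convention of Section~3, i.e.\ replacing $y$ by $-y$) is the right way to reconcile the covector formula $a_{ij}+y_j$ in the statement with the definition of the matroid selected by $y$, and does not affect the substance of the argument.
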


The following corollary shows that the matroid subdivisions
dual to Stiefel tropical linear spaces can be
thought of as {\em regular transversal matroid subdivisions}.

\begin{corollary}\label{r:reg_transversal}
The facets of the matroid subdivision $\mc D(A)$ are the matroid polytopes of 
the transversal matroids $M(\tau)$ associated to maximal covectors $\tau \in \TC(A)$.
\end{corollary}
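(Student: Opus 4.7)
The plan is to combine Theorem~\ref{r:restrictsubdiv}, Proposition~\ref{r:dual mixed} applied to $A^{\rm t}$, Proposition~\ref{r:transversal}, and Remark~\ref{rem:transpose_arr}. By Theorem~\ref{r:restrictsubdiv}, every facet of $\mc D(A)$ arises as $F \cap \Delta_{d,n}$ for some cell $F$ of the mixed subdivision $\mc M(A^{\rm t})$. Proposition~\ref{r:dual mixed}, applied to $A^{\rm t}$, identifies the top-dimensional cells of $\mc M(A^{\rm t})$ with the vertices of $\mc H(A^{\rm t})$, i.e.\ with the faces labelled by the maximal covectors in $\TC(A^{\rm t})$. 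For such a vertex $y$ with transposed covector $\tau = \tc(y)^{\rm t}$, Proposition~\ref{r:transversal} identifies the matroid selected by $y$ in $\mc D(A)$ as the transversal matroid $M(\tau)$, so the corresponding facet of $\mc D(A)$ is $\Gamma_{M(\tau)}$. Conversely, any proper subcovector $\tau' \subsetneq \tau$ gives $M(\tau')$ a strictly smaller collection of bases, hence $\Gamma_{M(\tau')} \subsetneq \Gamma_{M(\tau)}$, so non-maximal covectors cannot contribute facets.

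To transfer the description from $\mc H(A^{\rm t})$ to $\mc H(A)$ as in the statement, I appeal to Remark~\ref{rem:transpose_arr}: covectors of the two arrangements having no isolated vertices coincide under transposition. The observation required is that every vertex of $\mc H(A)$ has covector with no isolated vertex---if some $i \in [d]$ had degree zero at a vertex $x$, then $x_i$ could be raised freely without changing the covector, contradicting the $0$-dimensionality of $x$, and the right-vertex condition is automatic since no column of $A$ is identically $\infty$. The same argument applies to $\mc H(A^{\rm t})$. Therefore the maximal covectors of $\mc H(A)$ and $\mc H(A^{\rm t})$ are put in natural bijection by Remark~\ref{rem:transpose_arr}, so the collections of transversal matroids $\{M(\tau)\}$ agree.

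The main delicate step is making rigorous the dimensional claim underlying the first paragraph: a facet of $\mc D(A)$ must come from a top-dimensional cell of $\mc M(A^{\rm t})$, and, conversely, $\Gamma_{M(\tau)}$ must itself be top-dimensional inside $\Gamma_p$ for each maximal $\tau$. This should follow from Corollary~\ref{r:transversalcorollary}, which expresses $\Gamma_{M(\tau)}$ as the intersection of the Minkowski sum of simplices $\sum_i \Delta_{J_i(\tau)}$ with the hypersimplex $\Delta_{d,n}$, together with the monotonicity of $M(\tau)$ in $\tau$ noted above; this lets one compare the dimensions of $\Gamma_{M(\tau)}$ and $\Gamma_p$ by tracking how adding edges to $\tau$ enlarges the corresponding Minkowski sum.
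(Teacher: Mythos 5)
Your proposal follows essentially the same route as the paper: the published proof is precisely the combination of Theorem~\ref{r:restrictsubdiv} with Corollary~\ref{r:transversalcorollary} (whose content you re-derive through Propositions~\ref{r:dual mixed} and~\ref{r:transversal}), followed by the transposition step of Remark~\ref{rem:transpose_arr}.

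One assertion in your argument is false, though it is inessential. You claim that $\tau' \subsetneq \tau$ forces $M(\tau')$ to have strictly fewer bases than $M(\tau)$, hence $\Gamma_{M(\tau')} \subsetneq \Gamma_{M(\tau)}$. Distinct bipartite graphs, even nested ones occurring as covectors of the same arrangement, can present the same transversal matroid: for instance with $d=n=2$ the graphs $\{(1,1),(2,2)\}$ and $\{(1,1),(1,2),(2,2)\}$ both give the matroid with unique basis $\{1,2\}$, and this ambiguity is exactly what the paper must later control via \cite[Theorem 3]{BondyWelsh} in the proof of Theorem~\ref{r:bounded}. Fortunately only the weak inclusion is needed here: $\tau' \subseteq \tau$ gives $\Gamma_{M(\tau')} \subseteq \Gamma_{M(\tau)}$, so a facet of $\mc D(A)$, being a maximal cell, already coincides with $\Gamma_{M(\tau)}$ for some maximal covector $\tau$; strictness is neither true nor required. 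The dimension bookkeeping you flag as the delicate step (that maximal covectors conversely yield facets, modulo covectors whose transversal matroid polytope misses or degenerates in the hypersimplex) is treated no more explicitly in the paper, whose proof consists of two sentences citing the same three ingredients.
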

\begin{proof}
Theorem \ref{r:restrictsubdiv} and Corollary \ref{r:transversalcorollary} imply 
that facets of $\mc D(A)$ correspond to the transversal matroids of (transpose) 
maximal covectors in $\TC(A^{\rm t})$. The result follows from Remark \ref{rem:transpose_arr}.
\end{proof}

\begin{example}
Suppose $\Sigma$ is the pointed support set 
(see Examples \ref{ex:pointed} and \ref{ex:(c) -> (a) pointed}). 
Full dimensional subpolytopes of the root polytope $\Gamma_\Sigma$ 
must contain all the vertices of the form $(e_i,-e_j)$ with $i,j \in [d]$.
This implies that for any matrix $A$ supported on $\Sigma$, all facets
of the mixed subdivision $\mc M(A^{\rm t})$ contain the point
$e_1 + \dotsb + e_d \in d \cdot \Delta_{[n]}$. 
In view of Theorem \ref{r:restrictsubdiv}, 
it follows that all facets of the matroid subdivision $\mc D(A)$
contain the vertex $e_1 + \dotsb + e_d$ of $\Delta_{d,n}$,
as was proved in \cite{HJS, LTLS}. Matroid subdivisions
satisfying this property are called conical matroid subdivisions in \cite{LTLS}.
\end{example}

\subsection{Stiefel tropical linear spaces and matching multifields}

We now show that the combinatorial structures
of Stiefel tropical linear spaces and their
corresponding coherent matching multifields are in bijection
in the support set case, thereby providing a combinatorial object
which labels the faces of the Stiefel image.

\begin{theorem}\label{thm:(b) <-> (c)}
Let $\Sigma$ be a support set.
There is a bijection between 
matroid subdivisions in the image of $\pi_\Sigma$
and coherent matching multifields supported on $\Sigma$, under which
$\mc D(A)$ maps to $\Lambda(A)$
for each matrix $A$ of support $\Sigma$.
\end{theorem}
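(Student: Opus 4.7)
My plan is to realise the bijection by comparing two natural fan structures on the ambient space $\R^\Sigma$ of matrices supported on~$\Sigma$. By Proposition~\ref{r:bij iii iv}, coherent matching multifields supported on~$\Sigma$ are indexed by the faces of the Newton polytope $\Pi_{d,n}(\Sigma)$, equivalently by the cones of its normal fan $\mc N$. Similarly, the matroid subdivisions $\mc D(A)$ arising from matrices $A\in\R^\Sigma$ index the cones of the pullback via the Stiefel map $\pi$ of the secondary fan $\mc S$ of the hypersimplex $\Gamma_{d,n}$. The theorem thus reduces to the claim that $\mc N$ and $\pi^{-1}(\mc S)$ partition $\R^\Sigma$ into the same cones, with $\Lambda(A)$ and $\mc D(A)$ being two labels for the same cone containing~$A$.

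For the implication $\Lambda(A)=\Lambda(A')\Rightarrow\mc D(A)=\mc D(A')$: on the relative interior of a cone $C$ of $\mc N$ the matching multifield is a matching field $\Lambda=\{\lambda_J\}_J$, so $\pi(A)_J=A_{\lambda_J}$ is linear in~$A$. In the three-term tropical Pl\"ucker relation indexed by $(S,T)$ with $|S|=d-1$, $|T|=d+1$, the values $A_{\lambda_{S\cup j}}+A_{\lambda_{T\setminus j}}$ for $j\in T\setminus S$ split into equivalence classes according to the multiset $\lambda_{S\cup j}\cup\lambda_{T\setminus j}$; within a class the values coincide identically on~$C$, while between classes the ordering is forced by the defining inequalities of~$C$, since each $\lambda_J$ realises the minimum matching sum on~$J$. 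This pins down the cone of $\mc S$ containing $\pi(A)$, and hence $\mc D(A)$, purely in terms of $\Lambda$.

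For the converse $\mc D(A)=\mc D(A')\Rightarrow\Lambda(A)=\Lambda(A')$: Proposition~\ref{prop:codim 1 skeleta}(ii) identifies the codimension-one skeleton of~$\mc N$ with the matrices $A$ for which some maximal tropical minor has its minimum attained twice. If $\lambda\neq\lambda'$ are two such competing matchings on a common column set~$J$, the symmetric difference $\lambda\triangle\lambda'$ is a disjoint union of even cycles in~$\Sigma$; applying Corollary~\ref{cor:spanning tree without left leaves} to splice these cycles with matchings of $\Lambda(A)$ on nearby $(d-1)$- and $(d+1)$-subsets yields a three-term tropical Pl\"ucker equality on $\pi(A)$, placing $\pi(A)$ on a wall of~$\mc S$. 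Hence every wall of~$\mc N$ is a wall of~$\pi^{-1}(\mc S)$, and combined with the forward direction the two fans share their codimension-one skeleta and therefore have identical cone structures.

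The main obstacle I anticipate is the splicing construction for the converse direction, translating a coincidence in a single maximal minor into a Pl\"ucker-style coincidence on $\pi(A)$. The support-set hypothesis enters essentially at this step: the sharp equality $|J_i(\Sigma)|=n-d+1$ of Theorem~\ref{r:SZ3.6} (equivalently the poly-dragon marriage condition of Proposition~\ref{r:SZ3.1}(c)) is what allows the partial matchings arising from $\lambda\triangle\lambda'$ to be completed to full matchings on neighbouring column sets within~$\Sigma$, and without it Example~\ref{ex:(c) -/> (a)} shows that the two fan structures can genuinely differ.
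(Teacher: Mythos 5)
Your reduction to comparing two decompositions of $\R^\Sigma$ is a reasonable framing, but both directions of your argument have genuine gaps, and neither is close to the paper's actual proof (which establishes the two directions separately: Proposition~\ref{prop:(c) -> (b)} constructs the maximal cells of $\mc D(A)$ directly from the multifield as transversal matroids of maximal compatible subgraphs, and Proposition~\ref{prop:(b) -> (c)}/Corollary~\ref{cor:(b) -> (c)} recover $\Lambda(A)$ from $\mc D(A)$ alone via the graphs $\widetilde G(M)$ and a walking procedure in the dual complex). In your forward direction you assert that on a maximal cone $C$ of the normal fan of $\Pi_{d,n}(\Sigma)$ ``the ordering between classes is forced by the defining inequalities of $C$'': the inequalities of $C$ only say that each $\lambda_J$ minimizes the matching sum on its own column set $J$, and it is not at all immediate that they force a constant comparison between the cross terms $A_{\lambda_{S\cup j}}+A_{\lambda_{T\setminus j}}$ for different $j$ — this is essentially the statement you are trying to prove, not a consequence of the cone's definition. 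Worse, even granting it, your next step (``this pins down the cone of $\mc S$ containing $\pi(A)$'') silently invokes the nontrivial fact that the argmin data of all three-term Pl\"ucker relations determines the secondary cone of a tropical Pl\"ucker vector, i.e.\ that the Pl\"ucker fan structure and the secondary fan structure on the Dressian coincide. That is a theorem in its own right, not proved in this paper and not supplied by you; the regular subdivision $\mc D(A)$ is a priori determined by far more comparisons than the three-term ones.

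The converse direction has a sharper problem. Every tropical Pl\"ucker vector has the minimum in \emph{every} three-term relation attained at least twice, so producing ``a three-term tropical Pl\"ucker equality on $\pi(A)$'' places $\pi(A)$ on no wall of anything; to conclude that $\mc D$ actually changes across a wall of the normal fan you would have to show the subdivision strictly coarsens there (e.g.\ that some cell stops being a cell), and your splicing sketch via Corollary~\ref{cor:spanning tree without left leaves} does not do this — it is also left entirely unexecuted how a tie between two matchings on one column set yields the required $(S,T)$ coincidence at the \emph{minimum}. Finally, even if you showed that $\mc D$ changes across every wall of the normal fan, that only distinguishes adjacent maximal cones; injectivity of $\Lambda\mapsto\mc D$ requires ruling out that two non-adjacent cones yield the same subdivision, and your ``equal codimension-one skeleta, hence equal fans'' step presupposes that the fibers $\{A:\mc D(A)=\mc D_0\}$ are convex cones (or at least form a fan). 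Since $\pi$ is only piecewise linear, the pullback $\pi^{-1}(\mc S)$ is not a fan a priori, and this convexity is part of what the theorem asserts. The paper avoids all of this by reconstructing $\Lambda(A)$ from the subdivision itself (Proposition~\ref{prop:(b) -> (c)}), which is the missing ingredient your argument would need.
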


In particular, if $A$ and~$A'$ are matrices of support $\Sigma$,
then $\mc D(A)=\mc D(A')$ if and only if $\Lambda(A)=\Lambda(A')$.

To prove the theorem, we exhibit the two directions of this bijection separately.
Proposition~\ref{prop:(c) -> (b)} gives a construction to pass from
the multifield to the matroid subdivision, and
Corollary~\ref{cor:(b) -> (c)} is the statement that we may 
pass in the other direction.
In fact, Proposition~\ref{prop:(c) -> (b)} holds in greater generality
than just support sets.  The support set assumption is, however,
essential to Corollary~\ref{cor:(b) -> (c)}.
% In fact, it works for arbitrary _minimal_ transversal presentations.
% The key is that recovering the $\mc N(F_i)$ requires the complements
% of the $J_i$ to have corank 1, which only minimal presentations do \cite{BondyWelsh}.

We make a temporary extension of the definition of the matching multifield:
for any matrix $A\in\Rinf^{d\times n}$, let $\tilde\Lambda(A)$ be the set of
matchings $\lambda$ such that $\sum_{(i,j)\in\lambda} a_{ij}$
is minimized among all matchings on the same set of columns.
In the case that $\supp(A)$ contains a support set, $\tilde\Lambda(A) = \Lambda(A)$.

\begin{proposition}\label{prop:(c) -> (b)}
Let $\Sigma$ be any bipartite graph such that $M(\Sigma)$ is a connected matroid,
and every edge of $\Sigma$ is contained in a matching.
If $A$ is a tropical matrix supported on $\Sigma$, then
the maximal faces of~$\mc D(A)$ are
the transversal matroids of the maximal subgraphs $G$ of~$\Sigma$ 
such that $\tilde\Lambda(A)$ contains all matchings in~$G$.
\end{proposition}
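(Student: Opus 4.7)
My plan is to identify maximal subgraphs $G \subseteq \Sigma$ satisfying the $\tilde\Lambda(A)$-property with maximal transpose covectors $\tau \in \TC(A^{\rm t})$. Granting this identification, the proposition follows from Corollary~\ref{r:reg_transversal} (combined with Remark~\ref{rem:transpose_arr}), which identifies the facets of $\mc D(A)$ with the transversal matroids $M(\tau)$ of such covectors.

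The easy direction is that every covector $\tau = \tc(y)^{\rm t}$ satisfies the property: for a matching $\lambda \subseteq \tau$ on column set $J$ and any matching $\lambda'$ on $J$ within $\Sigma$, summing the defining equalities $a_{ij}+y_j = \min_{j'}(a_{ij'}+y_{j'})$ over $(i,j) \in \lambda$, comparing to the corresponding inequalities for $(i,j) \in \lambda'$, and cancelling $\sum_{j \in J} y_j$, yields $A_\lambda \leq A_{\lambda'}$, so $\lambda \in \tilde\Lambda(A)$.

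For the main direction, any $G \subseteq \Sigma$ satisfying the property and containing a matching should be contained in some covector. Setting $v_j = -y_j$, this reduces to the feasibility of the linear system $u_i + v_j = a_{ij}$ for $(i,j) \in G$ and $u_i + v_j \leq a_{ij}$ for $(i,j) \in \Sigma$. I would establish feasibility via Farkas' lemma: any infeasibility certificate would take the form of a real-valued signed flow $\tilde c$ supported on $\Sigma$, with vanishing row and column sums, $\tilde c_{ij} \geq 0$ on $\Sigma \setminus G$, and $\sum \tilde c_{ij}\, a_{ij} < 0$. Decomposing $\tilde c$ into elementary bipartite cycles (arranged so that the negatively signed edges of each cycle lie in $G$) produces pairs of partial matchings $\lambda^+_k \subseteq \Sigma$ and $\lambda^-_k \subseteq G$ on identical row and column sets, with $\sum_k \alpha_k(A_{\lambda^+_k} - A_{\lambda^-_k}) < 0$. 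Extending each $\lambda^-_k$ to a full matching $\mu^-_k \subseteq G$, the matching $\mu^+_k := (\mu^-_k \setminus \lambda^-_k) \cup \lambda^+_k$ lies in $\Sigma$ and shares its column set with $\mu^-_k$; the $\tilde\Lambda(A)$-property then forces $A_{\lambda^-_k} \leq A_{\lambda^+_k}$ for every $k$, contradicting the displayed negativity.

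The main obstacle is the extension step: a partial matching $\lambda^-_k \subseteq G$ arising from a cycle need not automatically extend to a matching of $[d]$ in $G$. I expect both hypotheses on $\Sigma$ (connectedness of $M(\Sigma)$ and every edge of $\Sigma$ lying in a matching), combined with the fact that $G$ itself contains a matching $\lambda_0$, to enable an augmenting-path argument starting from $\lambda_0$ that produces the required $\mu^-_k$. Once both directions of the identification are in hand, a maximal $G$ with the property must coincide with the covector containing it (since that covector also has the property) and must in turn be a maximal element of $\TC(A^{\rm t})$; conversely every maximal covector is a maximal subgraph with the property. Combined with Corollary~\ref{r:reg_transversal}, this completes the proof.
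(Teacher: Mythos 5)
Your reduction to Corollary~\ref{r:reg_transversal} and your ``easy direction'' (every transpose covector $\tc(y)^{\rm t}$ has the $\tilde\Lambda(A)$-property) are fine, but the main direction has a genuine gap, and it is located exactly at the step you flag: the extension of the partial matchings $\lambda^-_k\subseteq G$ to full matchings in $G$ is not a technicality that the hypotheses on $\Sigma$ repair --- it fails, and with it the intermediate claim itself. The claim ``any $G$ with the property that contains a matching is contained in some covector'' is false. Take $d=3$, $n=4$, $\Sigma=[3]\times[4]$ (so $M(\Sigma)=U_{3,4}$ is connected and every edge lies in a matching), $A=\left(\begin{smallmatrix}0&0&0&0\\ 0&1&10&0\\ 0&10&10&0\end{smallmatrix}\right)$, and $G=\{(1,1),(1,2),(1,3),(2,1),(2,2),(3,1)\}$. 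The only matching contained in $G$ is $\{(1,3),(2,2),(3,1)\}$, which is the unique minimizer on column set $\{1,2,3\}$, so $G$ has the property; but containment in a covector forces $y_1=y_2$ from row~$1$ and $y_1=y_2+1$ from row~$2$, so no covector contains $G$. Your Farkas certificate exists here ($c_{11}=c_{22}=-1$, $c_{12}=c_{21}=+1$), its cycle has negative edges $\{(1,1),(2,2)\}\subseteq G$, and neither half of the cycle extends to a matching in $G$ because row~$3$ meets only column~$1$ in $G$ --- precisely the obstacle you hoped to overcome. The root cause is that the property only constrains \emph{full} matchings of $G$, so edges of $G$ lying in no matching of $G$ can impose unsatisfiable equality constraints in your LP. Any correct argument must either first replace $G$ by the subgraph $G'$ of edges lying in some matching of $G$, or use maximality of $G$ inside the covector argument itself; your outline invokes maximality only after the (false) containment claim, so the plan cannot be completed as written.

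For comparison, the paper does not argue by LP duality at all: it constructs the dual variables directly. It passes to $G'$, takes the spanning forest with no left leaves from Lemma~\ref{lem:spanning tree without left leaves}/Corollary~\ref{cor:spanning tree without left leaves}, solves $a_{ij}+u_i-t_j=0$ on the forest, and glues the components of $G'$ by a tropical eigenvector argument (the minimum-mean-cycle computation on the component graph $B$) to obtain $u',t'$ with $a_{ij}+u'_i-t'_j\geq 0$ on $\Sigma$ and $=0$ on $G'$; the inequality for the remaining edges is exactly the point where the non-extendability problem is handled, using the matchings supplied by Corollary~\ref{cor:spanning tree without left leaves}. Maximality of $G$ enters only to show the zero set is exactly $G'$, connectedness of $M(G)$ (via the coloop-free component) is proved to ensure one really gets a facet, and the converse direction is handled at the level of matroids through Proposition~\ref{r:transversal}, since a maximal $G$ need not literally be a covector (it may contain edges in no matching of $G$); for the same reason your closing identifications ``maximal $G$ $=$ covector containing it'' and ``maximal covector $=$ maximal $G$'' would need to be reformulated in terms of $G'$ or of the transversal matroids even if the containment claim were repaired.
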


\begin{proof}
Let $G$ be a subset of $\Sigma$ containing a matching, such that $\tilde\Lambda(A)$ contains
all matchings contained in~$G$.
Let $G'$ be the set of edges contained in some matching
contained in $G$, so that $M(G')=M(G)$.
The matroid $M(G')$ is the direct sum of all the matroids $M(C)$
where $C$ ranges over the connected components of $G'$.  
We will first construct vectors $t'\in\R^n$ and $u'\in\R^d$ such that
\begin{equation}\label{eq:refstar}
a_{ij} + u'_i - t'_j
\begin{cases}
= 0 & \mbox{for }(i,j)\in G' \\
\geq 0 & \mbox{in any case}. \\
\end{cases}
\end{equation}
%Then $\face_{t'}\mc D(A)$ is the transversal matroid $M(G') = M(G)$.

Let $F$ be the spanning forest of $G'$ 
provided by Corollary~\ref{cor:spanning tree without left leaves}.
We now augment $F$ to a certain spanning tree $F'$ of~$\Sigma$.
Since $F$ is acyclic, there exist elements
$u\in\R^d$ and $t\in\R^n$ of tropical tori
such that $a_{ij}+u_i-t_j=0$ for all $(i,j)\in F$.
If $G'$ has multiple components, then there is choice
in the values $u_i$ and $t_j$.  To be precise, given any vector
$w\in\R^{\mc C}$ where $\mc C$ is the set of components of $G'$, 
the new vectors
$u'\in\R^d$ and $t'\in\R^n$ given by $u_i' = u_i+w_{C(i)}$
where $C(i)$ is the component containing $i$, and similarly
$t_j' = t_j+w_{C(j)}$ where $C(j)$ is the component containing $j$,
also satisfy 
$a_{ij}-u'_i+t'_j=0$ for all $(i,j)\in F$.

Construct a $\mc C\times\mc C$ matrix $B$ with off-diagonal entries
\[b_{C_1C_2} = \min\{a_{ij} : i\in C_1,j\in C_2\}\]
when $C_1\neq C_2$, and diagonal entries $b_{CC} = \infty$.
Let $\mu$ be the tropical (left) eigenvalue of~$B$,
that is, the minimum mean weight of a cycle in $B$,
interpreting it as a complete weighted directed graph.
We claim that $\mu\geq 0$.  If not, then there is a 
cycle $D$ in $B$ of negative weight.  For each edge $(C_1,C_2)$ of~$D$,
choose an edge $(i,j)$ of~$\Sigma$ so that $a_{ij}$ attains
the minimum in the definition of $b_{C_1C_2}$;
let $D'$ be this set of edges.  
Between each successive pair of edges in~$D'$
there is a path in~$F$ from the right vertex of the former
to the left vertex of the latter.
The union of $D'$ and all of these paths
gives a cycle $\widetilde D$ 
in $\Sigma$ with total weight $\mu <0$,
since paths in $F$ have weight~0.  Note that all edges
of~$D'$ are given the same orientation in the cycle $\widetilde D$.
Just as in the proof of Corollary~\ref{cor:spanning tree without left leaves},
we can construct a partial matching in $F$ by directing the edges of $F$ 
outside $\widetilde D$ away from~$\widetilde D$
and then choosing an out-edge from each left vertex.
Uniting this partial matching with the sets of alternate edges in $\widetilde D$
gives rise to two matchings
$\lambda,\lambda'$: one of these, $\lambda$, contains $D'$
while $\lambda\setminus D'$ is contained in~$F$;
the other, $\lambda'$, is wholly contained in~$F$.
But then $\sum_{(i,j) \in \lambda} a_{ij} = 
\mu + \sum_{(i,j) \in \lambda'} a_{ij} < \sum_{(i,j) \in \lambda'} a_{ij}$, 
so $\lambda'$ is a matching 
contained in $G$ but it is not in $\tilde\Lambda(A)$, contradicting our hypothesis.  
We thus have $\mu\geq0$.  

Suppose $d<n$, for otherwise our proposition is trivial.
Some component $C_*$ of~$G'$ has more right vertices than left ones,
and Lemma~\ref{lem:spanning tree without left leaves} implies
that the transversal matroid $M(C_*)$ is connected, hence coloop-free.  
Define a matrix $B'$ with entries given by $b'_{C_*C_*} = 0$
and $b'_{C_1C_2} = b_{C_1C_2}$ otherwise.
The tropical eigenvalue of $B'$ is zero,
since the cycle on the single vertex $C_*$ is the one of minimum mean weight.  Also, 
$B'$ has a tropical eigenvector $v$ such that,
for every component $C_1$, there is a path
$C_1, C_2,\ldots, C_\ell=C_*$ so that 
$b'_{C_kC_{k+1}} + v_{C_k} = v_{C_{k+1}}$ for all $k$,
and in any case for any two components $C_1,C_2$ we have $b'_{C_1C_2} + v_{C_1} \geq v_{C_2}$.
Fix such a $v$ and a family of such paths, so that for each component $C=C_1$
other than $C_*$ we have fixed a choice of component $s(C)=C_2$.  
Now construct a vector $w\in\R^{\mc C}$ as follows: let $w_{C_*} = 0$, and 
for every component $C\neq C_*$, 
choose $i\in C, j\in s(C)$ so that $a_{ij}$ attains the minimum 
in the definition of $b_{C,s(C)}$, and impose the condition
$w_C - w_{s(C)} = v_C-v_{s(C)}-u_i+t_j$.
These conditions together uniquely determine the vector~$w$, and for the edges $(i,j)$ involved we have
\[a_{ij}+u'_i-t'_j = b_{C,s(C)}+u'_i-t'_j 
= v_{s(C)}-v_C+u_i+w_C-t_j-w_{s(C)} = 0.\]  
Let $F'$ be the 
spanning tree of $\Sigma$ which is the union of $F$
and the set of these individual edges $(i,j)$.
Then by construction we have $a_{ij}+u_i-t_j=0$ when $(i,j)\in F'$.

We now confirm inequalities \eqref{eq:refstar} for $t'$ and~$u'$.
We have just verified equality for edges of~$F'$,
so let $e$ be an edge of $\Sigma\setminus F'$. 
There is a single cycle $Z$ contained in $F'\cup\{e\}$. 
By Corollary~\ref{cor:spanning tree without left leaves},
which we used to construct $F\subseteq F'$,
we see that the sets of alternate edges of $Z$ can be augmented to
two matchings $\lambda$ and $\lambda'$
on the same right vertex set $J$,
such that $\lambda$ contains $e$, and all edges of 
either matching other than~$e$ are drawn from~$F'$.  
Therefore, writing $e=(i_0,j_0)$, we have
\begin{equation}\label{eq:5.9b}
a_{i_0j_0} + u_{i_0}' - t_{j_0}' =
\sum_{(i,j)\in\lambda} a_{ij}+u_i'-t_j' \geq \sum_{(i,j)\in{\lambda'}} a_{ij}+u_i'-t_j' = 0
\end{equation}
since the quantity $a_{ij}+u_i'-t_j'$ is zero on $F'$ 
and $\lambda' \in \tilde\Lambda(A)$ because $\lambda' \subseteq G$.
This is the inequality in~\eqref{eq:refstar}.
The equality for edges $e\in G'$ follows because, in this case,
$\lambda$ is also a matching in $\tilde\Lambda(A)$ and then by definition
equality is attained in~\eqref{eq:5.9b}.

It follows that the bases of $\face_{t'}\mc D(A)$ are all sets $J$
on which there is a matching $\lambda$ with 
$a_{ij}+u_i'-t_j'=0$ for all $(i,j)\in\lambda$.
If there was an edge $(i,j)\not\in G$ such that
$a_{ij} + u'_{i} - t'_{j} = 0$,  
then the graph $G''=G\cup\{(i,j)\}$ would share with $G$ the property
that every matching it contains is in $\tilde\Lambda(A)$:
this is because $\sum_{(i,j)\in\lambda} a_{ij} + u'_{i} - t'_{j}$
is zero for every matching $\lambda$ contained in $G''$
(whether or not $(i,j)$ features), 
but is nonnegative for any matching $\lambda$ contained in $\Sigma$.
So if $G$ is maximal with the said property, 
then every edge $(i,j)$ with $a_{ij} + u'_{i} - t'_{j}=0$
must be contained in $G$.
It follows that $\face_{t'}\mc D(A)$ equals $M(G)$.

Finally, to show that if $G$ is maximal then $M(G)$ is connected
and therefore a maximal face of $\mc D(A)$, 
let $H$ be a set of edges of $\Sigma$ composed of,
for each pair $C_1\neq C_2$ of components of $G'$ such that $b'_{C_1C_2} + v_{C_1} = v_{C_2}$,
one edge from a left vertex $i\in C_1$ to a right vertex $j\in C_2$
with $a_{ij} + u'_i - t'_j = 0$.
Observe that every edge of~$H$ is in~$G$.
We claim that each edge of $H$ is contained in some matching on $\Sigma$.
Indeed, each individual edge $e$ of $H$ lies at the beginning of a path $p$ 
contained in $F'\cup H$ whose end lies in $C_*$ and is the only vertex
of~$p$ in~$C_*$, and such that 
all the edges of $p$ in $H$ are traversed from left to right.
Then, since $M(C_*)$ is coloop-free, there is a matching on $C_*$
avoiding the endpoint of $p$.
By the argument in the proof of Corollary~\ref{cor:spanning tree without left leaves},
the edges which $p$ traverses right to left can be extended to
a matching on the union of the other components of $G'$.
The desired matching containing $e$ is then
the union of the edges which $p$ traverses left to right
and these matchings on $C_*$ and the other components of $G'$.  It follows from
Lemma~\ref{lem:spanning tree without left leaves} that $M(F'\cup H)$
is connected.  Since $F'\cup H$ contains every vertex of $\Sigma$
and $G\supseteq F'\cup H$, every basis of $M(F'\cup H)$ is a basis of $M(G)$,
and thus $M(G)$ is connected too.
% This implies that $M(G)$ is also connected: otherwise, there would be some nonempty proper subset
% $J\subseteq[n]$ such that all bases of $M(G)$ met $J$ in a constant number of
% elements, so the same would be true of $F'\cup H$, which is a contradiction.

All that remains is to show that every maximal face of $\mc D(A)$
in fact is produced by our construction.  
By Corollary~\ref{r:reg_transversal},
every maximal face $M$ in $\mc D(A)$
is the transversal matroid of a graph of the form $\tc(y)^{\rm t}$,
and $\tilde\Lambda(A)$ contains all matchings contained in this graph.
If $G$ is a maximal graph with this property,
such that $\tc(y)^{\rm t}\subseteq G$, then the bases of $M$
are a subset of those of $M(G)$.  But we have shown that $M(G)$ 
is some maximal face of $\mc D(A)$,
so $M(G)$ must equal $M$.
\end{proof}

Suppose $\Sigma$ is a support set and $A$ is a tropical matrix supported on $\Sigma$.
To prove the other direction of Theorem~\ref{thm:(b) <-> (c)},
we describe the procedure for recovering the graphs $\tc(y)^{\rm t}$, as
in Proposition~\ref{r:transversal}, that arise for the matroids in $\mc D(A)$.
The collection of all sets of edges forming a matching in
any one of these graphs is the matching multifield $\Lambda(A)$ of $A$.

For $i \in [d]$, consider the functional $f_i=\sum_{j\in J_i(\Sigma)} y_j$, and
let $F_i$ be the face of the hypersimplex $\Delta_{d,n}$ minimizing $f_i$.
Observe that $F_i$ is the translate of the simplex $\Delta_{J_i(\Sigma)}$
by $\sum_{j\not\in J_i(\Sigma)} e_j$.

\begin{lemma}\label{lem:(b) -> (c) 1}
The face of $P_\Sigma$ minimizing $f_i$ contains $F_i$.
Thus, the face of $\mc D(A)$ minimizing $f_i$ equals $F_i$.
\end{lemma}

\begin{proof}
We first establish that 
the minimum value attained by the functional $f_i=\sum_{j\in J_i(\Sigma)} y_j$
on $P_\Sigma$ is~1.  
For this, the minimum attained by a functional is additive under
Minkowski sum.  
The minimum attained by $f$ on $\Delta_{J_i(\Sigma)}$ is~1. 
On the other summands $\Delta_{J_{i'}(\Sigma)}$
of~$P_\Sigma$ the mininum is 0, because Proposition~\ref{r:SZ3.1}(c)
with $I = \{i,i'\}$ and the definition of support set
imply that $|J_i\cup J_{i'}| \geq n-d+2 > n-d+1 = |J_{i'}|$,
thus that $J_{i'}$ does not contain $J_i$. 

The first claim of the lemma for any $j\in J_i(\Sigma)$,
the point $x=\sum_{j'\in [n]\setminus J_i(\Sigma)\cup\{j\}} e_{j'}$
is a vertex of~$P_\sigma$ (at which clearly $f_i$ assumes the value~1).
This follows because
the set $\Sigma$ supports a matching $\lambda$ on $[n]\setminus J_i(\Sigma)\cup\{j\}$,
so that $x$ is in~$P_\Sigma$ by the Minkowski sum description, 
being the sum of the points
$e_{j'}\in\Delta_{J_{i'}(\Sigma)}$
for each $(i',j')\in\lambda$.

The second claim then follows from Theorem~\ref{r:restrictsubdiv}.
Since the underlying space of $\mc D(A)$ is the intersection of $P_\Sigma$
with $\Delta_{d,n}$, and $\mc D(A)$ contains $\face_{f_i}\Delta_{d,n}=F_i$ as a face,
this must also be $\face_{f_i}\mc D(A)$.
\end{proof}

Let $\mc D(A)^*$ be the subdivision normal to the regular subdivision $\mc D(A)$:
that is, the open cells of $\mc D(A)^*$ are the sets 
$\{u : \face_u\mc D(A) = F\}$ for faces $F$ of $\mc D(A)$.
By Lemma~\ref{lem:(b) -> (c) 1}, $F_i$ is a face of $\mc D(A)$.
Using the restriction of the height function on $\mc D(A)$ 
to~$F_i$ realises $F_i$ as a trivial
regular subdivision with one maximal face.
Its own normal subdivision,
which we will call $F_i^*$,
is a translation of the normal fan $\mc N(F_i)$, 
lying in the same ambient space that contains $\mc D(A)^*$.

If $M$ is a connected matroid whose polytope $\Gamma_M$ 
is a cell of $\mc D(A)$, define for each $i\in[d]$ 
a subset $J_i$ of~$J_i(\Sigma)$ as follows.
Let $\Gamma_M^*$ be the vertex of $\mc D(A)^*$ dual to $\Gamma_M$.
This vertex is contained in the relative interior of some 
face of $F_i^*$; take $J_i$ to be the subset of $J_i(\Sigma)$
indexing this face. 
Now define the bipartite graph $\widetilde G(M)$ on vertex set $[d]\amalg[n]$
so that the set of neighbours of the left vertex $i$
is $J_i$ for each~$i$.

\begin{proposition}\label{prop:(b) -> (c)}
Let $\Sigma$ be a support set, and let $M$ be a connected matroid contained in
a regular matroid subdivision $\mc D$ in the image of $\pi_\Sigma$.
Then the graph $\widetilde G(M)$ is well defined,
depending only on $M$ and $\mc D$.
Also, if $u$ is a vector such that $M$ equals the face $\face_u(\mc D)$ of $\mc D$ selected by $u$,
then $\widetilde G(M)$ is the graph $\tc(u)^{\rm t}$.
\end{proposition}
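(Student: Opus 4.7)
My plan is to establish the identity $\widetilde G(M) = \tc(u)^{\rm t}$ directly from the definition of $\widetilde G(M)$; the well-definedness assertion then follows formally, because, since $\Sigma$ is a support set and $M$ is connected, $\Gamma_M$ is a facet of $\mc D$ (codimension~$1$ in $\R^n$, so top-dimensional in $\Delta_{d,n}$), and hence any two functionals $u$ selecting $M$ differ only by a multiple of $(1,\ldots,1)$.  Since $\tc(u)^{\rm t}$ is invariant under adding multiples of $(1,\ldots,1)$ to $u$, once the identity is known for one such $u$ it holds for all, and $\widetilde G(M)$ is independent of the choice of $A$ within its $\R^d$-orbit.

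The geometric crux is to compute the lifting heights on the face $F_i$. By Theorem~\ref{r:SZ3.6} we have $|J_i(\Sigma)|=n-d+1$ and $|[n]\setminus J_i(\Sigma)|=d-1$, so the vertices of $F_i$ are exactly the points $e_K$ with $K=([n]\setminus J_i(\Sigma))\cup\{j\}$ as $j$ ranges over $J_i(\Sigma)$.  For each such $K$, every $\Sigma$-matching of $[d]$ onto $K$ is forced to send $i\mapsto j$, since $[n]\setminus J_i(\Sigma)$ contains no edge of $\Sigma$ incident to~$i$; by Proposition~\ref{r:SZ3.1}(c) and Hall's theorem, the remaining rows $[d]\setminus\{i\}$ can then be matched bijectively with $[n]\setminus J_i(\Sigma)$ inside $\Sigma$.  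Letting $C_i$ denote the minimum total weight of such a completion, a constant independent of $j$, we conclude $p_K = a_{ij} + C_i$.  Thus, up to the additive constant $C_i$, the lift $p$ restricted to $F_i$ coincides with the $i$-th row of $A$.

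It follows that the normal fan $\mc N(F_i)$ of $F_i$ as a face of $\mc D(A)$, incorporating these inherited lifting heights, has cones indexed by subsets $K\subseteq J_i(\Sigma)$ via the rule $u\in C_K\iff \argmin_{j\in J_i(\Sigma)}(a_{ij}+u_j)=K$.  Setting $u=\Gamma_M^*$ and applying Proposition~\ref{r:transversal}, the unique cone of $\mc N(F_i)$ whose relative interior contains $u$ is the one with $K=\{j\in J_i(\Sigma):(i,j)\in\tc(u)^{\rm t}\}=J_i(\tc(u)^{\rm t})$.  Assembling over all $i$ yields $\widetilde G(M)=\tc(u)^{\rm t}$, as desired.

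The main obstacle I anticipate is justifying rigorously that $\mc N(F_i)$ really has the ``shifted'' cone structure described above --- that is, that as a fan in $\mc D(A)^*$ it records the row-$i$ heights on $F_i$ rather than being the naive normal fan of the abstract simplex~$F_i$.  This amounts to reading $\mc N(F_i)$ as the fan of normal cones to those faces of $\mc D(A)$ which contain $F_i$, so that the cones are parametrized by faces of the lifted polytope above $F_i$ and the identification with subsets of $J_i(\Sigma)$ is supplied by the computation of $p|_{F_i}$.  Once this convention is pinned down, the rest of the argument is a direct translation.
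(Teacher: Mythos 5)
Your computation of the restricted lifting heights on $F_i$ --- that $p_K=a_{ij}+C_i$ for $K=([n]\setminus J_i(\Sigma))\cup\{j\}$, with $C_i$ the $([d]\setminus\{i\},[n]\setminus J_i(\Sigma))$ tropical minor --- is exactly the paper's vector $b_i$, and your resolution of the convention issue (reading $\mc N(F_i)$ as the translate of the normal fan of $\Delta_{J_i(\Sigma)}$ by the $i$-th row of $A$, i.e.\ as the hyperplane fan $H_i(A^{\rm t})$) is precisely how the paper pins it down. So the identity $\widetilde G(M)=\tc(u)^{\rm t}$, \emph{for the particular matrix $A$ you fixed}, is fine (the appeal to Proposition~\ref{r:transversal} in the last step is unnecessary: once $\mc N(F_i)$ is the shifted fan, the face containing $u$ is indexed by $\argmin_{j\in J_i(\Sigma)}(a_{ij}+u_j)$ by definition).

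The gap is the well-definedness claim, which does not ``follow formally.'' Both ingredients of the definition of $\widetilde G(M)$ --- the dual complex $\mc D(A)^*$ and the fans $\mc N(F_i)$ --- and both sides of your identity depend a priori on the choice of a matrix $A$ with $\mc D(A)=\mc D$: the entries $a_{ij}$ enter $\tc(u)^{\rm t}$, and the dual vertex $u=\Gamma_M^*$ itself moves when the lifting heights change. Your argument only disposes of the trivial ambiguities (the choice of $u$ up to multiples of $(1,\ldots,1)$ for a \emph{fixed} lifting, and row rescalings of $A$). But matrices lying in different cones of the secondary fan of $\Gamma_\Sigma$ can induce the same matroid subdivision $\mc D$ --- this is exactly the non-injectivity illustrated by Example~\ref{ex:(c) -/> (a)} --- and for such matrices neither the dual complexes nor the arrangements $\mc H(A^{\rm t})$ agree, so the equality of the covectors at the respective dual vertices is a genuine statement needing proof. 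This is what the second half of the paper's proof does: it shows the $i$-th row of $\tc(u)^{\rm t}$ can be recovered from the combinatorial structure of $\mc D^*$ alone by a walking procedure in direction $f_i$, and it verifies (via the analysis of restricting $\mc M(A^{\rm t})$ to the halfspaces $\{x_j\le 1\}$, and the fact that the walk never enters a cell containing a ray in a direction $-e_j$) that this procedure is insensitive to the lifting. Without that argument Corollary~\ref{cor:(b) -> (c)}, i.e.\ the direction of Theorem~\ref{thm:(b) <-> (c)} asserting that $\Lambda(A)$ is determined by $\mc D(A)$, is not established, so this is the essential content of the proposition rather than a formality.
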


\begin{corollary}\label{cor:(b) -> (c)}
Let $\Sigma$ be a support set, and $A$ a tropical matrix of support $\Sigma$.
Then the matching multifield $\Lambda(A)$ is
determined by the matroid subdivision $\mc D(A)$.
\end{corollary}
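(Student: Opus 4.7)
The plan is to combine Propositions \ref{prop:(b) -> (c)} and \ref{prop:(c) -> (b)} to extract $\Lambda(A)$ directly from $\mc D(A)$. Since $\Sigma$ is a support set, the transversal matroid $M(\Sigma)$ equals the connected uniform matroid $U_{d,n}$ and every edge of $\Sigma$ is contained in a matching, so Proposition \ref{prop:(c) -> (b)} applies to $A$; in particular, every facet of $\mc D(A)$ corresponds to a connected matroid $M = M(G)$ for some maximal subgraph $G \subseteq \Sigma$ whose matchings all belong to $\Lambda(A)$.

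First I would process $\mc D(A)$ by running, for each maximal face $M$ of $\mc D(A)$, the normal-fan construction defined just before Proposition \ref{prop:(b) -> (c)} to produce the bipartite graph $\widetilde G(M)$. By Proposition \ref{prop:(b) -> (c)}, this graph coincides with $\tc(u)^{\rm t}$ for any $u \in \R^n$ selecting $M$, and crucially the construction uses only the data of $\mc D(A)$, not of $A$ itself.

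Next I would identify $\Lambda(A)$ with the collection of matchings contained in some $\widetilde G(M)$, as $M$ ranges over the facets of $\mc D(A)$. The containment $\supseteq$ is immediate from the remark following Theorem \ref{thm:(a) -> (c)}, since every matching in a covector $\tc(u)^{\rm t}$ lies in $\Lambda(A)$. The containment $\subseteq$ is the content of Proposition \ref{prop:(c) -> (b)}: every matching in $\Lambda(A)$ embeds into a maximal subgraph of $\Sigma$ all of whose matchings belong to $\Lambda(A)$, and this maximal subgraph is precisely $\widetilde G(M)$ for $M = M(G)$, since both are realized as $\tc(u)^{\rm t}$ for a selecting $u$.

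The main obstacle is this last identification: confirming that the graph $\widetilde G(M)$ obtained intrinsically from $\mc D(A)$ is the same maximal subgraph $G$ named by Proposition \ref{prop:(c) -> (b)}. This reduces to the observation, implicit in the final paragraph of the proof of Proposition \ref{prop:(c) -> (b)}, that for any $u$ selecting a facet $M$ the covector $\tc(u)^{\rm t}$ is already maximal among subgraphs of $\Sigma$ with transversal matroid $M$ and all matchings in $\Lambda(A)$, so no further enlargement is possible. Once this is in hand, the chain of equalities determines $\Lambda(A)$ purely from $\mc D(A)$, proving the corollary.
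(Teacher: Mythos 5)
Your overall architecture agrees with the paper's (the heart of both is Proposition~\ref{prop:(b) -> (c)}, which lets one read the graphs $\widetilde G(M)=\tc(u)^{\rm t}$ off from $\mc D(A)$ alone, and your $\supseteq$ direction via the remark after Theorem~\ref{thm:(a) -> (c)} is fine), but your $\subseteq$ direction has a genuine gap. The statement of Proposition~\ref{prop:(c) -> (b)} only matches the \emph{transversal matroids} of the maximal good subgraphs $G$ with the facets of $\mc D(A)$; it does not identify the graphs themselves with covectors, so the claim that $G$ ``is realized as $\tc(u)^{\rm t}$ for a selecting $u$'' is not available from what you cite. (At best this holds for the essential part $G'$ of $G$, the edges lying in matchings of $G$: a maximal good $G$ is in general forced by maximality to contain edges lying in no matching of $G$, and the internal construction in that proof pins down $G'$, not $G$.) The patch you propose does not repair this: the final paragraph of the proof of Proposition~\ref{prop:(c) -> (b)} shows that every facet arises from \emph{some} maximal good graph, not that $\tc(u)^{\rm t}$ is maximal among good subgraphs with transversal matroid $M$; and even granting that maximality, it does not follow that $\tc(u)^{\rm t}$ coincides with your particular $G$, nor that your matching $\lambda\subseteq G$ lies in $\tc(u)^{\rm t}$ --- two maximal good subgraphs with the same transversal matroid need not be equal a priori, and ruling that out (or showing they share their matchings) is essentially the fact you are trying to prove.

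The step is true and can be closed in either of two short ways. The paper's route avoids Proposition~\ref{prop:(c) -> (b)} entirely: by the remark after Theorem~\ref{thm:(a) -> (c)}, every $\lambda\in\Lambda(A)$ is contained in $\tc(u)^{\rm t}$ for some $u\in\R^n$, and if $u$ selects a cell of positive codimension one may pass to a $v$ selecting an adjacent facet, for which $\tc(u)^{\rm t}\subseteq\tc(v)^{\rm t}=\widetilde G(M)$; covectors only grow under this passage. Alternatively, argue directly with reduced weights: let $J$ be the column set of $\lambda$, choose a facet $M$ of $\mc D(A)$ whose polytope contains $e_J$, and let $u$ select $M$, so that $M=M(\tc(u)^{\rm t})$ by Proposition~\ref{r:transversal}. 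Writing $m_i=\min_j(a_{ij}+u_j)$, every matching $\mu$ on columns $B$ satisfies $\sum_{(i,j)\in\mu}(a_{ij}+u_j-m_i)\geq 0$, with equality exactly when $\mu\subseteq\tc(u)^{\rm t}$; since $J$ is a basis of $M$, some matching on $J$ attains equality, so $p_J=\sum_i m_i-\sum_{j\in J}u_j$, and since $\lambda$ also has weight $p_J$, each of its (nonnegative) reduced entries must vanish, giving $\lambda\subseteq\tc(u)^{\rm t}=\widetilde G(M)$. With either argument in place of your appeal to Proposition~\ref{prop:(c) -> (b)}, the rest of your proof goes through.
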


\begin{proof}
$\Lambda(A)$ is the union of all matchings contained
in the graphs $\tc(u)^{\rm t}$.
In fact, it is sufficient to take the union of
the $\tc(u)^{\rm t}$ where $u$ is a functional
selecting a full-dimensional cell of $\mc D(A)$,
since if $u$ selects a cell of positive codimension
in $\mc D(A)$ and $v$ selects a full-dimensional cell of which $u$
is a face, then $\tc(u)^{\rm t}\subseteq \tc(v)^{\rm t}$.
Proposition~\ref{prop:(b) -> (c)} ensures that the graphs $\tc(u)^{\rm t}$
for full-dimensional cells of $\mc D(A)$, 
that is for connected matroids, are determined by $\mc D(A)$.
\end{proof}

\begin{proof}[Proof of Proposition~\ref{prop:(b) -> (c)}]
Let us fix some tropical matrix $A$ 
so that $\mc D(A)$ is the matroid subdivision chosen.
The dual subdivision to $\mc M(A^{\rm t})$ is $\mc H(A^{\rm t})$.
The $i$\/th hyperplane $H_i(A^{\rm t})$ is the
dual to the simplex $\Delta_{J_i(\Sigma)}$, interpreted 
as a trivial regular subdivision with its vertex $j$ given the height $a_{ij}$.
That is, $H_i(A^{\rm t})$ is the translate of the normal
fan $\mc N(\Delta_{J_i(\Sigma)})$
of $\Delta_{J_i(\Sigma)}$, taken to be based at the origin,
by the row vector $a^{<\infty}_i\in\R^{J_i(\Sigma)}$ 
formed by the non-infinite components of the $i$\/th row of~$A$.
(By this translation, we mean the translation by the vector in $\R^n$
obtained by interpolating zeroes for indices lacking in $a_i^{<\infty}$.)

Now let $b_i\in\R^{J_i(\Sigma)}$ be the row vector such that $b_j$
is the $([d], [n]\setminus J_i(\Sigma)\cup\{j\})$ tropical minor of~$A$ 
for each $j\in J_i(\Sigma)$.
Then every component of the vector $b_i - a_i^{<\infty}$ 
equals the $([d]\setminus\{i\}, [n]\setminus J_i(\Sigma))$ tropical minor of~$A$,
by expanding the determinants along row~$i$. 
Since this difference is contained in the lineality space of the fan $\mc N(\Delta_{J_i(\Sigma)})$,
the translations of $\mc N(\Delta_{J_i(\Sigma)})$ 
by $a_i^{<\infty}$ and by~$b_i$ are equal.
The former translation is $H_i(A^{\rm t})$,
whereas the latter is $F_i^*$, since 
the heights in the regular subdivision $\mc D(A)$
and its regular sub-subdivision $F_i$ are the tropical maximal
minors of~$A$.
So $H_i(A^{\rm t}) = F_i^*$.

Theorem~\ref{r:restrictsubdiv} indicates that $\mc D(A)$ is the
restriction of $\mc M(A^{\rm t})$ to the hypersimplex,
in the sense described there.  If $Q$ is a full-dimensional
cell of $\mc M(A^{\rm t})$ whose restriction $\Gamma_M$ in $\mc D(A)$
is also full-dimensional, then the corresponding cells in 
the dual subdivisions $\mc D(A)^*$ and $\mc H(A^t)$ are both points, and these points
are equal because a functional realizing equal (minimum)
values on all vertices of $Q$ does the same on $\Gamma_M$.
Name the point which constitutes these cells $u$.

By definition the $i$\/th row of the covector
$\tc(u)^{\rm t}$ is the set indexing the face of 
$F_i^* = H_i(A^{\rm t})$ containing~$u$.
Therefore $\widetilde G(M)$ is the covector $\tc(u)^{\rm t}$.

Finally, we assert that $\tc(u)^{\rm t}$ can be determined from 
the cell complex $\mc D=\mc D(A)$ alone, without the data
of the lifting heights in the regular subdivision.
It will follow that $\widetilde G(M)$ is independent of the choice of~$A$.

Consider a ray in $\mc D(A)^*$ emanating from $u$ in direction $f_i$.
Let $E^*$ be the face of $\mc D(A)^*$ in which points of this ray
sufficiently far from $u$ lie.
Then $E^*$ is dual to a face of $\mc D(A)$ on which $f_i$
takes only its minimum value.  By Lemma~\ref{lem:(b) -> (c) 1}
this is a face $E$ of $F_i$.  
In particular, $E$ equals $\face_{u+zf_i}(\mc D(A))$ for $z\gg0$ real,
which is $\face_u(\face_{f_i}(\mc D(A))) = \face_u(F_i)$.
Labelling the faces of the simplex $F_i$ with subsets of $J_i(\Sigma)$
in the standard way, the label of~$E$
is the $i$\/th row of $\tc(u)^{\rm t}$.  

What we wish to show is that it is still possible to carry out this procedure
to determine the $i$\/th row of $\tc(u)^{\rm t}$ using only the
data in the combinatorial type of the subdivision $\mc D(A)^*$.
We will in fact show that the following walking procedure fills the bill:
walk along the faces of $\mc D(A)^*$,
beginning from the vertex of interest, and passing at each step to one of
the faces you could reach next by moving in direction $f_i$,
until reaching a face dual to a face $\Delta_J$ of $F_i$.
Then $J$ is the $i$\/th row of $\tc(u)^{\rm t}$.

The walking procedure we have just sketched does in fact work
when applied to the complex $\mc H(A^{\rm t})$: 
indeed, there is a subcomplex of $\mc H(A^{\rm t})$
with the same support as $F_i^*$,
and it is clear
that a walk as described will never cross from the interior of
one face of $F_i^*$ into a different one,
since $f_i$ is contained in the lineality space of $F_i^*$.

Now, by Theorem~\ref{r:restrictsubdiv}, $\mc D(A)$
is obtained from $\mc M(A^{\rm t})$ by restricting it to the intersection
of all of the halfspaces
$\{x_j\leq 1\}$ for each $j\in [n]$.
Given a polyhedral complex $S$ and its restriction $S'$ to a halfspace $\{\langle x,f\rangle\leq a\}$
in the sense of Theorem~\ref{r:restrictsubdiv}, the relationship 
between the dual complexes $S^*$ and $(S')^*$ is that every cell of $(S')^*$
is either a cell of $S^*$ or is the Minkowski sum of a cell of $S^*$
with a ray in direction $-f$; these correspond to the primal cells
which respectively don't meet and meet the halfspace.

From this description, it follows that the dual complex
of the restriction of $\mc M(A^{\rm t})$ to the intersection of the
halfspaces $\{x_j\leq 1\}$ for $j\not\in J_i(\Sigma)$ 
still contains a subcomplex with total space $F_i^*$,
so that the walking procedure still works there.

Now consider restricting to the remaining halfspaces,
those $\{x_j\leq 1\}$ with $j\in J_i(\Sigma)$.
The walk we construct starts at a vertex $u$ of $\mc D(A)^*$,
which manifestly contains no ray $\R_+(-y_j)$.
We will also show, by induction, that none of the faces we 
may encounter on our walk contain such a ray,
so that the walk proceeds exactly as it did on $\mc H(A^{\rm t})$.
Indeed, if $F$ contains no ray in any of these directions, 
then certainly none of its faces do.  
As for the other kind of step, 
if a face $G$ of $F$ contains no ray in direction $-y_j$
but $F$ does, then a vector in direction $y_j$ based at $G$
points (strictly) out of $F$.
On the other hand, if our walk takes us from $G$ to $F$,
then a vector in direction $f_i$ based at $G$ points (strictly) into $F$.
Therefore, there must be some $\lambda\in(0,1)$ such that
\[v=\lambda y_j + (1-\lambda)f_i = y_j + \sum_{j'\in J_i(\Sigma)\setminus\{j\}} y_{j'} \]
is contained in the linear span of a facet $F'$ of $F$ containing $G$.  
However, the normal vectors to any facet appearing in
the complexes at hand is of form $y_{j_1}-y_{j_2}$, for $j_1,j_2\in[n]$:
this is true of $\mc H(A^{\rm t})$ and remains true as
we perform restrictions.  If $v$ is contained in 
the lineality space of such a facet, i.e.\
$v_{j_1} = v_{j_2}$ are equal, then by the last display
either $j_1$ and $j_2$ are both in $[n]\setminus J_i(\Sigma)$ 
or both in $J_i(\Sigma)\setminus\{j\}$.  In either case $y_j$
and $f_i$ are contained in the same lineality space, i.e.\ they point
along the boundary of $F$ from~$G$, which is a contradiction.
\end{proof}

\section{Tropical linear maps and their images}\label{sec:tropical linear maps}

In this section we relate Stiefel tropical linear spaces and tropical hyperplane arrangements
by investigating the parametrization of a tropical linear space $L(A)$ coming from matrix-vector multiplication.
Throughout this section we will assume $A \in \Rinf^{d \times n}$ is a tropical matrix 
where no column has all its entries equal to $\infty$.

\begin{definition}
Let ${\odot A}:\Rinf^d\to\Rinf^n$ denote tropical right multiplication by~$A$,
where $\Rinf^d$ and $\Rinf^n$ are regarded as spaces of row vectors.
\end{definition}

If $F$ is a face of the hyperplane arrangement complex $\mc H(A)$,
we will write $\tc(F)$ for the tropical covector $\tc(x)$ of any point $x$ in its relative interior.
Denote by $\mc B(A)$ the subcomplex of all cells
$F$ in $\mc H(A)$ such that
for each~$i\in[d]$ there exists some $j \in [n]$ such that $(i,j) \in \tc(F)$.
Geometrically, if $\mc H(A)$ has no hyperplanes at infinity,
the cells in~$\mc B(A)$ are those containing no ray in any of the
standard basis directions.  If $\mc H$ contains hyperplanes
at infinity with supports $\{i_1\}, \ldots, \{i_k\}$, then
rays in the directions $e_{i_1}, \ldots, e_{i_k}$ should
be omitted from consideration.
For instance, if $A$ has pointed support then $\mc B(A) = \mc H(A)$.

The following proposition generalizes Theorem 23 in \cite{DS} to arbitrary matrices in $\Rinf^{d\times n}$.
Its proof is based on the ideas discussed in Remark \ref{rem:transpose_arr}.

\begin{proposition}\label{r:odot A}
The map $\odot A$ is a piecewise linear map from $\TT^{d-1}$ to~$\TT^{n-1}$
whose domains of linearity are the facets of the cell complex $\mc H(A)$.
The restriction of~$\odot A$ to the underlying space $|\mc B(A)|$
is a bijection $(\odot A)|_{\mc B(A)}:|\mc B(A)|\to\im(\odot A)$.
\end{proposition}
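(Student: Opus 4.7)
The plan is to dispatch the two assertions separately. For the first assertion, essentially definitional, I would start from the formula $(x\odot A)_j=\min_{i}(x_i+a_{ij})$ and observe that on the interior of any top-dimensional cell $F$ of $\mc H(A)$ the covector $\tc(F)$ picks out a unique index $i(j)\in[d]$ for each $j\in[n]$, making the $j$-th coordinate of $\odot A$ equal to the linear expression $x_{i(j)}+a_{i(j),j}$. Across an interior wall of the cell complex some $i(j)$ must change, and the linear formula changes with it. So the facets of $\mc H(A)$ are precisely the maximal cells of linearity.

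For the bijection assertion my plan is to exhibit an explicit inverse via the \emph{tropical adjoint} of $\odot A$, namely the map $\alpha\colon\TT^{n-1}\to\TT^{d-1}$ defined by $\alpha(y)_i=\max_{j:\,a_{ij}\neq\infty}(y_j-a_{ij})$, and to show that $\alpha|_{\im(\odot A)}$ is a two-sided inverse of $(\odot A)|_{\mc B(A)}$. The verification breaks into three short steps, each of which plays the defining inequalities against each other. First, if $y=x\odot A$, then $\alpha(y)\leq x$ coordinatewise (from $y_j\leq x_i+a_{ij}$) and $\alpha(y)\odot A\geq y$ (directly from the definition of $\alpha$), so monotonicity of $\odot A$ forces $\alpha(y)\odot A=y$, giving surjectivity. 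Second, $\alpha(y)$ lies in $\mc B(A)$ because each maximum defining $\alpha(y)_i$ is attained at some $j_i$, placing the pair $(i,j_i)$ in $\tc(\alpha(y))$. Third, for injectivity, any $x\in\mc B(A)$ with $x\odot A=y$ satisfies $x\geq\alpha(y)$ as above, and also $x\leq\alpha(y)$: the $\mc B(A)$-condition forces each $x_i$ to equal $y_j-a_{ij}$ for some $j$, hence to be bounded above by the max defining $\alpha(y)_i$.

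The main thing to be careful about is the quotient by the diagonal tropical-torus action, since $\mc B(A)$ and $\im(\odot A)$ live in $\TT^{d-1}$ and $\TT^{n-1}$ respectively. This is benign because $\alpha$ is equivariant for this action (shifting $y$ by a constant shifts $\alpha(y)$ by the same constant), so the argument descends cleanly to the quotients. I do not anticipate any genuinely hard step: the content lies entirely in identifying the adjoint $\alpha$ and recognizing that its image coincides with $\mc B(A)$; once that is on the table, each of the three verifications is a one-line inequality manipulation.
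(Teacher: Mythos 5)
Your proposal is correct and takes essentially the same route as the paper: the paper's injectivity argument pins down the unique preimage in $\mc B(A)$ by exactly your adjoint formula $x_i=\max_{j}(y_j-a_{ij})$, and the identification of the facets of $\mc H(A)$ as the domains of linearity is handled in the same way. The only cosmetic difference is in surjectivity, where the paper lowers each coordinate $x_i$ with $J_i(\tc(x))=\emptyset$ of an arbitrary preimage until it attains a minimum, rather than applying $\alpha$ and monotonicity; the content is the same.
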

\begin{proof}
The domains of linearity of the coordinate
$(x\odot A)_j$ are the facets $F_i(H_j)$ of the fan
associated to the hyperplane $H_j$.  It follows that
the domains of linearity of~$x\odot A$ are the facets of~$\mc H(A)$.

If $x\in\TT^{d-1}$ is such that $J_i(\tc(x))=\emptyset$
then the minimum in~$(x\odot A)_j$ is not attained by $x_i+a_{ij}$ for any~$j$.
For such~$x$
we may change $x_i$ to $\min_{j\in[n]}(x\odot A)_j-a_{ij}$
without changing $x\odot A$.  Thus $(\odot A)|_{\mc B(A)}$
surjects onto $\im(\odot A)$.

Given $y\in\im(\odot A)$, we have $x_i\geq y_j-a_{ij}$ for any
point $x\in(\odot A)^{-1}(y)$ and all $i,j$.
If also $x\in|\mc B(A)|$ then for each~$i\in[d]$ there is some
$j\in[n]$ such that $x_i=y_j-a_{ij}$.  In this situation
$x_i$ is uniquely determined to be~$\max_{j\in[n]}y_j-a_{ij}$
for each~$i$.  That is, there is a unique point in
each fiber $(\odot A)^{-1}(y)$ lying in~$|\mc B(A)|$, so $(\odot A)|_{\mc B(A)}$ is injective.
\end{proof}

Theorem~2.1 of~\cite{STY} relates the image of the tropicalization
of an algebraic map~$f$ to the tropicalization of its image.
To wit, the latter can be constructed from the former by 
taking Minkowski sums of certain faces with orthants.    
Theorem~\ref{r:folding cells} is essentially a generalization of this result in the linear case
to nontrivial valuations, describing $L(A)$ in terms of the image of ${\odot A}$.
The addition in Equation \eqref{eq:L'(A)} denotes Minkowski sum,
and the notation $\R_{\geq 0}S$, for $S$ a subset of a real vector space,
denotes the cone of nonnegative linear combinations of elements of~$S$.

\begin{theorem}\label{r:folding cells}
The tropical linear space $L(A)$ (in the tropical torus $\TT^{n-1}$)
equals the union
\begin{equation}\label{eq:L'(A)}
 L(A) = \bigcup_{\substack{F \in \mc B(A) \\ J \in \mc J_F}} 
 \bigl( F \odot A \enskip + \enskip \R_{\geq 0}\{e_j:j\in J\} \bigr),
\end{equation}
where $\mc J_F$ denotes the collection of subsets $J \subseteq [n]$ such that 
\begin{equation}\label{cd:J}
\mbox{for every nonempty $J'\subseteq J$, \enskip $\left|I_{J'}(\tc(F))\right|\geq |J'|+1$.}
\end{equation}
\end{theorem}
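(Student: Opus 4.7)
The plan is to prove the two inclusions of (\ref{eq:L'(A)}) separately.

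For the inclusion ``$\supseteq$'', I would fix $F \in \mc B(A)$, $J \in \mc J_F$, a point $x$ in the relative interior of $F$, and $t \in \R_{\geq 0}^J$, and verify that $y := x \odot A + \sum_{j\in J} t_j e_j$ lies in $L(A)$ by invoking Proposition~\ref{r:transversal}, which reduces the problem to showing that the transversal matroid associated to the covector of $y$ in $\mc H(A^{\rm t})$ is loopless. A direct row-by-row computation expresses this covector in terms of $\tc(F)$, $J$, and $t$; then, for any column $k \in [n]$, Hall's theorem applied to $\tc(F)|_{[d]\times J}$ — using precisely the extra ``$+1$'' slack from (\ref{cd:J}) — produces a matching meeting $k$, establishing looplessness.

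For the inclusion ``$\subseteq$'', given $y \in L(A)$, the task is to construct a quadruple $(F, J, x, t)$ realizing the decomposition $y = x \odot A + \sum_{j \in J} t_j e_j$. The naive choice $x_i := \min_j(y_j - a_{ij})$ achieves $x \odot A \leq y$ componentwise with $t := y - x \odot A \geq 0$, and also forces $F \in \mc B(A)$ (since the minimum defining $x_i$ is attained at some edge of $\tc(F)$); however, the resulting $J := \supp(t)$ can violate (\ref{cd:J}), as small $2\times 3$ examples with $y$ far along an unbounded ray of $L(A)$ make explicit. The remedy is to instead choose $x$ so that for each $j \in J$ the point $x$ lies on the tropical hyperplane $H_j$ (forcing $|I_{\{j\}}(\tc(F))| \geq 2$), and more globally so that the restricted bipartite graph $\tc(F)|_{[d]\times J}$ satisfies the full slack Hall condition. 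Such an $x$ exists because $y \in L(A)$: looplessness of the matroid selected by $y$ in $\mc D(A)$ provides, column by column, the combinatorial data needed for placing $x$ on suitable intersections of hyperplanes of $\mc H(A)$, and one can then take $t := y - x \odot A$.

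The main obstacle is the ``$\subseteq$'' direction, specifically the careful selection of $x$. The underlying combinatorial difficulty is that the bipartite graphs $\tc(F) \subseteq [d]\times[n]$ in $\mc H(A)$ and $\tc(y)^{\rm t}$ in $\mc H(A^{\rm t})$ are in general distinct at corresponding points, so the looplessness of the transversal matroid read from $\mc H(A^{\rm t})$ does not translate in a one-line fashion into the slack Hall condition on $\tc(F)$. Bridging this gap requires exploiting the shared cellular structure of the two arrangements, which by Remark~\ref{rem:transpose_arr} is encoded in the subdivision $\mc S(A)$ of the root polytope $\Gamma_\Sigma$. I expect the cleanest argument to proceed by identifying the correct pair $(F, J)$ directly from the cell of $\mc S(A)$ dual to the face of $L(A)$ locally containing $y$, and then reading off $x$ from the primal vertex coordinates of that cell.
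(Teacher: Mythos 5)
Both directions of your plan have genuine gaps. For ``$\supseteq$'', the reduction to Proposition~\ref{r:transversal} does not go through: once $t$ is large, the covector of $y=x\odot A+t$ in $\mc H(A^{\rm t})$ is not determined by $(\tc(F),J,t)$ at all --- it depends on comparing the $t_j$ against the positive slacks $a_{ij}+x_i-(x\odot A)_j$ for $(i,j)\notin\tc(F)$ --- and it can fail to contain a matching, in which case Proposition~\ref{r:transversal} is silent and its transversal matroid is not the matroid selected by $y$. Concretely, for the matrix of Example~\ref{ex:folding} take $x=(0,0,0)$ (so $\tc(F)=\supp(A)$ and $F\in\mc B(A)$), $J=\{2\}$ (which satisfies \eqref{cd:J}) and $t_2=10$, so $y=(0,10,0,0)\in L(A)$. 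The graph of row-wise minimizers of $a_{ij}-y_j$ is $\{(1,2),(2,2),(3,3),(3,4)\}$, which contains no matching, while the graph of row-wise minimizers of $a_{ij}+y_j$ is $\{(1,1),(2,3),(3,3),(3,4)\}$, whose transversal matroid has $2$ as a loop; so under either reading of the covector your criterion rejects a point that does lie in $L(A)$. What actually has to be shown is that for every $k\in[n]$ the minimum of $p_B-\sum_{j\in B}y_j$ is attained at some $B$ containing $k$; Hall's theorem applied to $\tc(F)|_{[d]\times J}$ alone does not deliver this, and your sketch supplies no bridge.

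For ``$\subseteq$'' you correctly diagnose that the naive choice $x_i=\min_j(y_j-a_{ij})$ fails, but the proposed remedy is precisely the hard content of the theorem and is left unproved: ``looplessness provides, column by column, the combinatorial data needed'' is a hope, not a construction, and placing $x$ on hyperplanes of $\mc H(A)$ one column at a time can destroy either the inequality $y\geq x\odot A$ or condition \eqref{cd:J} at other columns, so a global argument (or an explicit recipe from the cell of $\mc S(A)$, which you only conjecture) is required. For comparison, the paper proves both inclusions by lifting to a field of generalized power series: writing $y=\val(\cl x\cdot\cl A)$ for a generic lift $\cl A$ of $A$ with $\cl y\in\cl L(\cl A)$, $\val(\cl y)=y$, and setting $x=\val(\cl x)$, the coordinates with $y_j>(x\odot A)_j$ are exactly those where leading terms cancel, which exhibits a vector with all entries nonzero in the kernel of a residue matrix supported on $\tc(x)\cap([d]\times J)$; Lemma~\ref{lem:kernel meets torus} converts this into \eqref{cd:J}, and the reverse inclusion runs the same mechanism backwards, choosing leading coefficients via that lemma and solving a linear system for higher-order terms. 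If you want a purely polyhedral proof you must supply a combinatorial substitute for Lemma~\ref{lem:kernel meets torus} together with an exchange-type argument for looplessness of the selected matroid; as written, neither inclusion is established.
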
 

The combinatorial condition on the sets $J$ that we have stated as condition~\eqref{cd:J}
has a more algebraic guise in~\cite{STY}.  Namely, the statement is that
the ideal $(\mathrm{in}_u(\cl f_j) : j\in J)$
contains no monomial, where $u$ is a relative interior point of~$F$,
and $\cl f_j$ are generic linear forms 
$\sum (\cl f_j)_i \, \cl x_i$ in variables $\cl x_i$
tropicalizing to the tropical linear forms determining the map $\odot A$, 
so that the matrix $\cl A = [(\cl f_j)_i]_{i,j}$ tropicalizes to~$A$.
The entries $(\cl a_j)_i$ which survive in the $u$-initial form are 
exactly those with $(i,j)\in\tc(F)$. 
A linearly generated ideal is prime, so if it contains a monomial,
it must contain a variable, one of the $\cl x_i$, which will be
a scalar combination of the $\cl f_j$.  
Lemma~\ref{lem:kernel meets torus} asserts that condition~\eqref{cd:J}
is exactly the condition under which this is generically avoided.

\begin{lemma}\label{lem:kernel meets torus}
Let $\bb K$ be a field, and $G\subseteq[d]\times J$ for some set $J$.
The (left) kernel of a generic matrix $\cl A\in\bb K^{d\times J}$ of support~$G$
fails to be contained in any coordinate hyperplane of $\bb K^d$ if and only if,
for every nonempty $J'\subseteq J$ we have $|I_{J'}(G)| \geq |J'|+1$ or $|I_{J'}(G)| = 0$.
\end{lemma}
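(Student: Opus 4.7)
The plan is to translate the kernel condition into a graph-theoretic statement about maximum matchings of $G$, then verify both implications directly using Hall's and K\"onig's theorems. The key reduction is that the left kernel of $\cl A$ avoids the coordinate hyperplane $\{v_i=0\}$ precisely when $e_i$ is not in the column span of $\cl A$, equivalently when $\mathrm{rank}([\cl A\mid e_i])>\mathrm{rank}(\cl A)$. By the Frobenius--K\"onig theorem, for a generic matrix of bipartite support $G$ the rank equals the maximum matching number $\nu(G)$; augmenting $G$ with a new right vertex adjacent only to $i$ gives matching number $\max(\nu(G),\,1+\nu(G-i))$. Thus the geometric condition is equivalent to $\nu(G-i)=\nu(G)$ for every $i\in[d]$, i.e.\ that no left vertex is essential in a maximum matching of $G$. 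The disjunct $|I_{J'}|=0$ in the combinatorial hypothesis accommodates right vertices $j$ with $I_{\{j\}}=\emptyset$, whose columns in $\cl A$ vanish identically and can be ignored; we assume below that no such isolated right vertex exists.

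For the $(\Leftarrow)$ direction, the inequality $|I_{J'}(G)|\geq|J'|+1$ for every nonempty $J'\subseteq J$ is a strengthened Hall marriage condition on the right, so $\nu(G)=|J|$. Deleting any row vertex $i$ leaves $|I_{J'}(G-i)|\geq|I_{J'}(G)|-1\geq|J'|$, so Hall's condition still holds and $\nu(G-i)=|J|=\nu(G)$; no left vertex is essential.

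For the $(\Rightarrow)$ direction, suppose the combinatorial condition fails at a nonempty $J^*\subseteq J$ with $0<|I_{J^*}|\leq|J^*|$. If $\nu(G)=|J|$, every maximum matching saturates $J$, in particular $J^*$, using $|J^*|$ distinct left neighbors from $I_{J^*}$; hence $|I_{J^*}|\geq|J^*|$, forcing equality, and every $i\in I_{J^*}$ is used in every maximum matching and is therefore essential. If instead $\nu(G)<|J|$, then by K\"onig's theorem the right-deficiency $\mathrm{def}:=\max_{J'\subseteq J}(|J'|-|I_{J'}|)\geq 1$; choose $J^{**}$ attaining it. A counting argument on any maximum matching $M$ shows that $M$ matches exactly $|I_{J^{**}}|$ vertices of $J^{**}$, using all of $I_{J^{**}}$, and saturates $J\setminus J^{**}$. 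In particular every $i\in I_{J^{**}}$ is essential; since no right vertex is isolated, $I_{J^{**}}\neq\emptyset$.

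The main obstacle is the case analysis in the converse direction, particularly the deficient case, where one must abandon the given failing set $J^*$ in favor of a maximum-deficiency set $J^{**}$ in order to exploit the tight matching structure provided by K\"onig's theorem and thereby identify essential vertices.
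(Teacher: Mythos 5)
Your proof is correct, and it takes a genuinely different route from the paper's. You eliminate the field at the outset: using generic rank $=$ term rank you translate ``$e_i$ lies in the column span'' into $1+\nu(G-i)\leq\nu(G)$, so the kernel condition becomes the purely combinatorial statement that no left vertex lies in every maximum matching of $G$; you then prove this is equivalent to the marriage-type condition via Hall's theorem with slack (one direction) and the defect form of Hall/K\"onig applied to a maximum-deficiency set $J^{**}$ (the other). The paper instead argues directly on the matrix: for one direction it picks a minimal violating set $J'$, applies Hall transposely to get an invertible $(I_{J'}(G),J')$ submatrix, and inverts it to exhibit $e_i$ explicitly in the column span; for the converse it writes $e_i=\sum_j \cl c_j\cl a_j$ and uses the transpose dragon marriage theorem to impose $|J^{*}|$ independent relations on the coefficients, a contradiction. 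Your route quarantines genericity in one standard fact and yields a clean intermediate characterization (no ``essential'' left vertex, in the spirit of the Dulmage--Mendelsohn decomposition); the paper's route produces explicit certificates and stays inside the dragon-marriage circle of ideas it uses elsewhere (e.g.\ Proposition~\ref{r:SZ2.4}).

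Two caveats. First, $[\cl A\mid e_i]$ is not literally a generic matrix of the augmented support since its last column is fixed; a one-line remark that rescaling that column is harmless for rank makes your term-rank computation airtight. Second, your dismissal of isolated right vertices as ``ignorable'' is only a safe reduction in the direction where the combinatorial condition is the hypothesis: in the mixed case where some but not all columns are empty, deleting zero columns does not preserve the stated condition (for a generic $2\times 2$ matrix with one zero column the kernel lies in no coordinate hyperplane, yet $J'=J$ violates the condition), so the statement with the $|I_{J'}(G)|=0$ disjunct is itself delicate there. This blind spot is shared by the paper's own argument, whose minimal violating $J'$ may contain empty columns so that Hall's theorem does not apply, and it is irrelevant to the application in Theorem~\ref{r:folding cells}, where every column of the matrix in question has nonempty support; still, it would be cleaner to state your no-isolated-column assumption as a standing hypothesis rather than as a claimed reduction.
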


\begin{proof}
Note that the $(I,J')$ minor of $\cl A$ will be generically nonzero so long
as its permutation expansion contains any surviving term once the
entries outside $G$ are set to zero, that is, if $G$ contains a matching
from $I$ to $J'$.

Suppose some $J'$ has $0 \neq |I_{J'}(G)| < |J'|+1$.  Choose a minimal such $J'$;
it follows that $|I_{J'}(G)| = |J'|$, since otherwise an element of $J'$ 
could be deleted while maintaining the inequality.  
Hall's marriage theorem (applied transposely) shows that there is a partial matching
from $I_{J'}(G)$ to $J'$ contained in $G$.  
Therefore, the $(I_{J'}(G), J')$ submatrix of a generic $\cl A$ is invertible; 
denote its inverse by $\cl B$.  The columns of the product of
the $([d], J')$ submatrix of $\cl A$ with~$\cl B$
are the standard basis vectors $e_i$ for each $i\in I_{J'}(G)$,
so these $e_i$ are in the column space of $\cl A$, and thus 
the kernel of $\cl A$ is contained in $\{\cl x:\cl x_i=0\}$
for all such $i$.

Conversely, if $\ker\cl A$ is contained in $\{\cl x : \cl x_i=0\}$ for some~$i$,
i.e.\ if $e_i$ is in the column space of $\cl A$, write it as 
a linear combination of the columns, 
\[e_i = \sum_j\cl c_j\cl a_j,\]
and let $J^* = \{j : \cl c_j\neq 0\}$.  
Since the coefficients $\cl c_j$ are not all zero,
they cannot satisfy more than $|J^*|-1$ independent linear relations.
But if $|I_{J'}(G)| \geq |J'|+1$ for all nonempty $J'\subseteq J^*$, then 
the displayed equation imposes $|J^*|$ independent linear relations on them,
because the dragon marriage theorem (applied transposely)
shows the existence of a matching from a subset of~$I_{J^*}(G)\setminus\{i\}$ 
to $J^*$, and thus the corresponding $|J^*|\times|J^*|$ minor of $\cl A$ is nonzero.
\end{proof}

We may also see \eqref{cd:J} as a tropical transverse intersection condition. 
Indeed, our condition is the (transpose) dragon marriage condition on the induced subgraph
with vertices $I_J(\tc(F)) \amalg J$,
which by Proposition~\ref{r:SZ2.4} is equivalent to the existence 
of an acyclic subgraph of $\tc (F)$ with degree 2 at every~$j\in J$. 
This in turn can be translated to the geometric condition that
one can choose a facet of $H_j$ containing $F$ for each $j\in J$,
in such a way that their affine spans intersect transversely.

\begin{example}\label{ex:folding}
 Consider the matrix
 \[
  A = \begin{pmatrix}
        0 & 0 & \infty & \infty \\
        \infty & 0 & 0 & \infty \\
        \infty & \infty & 0 & 0
      \end{pmatrix}.
 \]
The corresponding tropical Pl\"ucker vector $\pi(A) \in \Gr(3,4)$ is the all zeroes vector,
so the tropical linear space $L(A)$ is the standard tropical plane in 3-space with vertex at the origin.
The behavior of the tropical linear map $\odot A : \TT^2 \to \TT^3$ is depicted in Figure \ref{f:linear_map}.
The linear space $L(A)$ can be obtained from $\im( \odot A)$ by adding different orthants, as described
by Theorem \ref{r:folding cells}.
\begin{figure}[ht]
\centering
\includegraphics[scale=0.35]{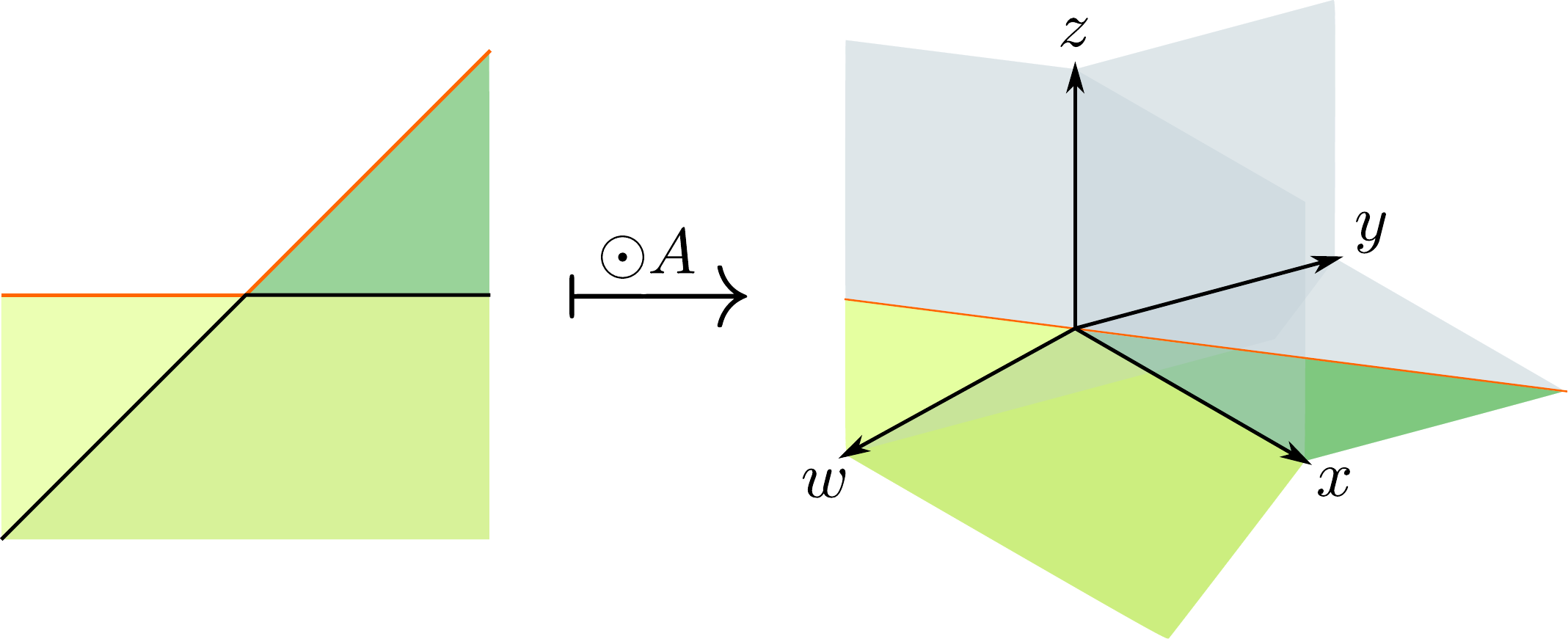}
\caption{At the left, the hyperplane complex $\mc H(A)$ for the matrix $A$ of Example \ref{ex:folding}.
The colored regions correspond to the subcomplex $\mc B(A)$.
This subcomplex gets mapped homeomorphically by $\odot A$
into the linear space $L(A)$, as is shown at the right.
The different colors are meant to illustrate the correspondence among different faces
under this map. The gray colored cones are the extra orthants added according to Theorem \ref{r:folding cells}.}
\label{f:linear_map}
\end{figure}
\end{example}

We provide a proof of Theorem~\ref{r:folding cells} using
elementary tropical geometry.  This sidesteps the need to
generalize the geometric tropicalization machinery of~\cite{STY} 
to nontrivially valued fields.

\begin{proof}[Proof of Theorem~\ref{r:folding cells}]
Fix $\bbK$ to be a field of generalized power series in $t$ 
over a residue field~$\bbk$ with value group $\R$, 
so that $\bbk\hookrightarrow\bbK$. 
Let $\cl A\in\bbK^{d\times n}$ be a generic classical matrix with 
$\val(\cl A)=A$, so that $L(A)$ is the tropicalization of $\cl L(\cl A)$. 
The fundamental theorem of tropical geometry tells us that 
$L(A)=\val(\cl L(\cl A))$.

Given $y\in L(A) \cap \R^n$, choose a classical $\cl y\in\cl L(\cl A)$
with $\val(\cl y)=y$ so that $\cl y=\cl x \cdot \cl A$
for some $\cl x\in(\bbK^*)^d$. Let $x = \val(\cl x)$, and define 
$y' = x \odot A \in L(A)$.
We have $y_j\geq y'_j$ for each~$j \in [n]$.
Let $J$ be the set of indices $j$ such that the inequality $y_j>y'_j$ is strict. 
For each $j\in J$, there is a cancellation among the leading terms of the sum
$\cl y_j = \sum_{i} \cl x_i\cl a_{ij}$,
that is, the leading coefficients satisfy 
\[
\sum_{i\in I_j(\tc(x))}\lc(\cl x_i)\lc(\cl a_{ij})=0.
\]
Viewed as a set of linear equations in the unknowns $\lc(\cl x_i)\in\bbk$,
these have a solution in the torus $(\bbk^\ast)^d$, since leading
coefficients of nonzero power series must be nonzero.
Let $\cl B = (\cl b_{ij}) \in \bbk^{d \times J}$ be the matrix defined by
\begin{equation}\label{eq:submatrix}
\cl b_{ij} = 
\begin{cases}
\lc(\cl a_{ij}) & \text{if } i\in I_j(\tc(x)), \\
0 & \text{otherwise.}
\end{cases}
\end{equation}
The support of $\cl B$ is the subgraph of $\tc(x)$
consisting of all edges between the vertices $[d] \amalg J$,
and $\lc(\cl x)\cdot \cl B$ is the zero vector.
Since no component of $\lc(\cl x)$ may be zero, 
Lemma~\ref{lem:kernel meets torus} shows that $J$ satisfies condition~\eqref{cd:J}.
Moreover, following the proof of Proposition \ref{r:odot A}, we can find a point
$x' \in |\mc B(A)|$ such that $x' \odot A = x \odot A$ and $\tc(x') \supseteq \tc(x)$. 
It follows that $y \in (x' \odot A) + \R_{> 0}\{e_j:j\in J\}$ 
where $\tc(x')$ and $J$ satisfy the desired conditions, so
$y$ is a point in the right hand side of \eqref{eq:L'(A)}.

Conversely, assume $y \in (x \odot A) + \R_{> 0}\{e_j:j\in J\}$,
where $x$ is in the relative interior of some face $F$ in $\mc B(A)$, and 
$F$ and $J$ satisfy condition \eqref{cd:J}.
We wish to construct $\cl x\in\bbK^d$ with valuation~$x$
such that $\val(\cl x\cdot \cl A)=y$.  
Write $\cl x_i = (\lc(\cl x_i) + \cl x'_i)t^{x_i}$.
Again, consider the matrix $\cl B = (\cl b_{ij}) \in \bbk^{d \times J}$ defined by
\eqref{eq:submatrix}.
In view of Lemma~\ref{lem:kernel meets torus},
it is possible to choose the leading coefficients 
$\lc(\cl x_i) \in \bbk^*$ of~$\cl x$ such that
$\lc(\cl x)\cdot \cl B$ is zero.
By our genericity assumption on $\cl A$, we may suppose that 
$\sum_{i\in I_j(\tc(x))}\lc(\cl x_i)\lc(\cl a_{ij})$ is nonzero
for any $j \in [n] \setminus J$, ensuring that $\val((\cl x \cdot \cl A)_j)=y_j$
for any $j \in [n] \setminus J$.

Now, for each $j \in J$ choose a generic $\cl y_j\in\bbK$ with $\val(\cl y_j)=y_j$.  
Solving for the higher-order parts $\cl x'_i$ of the $\cl x_i$
which will arrange that $\sum_i \cl x_i \cl a_{ij}= \cl y_j$ for all $j \in J$
is a question of solving a linear system over $\bbK$
of $|J| < d$ equations in~$d$ unknowns. After dividing each of these equations
by the corresponding term $t^{(x \odot A)_j}$, we get a system whose coefficients 
have nonnegative valuation and whose constant terms have strictly positive valuation.
The coefficients that have zero valuation are precisely those corresponding to
the indices $(i,j) \in \tc(x)$.
Hall's marriage theorem implies that there is a partial matching from $I_J(\tc(x))$ to $J$ 
contained in $\tc(x)$. We can perform Gaussian elimination to get pivots in the entries corresponding to
this matching, and our genericity assumption ensures that in this process we only have to invert 
elements of~$\bbK$ of zero valuation. 
This shows that the system has a solution $(\cl x'_i)$ where all coordinates have positive valuation,
as desired.
\end{proof}

It is tempting to imagine that the terms in the 
union in Equation~\eqref{eq:L'(A)} would be the closed cells
of a cell complex, given that Proposition~\ref{r:odot A}
shows that the $F\odot A$ form a cell complex,
and the orthants added over each individual $F\odot A$ 
together with their faces clearly do.
Example~\ref{ex:Felipe March 5}
is a cautionary one, showing that there can be non-facial
intersections between these putative cells
(though it is interesting 
that the covectors in the example do not contain any matchings).

\begin{example}\label{ex:Felipe March 5}
Take $(d,n) = (11,12)$, and let $\Sigma = \{(i,i),(i,i+1) : i=1,\ldots,11\}$
be a path graph.  Let $A\in\Rinf^{d\times n}$ be the tropical matrix of support $\Sigma$ 
with all finite entries equal to zero (there is only one orbit of
matrices of support $\Sigma$ under the two tropical tori, anyhow).

Let $C_1$ be the term in the union in~\eqref{eq:L'(A)}
arising when $F$ is the face of $\mc B(A)$ with covector 
$\Sigma\setminus\{(4,5),(6,6),(8,9),(10,10)\}$
and $J$ equals $\{3,4,11\}$, and let $C_2$ be the term arising when
$F$ has covector $\Sigma\setminus\{(2,2),(4,5),(6,6),(10,10)\}$
and $J$ is $\{7,8,11\}$.  Inequality descriptions of these two polyhedra are
\begin{align*}
C_1 &= \{y_3,y_4\ge y_1=y_2\ge y_5=y_6;\ y_7=y_8\ge y_6,y_9;\ y_{11}\ge y_{12}\ge y_9=y_{10}\}, \\
C_2 &= \{y_1=y_2;\ y_3=y_4\ge y_2,y_5;\ y_7,y_8,y_{12}\ge y_9=y_{10}\ge y_5=y_6;\ y_{11}\ge y_{12}\}.
\end{align*}
Each of the two has codimension~4 in $\R^{12}$.
However, the intersection of $C_1$ and $C_2$ is a polyhedron of codimension~5
contained in the relative interiors of both:
\[
C_1\cap C_2 = \{y_3=y_4\ge y_1=y_2\ge y_5=y_6;\ y_7=y_8\ge y_9=y_{10}\ge y_6;\ y_{11}\ge y_{12}\ge y_9\}. \qedhere
\]
\end{example}

We now use these results to investigate the part of the cell complex $\mc B(A)$ that gets
mapped by $\odot A$ to the bounded part of $L(A)$.
We first give a lemma that presents a local condition describing the faces of a tropical linear space (not necessarily Stiefel) that are bounded in $\TP^{n-1}$.  

\begin{lemma}\label{lem:bounded}
Let $L$ be a tropical linear space whose underlying matroid is the uniform matroid $U_{d,n}$, and let $x \in L$.
Then $x$ is in the bounded part of $L$ if and only if for all $j \in [n]$ and all $\varepsilon >0$ small enough
we have $x - \varepsilon \cdot e_j \notin L$.
\end{lemma}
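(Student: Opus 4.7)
The plan is to reduce the lemma to a matroidal characterization of the matroid $M := M_x$ that $x$ selects in the regular matroid subdivision $\mc D$ dual to $L$; recall from \cite{TLS, LTLS} that, since $x \in L$, this $M$ is loopless. Specifically, I will prove two claims: \textbf{(i)} for small $\varepsilon > 0$, $x - \varepsilon e_j \in L$ iff $j$ is a coloop of $M$; and \textbf{(ii)} $x$ is in the bounded part of $L$ iff $M$ has no coloops. Combined, these give the lemma.

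Claim (i) will follow from a direct computation. Expanding
\[
p_B - \sum_{k \in B}(x_k - \varepsilon [k=j]) = \Big(p_B - \sum_{k \in B} x_k\Big) + \varepsilon [j \in B],
\]
for $\varepsilon > 0$ small the bases of $M_{x - \varepsilon e_j}$ are the bases of $M$ avoiding $j$, or (if every basis of $M$ contains $j$, i.e., $j$ is a coloop) all of $M$. In the first situation $j$ becomes a loop of $M_{x - \varepsilon e_j}$ and $x - \varepsilon e_j \notin L$; in the second, $M_{x - \varepsilon e_j} = M$ remains loopless and $x - \varepsilon e_j \in L$. For the easy direction of claim (ii), if $j$ is a coloop of $M$, an analogous expansion yields $M_{x + t e_j} = M$ for every $t \geq 0$, so the ray $\{x + t e_j : t \geq 0\}$ sits in the cell $C_M$ of $L$ containing $x$; since $e_j \notin \bb R \cdot (1,\ldots,1)$ when $n \geq 2$, this cell is unbounded in $\TP^{n-1}$.

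For the converse of (ii), which I prove by contrapositive, suppose the cell $C_M$ has a recession direction $v \notin \bb R \cdot (1,\ldots,1)$. Since the perturbation $y \mapsto x + tv$ must preserve the bases of $M$ as minimizers of $p_B - \sum_{k \in B} y_k$ for every $t \geq 0$, inspecting the $t$-linear correction $-t\sum_{k \in B} v_k$ forces $\sum_{k \in B} v_k$ to be constant over bases of $M$ and to equal the maximum of $\sum_{k \in B} v_k$ over all $B \in \binom{[n]}{d}$. Let $v_{(d)}$ denote the $d$-th largest coordinate of $v$ and set $T = \{j : v_j > v_{(d)}\}$. Looplessness of $M$ means every $j \in [n]$ lies in some basis of $M$ and hence in a max-sum $d$-subset, forcing $v_j \geq v_{(d)}$ for every $j$; together with the definition of $v_{(d)}$ this gives $v_{(d)} = v_{(d+1)} = \cdots = v_{(n)}$, so the assumption $v \notin \bb R \cdot (1,\ldots,1)$ is equivalent to $T \neq \emptyset$. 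Since every max-sum $d$-subset, and thus every basis of $M$, contains $T$, every element of $T$ is a coloop of $M$, producing the desired coloop.

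\textbf{Main obstacle.} The crux of the argument is the converse of (ii): identifying the recession cone of $C_M$ (essentially a normal cone of $\Gamma_M$ in the hypersimplex $\Delta_{d,n}$) and extracting a coloop from any nontrivial recession direction via the combinatorics of max-sum $d$-subsets. Once this matroidal dictionary is in place, claim (i) and the forward direction of (ii) reduce to routine perturbative computations with the expression $p_B - \sum_{k \in B} y_k$.
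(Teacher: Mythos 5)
Your proof is correct, and its first half is the same computation as the paper's: expanding $p_B - \sum_{k\in B}(x_k-\varepsilon[k=j])$ and observing that for small $\varepsilon$ the matroid selected by $x-\varepsilon e_j$ consists of the bases of $M_x$ meeting $\{j\}$ as little as possible, so that $x-\varepsilon e_j\in L$ exactly when $j$ is a coloop of $M_x$. Where you diverge is in establishing the dictionary ``$x$ lies in the bounded part of $L$ $\Longleftrightarrow$ $M_x$ is coloop-free.'' The paper gets this in one line by citing the standard duality: bounded faces of $L$ correspond to interior cells of the matroid subdivision $\mc D$ of the hypersimplex $\Delta_{d,n}$, and interior cells are exactly those not lying in a hyperplane $x_i=0$ or $x_i=1$, i.e.\ the loopless, coloop-free matroids. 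You instead prove this equivalence from scratch at the level of the cell $C_{M_x}$ of $L$: a coloop $j$ yields the explicit recession ray $e_j$, and conversely any recession direction $v\notin\bb R\cdot(1,\dots,1)$ forces every basis of $M_x$ to be a max-sum $d$-subset for $v$, whence (using looplessness, and using the hypothesis $U_{d,n}$ so that all $\binom{[n]}{d}$ compete in the minimization) the nonempty set $T=\{j: v_j>v_{(d)}\}$ consists of coloops. This recession-cone argument is sound and buys a self-contained proof that does not lean on the interior-cell duality, at the cost of being longer; the paper's route is shorter but rests on that cited structural fact. Two small points you should make explicit if you write this up: that $x$ lies in the relative interior of $C_{M_x}$, so every face of $L$ containing $x$ contains $C_{M_x}$ and unboundedness of $C_{M_x}$ really does exclude $x$ from the bounded part; and that for $y$ in the closure $C_{M_x}$ the bases of $M_x$ are only \emph{weak} minimizers of $p_B-\sum_{k\in B}y_k$, which is all your limiting argument as $t\to\infty$ needs.
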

\begin{proof}
The bounded part of $L$ corresponds to the faces of $L$ that are dual to interior cells in the associated matroid subdivision $\mc D$.
The subdivision $\mc D$ is a subdivision of the hypersimplex $\Delta_{d,n}$, so interior faces are precisely the
faces not contained in any of the hyperplanes $x_i = 0$ or $x_i = 1$, that is, faces corresponding to loop-free and coloop-free matroids.

Now, let $M$ be the matroid in $\mc D$ selected by the vector $x$, i.e. the matroid
whose bases are the subsets $B \in \binom{[n]}{d}$ for which $p_B - \sum_{i \in B} x_i$ is minimal,
where $p$ is the tropical Pl\"ucker vector corresponding to $L$. The matroid $M_j$ corresponding to the
vector $x - \varepsilon \cdot e_j$ consists of all the bases of $M$ whose intersection with $\{j\}$
is as small as possible, so $j$ is a coloop in $M$ if and only if $j$ is not a loop in $M_j$.
Since $M$ is loop free, the only possible loop $M_j$ could have is $j$. It follows that $M$ is
coloop free if and only if all the matroids $M_j$ have loops.
Because a matroid in $\mc D$ is dual to a cell of~$L$ if and only if 
it has no loops, this completes the proof.
\end{proof}

We now describe the bounded part of $L(A)$ in terms of the complex $\mc H(A)$.
Denote by $\mc K(A)$ the subcomplex of $\mc H(A)$ consisting of all faces $F$ whose tropical covector satisfies 
\begin{equation}\label{eq:coloopless}
\text{ for every nonempty } I \subseteq [d], \quad |J_I(\tc(F))| \geq |I| + 1.
\end{equation}
Note that condition \eqref{eq:coloopless} is simply the dragon marriage condition on the graph $\tc(F)$.

\begin{theorem}\label{r:bounded}
Assume the support $\Sigma$ of $A$ contains a support set.
Then the map $\odot A$ restricts to a piecewise linear homeomorphism between $\mc K(A)$ 
and the subcomplex of $L(A)$ consisting of all its bounded faces, 
which is an isomorphism of polyhedral complexes.
\end{theorem}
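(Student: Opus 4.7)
The plan is to reduce everything to one key identification: for any $x\in\R^d$ with covector $\tau=\tc(x)$, the matroid $M_y$ selected by $y=x\odot A$ in the subdivision $\mc D(A)$ equals the transversal matroid $M(\tau)$. I would establish this by a direct computation. Since $y_j=\min_i(x_i+a_{ij})$, we have $a_{ij}-y_j\geq -x_i$ for every $(i,j)\in\Sigma$, with equality precisely when $(i,j)\in\tau$. For any $d$-subset $B$ and matching $\lambda$ on $B$, summing the inequality over $\lambda$ gives $A_\lambda-\sum_{j\in B}y_j\geq -\sum_i x_i$, with equality iff $\lambda\subseteq\tau$; minimizing over matchings on $B$ shows that $p_B-\sum_{j\in B}y_j$ attains its minimum $-\sum_i x_i$ precisely when $B$ is a basis of $M(\tau)$.

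First I would check that $\mc K(A)\subseteq\mc B(A)$, since condition~\eqref{eq:coloopless} applied to singletons forces $|J_{\{i\}}(\tc(F))|\geq 2$ for every $i$, so each row of $\tc(F)$ is non-empty. By Proposition~\ref{r:odot A} the restriction $(\odot A)|_{\mc K(A)}$ is therefore continuous, piecewise linear, and injective. Now for $F\in\mc K(A)$ and $x$ in its interior, a Hall-type argument using the inequalities $|J_I(\tau)|\geq|I|+1$ shows that $M(\tau)$ is coloopless, while the hypothesis that $\Sigma$ contains a support set (so $M(\Sigma)=U_{d,n}$) together with the fact that every column of $\tc(x)$ is non-empty makes $M(\tau)$ loopless. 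The key identification then places $y$ in a bounded face of $L(A)$ via Lemma~\ref{lem:bounded}. Conversely, for $y$ in the bounded part I would decompose $y=x'\odot A+t$ via Theorem~\ref{r:folding cells} with $x'$ in the interior of some $F'\in\mc B(A)$ and $t\in\R_{\geq 0}\{e_j:j\in J'\}$ for $J'\in\mc J_{F'}$. If $t$ had a positive component at $j_0\in J'$, then $y-\varepsilon e_{j_0}=x'\odot A+(t-\varepsilon e_{j_0})$ would still lie in $L(A)$ for small $\varepsilon>0$, contradicting Lemma~\ref{lem:bounded}; hence $t=0$ and $y=x'\odot A$, and the key identification combined with coloop-freeness of $M_y$ forces $\tc(F')$ to satisfy the dragon marriage condition, so $F'\in\mc K(A)$.

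For the polyhedral complex isomorphism, I would observe that the key identification matches each open cell $F\in\mc K(A)$ with covector $\tau$ to the open face of $L(A)$ dual to $M(\tau)$: the affine linear injection $F\to F\odot A$ sends $F$ into this face, and a continuity argument using the piecewise linear inverse $(\odot A)|_{\mc B(A)}^{-1}\colon y\mapsto(\max_j(y_j-a_{ij}))_i$ shows that $F\odot A$ fills out the entire face. The resulting cell-to-cell bijection preserves the face poset, yielding the isomorphism of polyhedral complexes. The main obstacle is the opening matroid identification — cleanly linking the combinatorics of $\tc(x)$ to the matroid $M_y$ read off from $\mc D(A)$ — after which the set-theoretic bijection, the subtle use of Lemma~\ref{lem:bounded} in the converse (the boundedness hypothesis is what rules out the orthant summand of Theorem~\ref{r:folding cells}), and the cell-matching for the polyhedral structure follow by routine arguments.
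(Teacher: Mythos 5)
Your overall skeleton matches the paper's: the forward direction via the transversal-matroid computation plus loop/coloop-freeness of $M(\tc(F))$ under \eqref{eq:coloopless}, and the reduction of the converse to points $y=x'\odot A$ with $x'\in\mc B(A)$ via Theorem~\ref{r:folding cells} and Lemma~\ref{lem:bounded}, are exactly right. The genuine gap is the next step of the converse. Your ``key identification'' $M_y=M(\tc(x))$ is only valid when $\tc(x)$ contains a matching: your own computation shows $p_B-\sum_{j\in B}y_j\ge-\sum_i x_i$ with equality iff $\tc(x)$ contains a matching on $B$, so if $\tc(x)$ contains \emph{no} matching at all, no $B$ attains this bound, the minimum is attained at some strictly larger value, and $M_y$ is unrelated to the (empty) transversal matroid $M(\tc(x))$. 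This is precisely the hypothesis built into Proposition~\ref{r:transversal}. In the forward direction this is harmless, since \eqref{eq:coloopless} implies Hall's condition; and granted a matching, coloop-freeness of $M(\tc(x))$ does force \eqref{eq:coloopless} (if $|J_I|=|I|$ then all of $J_I$ consists of coloops). But in the converse you apply the identification to an arbitrary $F'\in\mc B(A)$, and when \eqref{eq:coloopless} fails one may well have $|J_I(\tc(x'))|<|I|$, so that $\tc(x')$ contains no matching and ``coloop-freeness of $M_y$ forces the dragon condition'' has no content. What is needed here is the paper's argument: a coloop $j_0$ of the transversal matroid of the induced subgraph of $\tc(x')$ on $I\amalg J_I(\tc(x'))$ is shown to remain a coloop of the matroid $M_y$ actually selected by $y$, via an alternating-path comparison between a minimal-weight matching in $\Sigma$ (not in $\tc(x')$!) and a maximal partial matching in that induced subgraph. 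This is the technical heart of surjectivity and is not supplied by your sketch.

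There is a second gap in the final assertion of an isomorphism of polyhedral complexes. Knowing that $(\odot A)|_{\mc K(A)}$ is a homeomorphism onto the bounded part and that each open cell of $\mc K(A)$ lands inside the open face dual to its transversal matroid does not yet give a cell-to-cell correspondence: a priori two distinct cells of $\mc K(A)$ (for instance a cell and one of its facets, or two adjacent facets) could carry the same transversal matroid, in which case a single bounded face of $L(A)$ would be subdivided by their images --- a homeomorphism is compatible with such a refinement, so your ``continuity argument shows $F\odot A$ fills out the entire face'' does not follow. The paper closes this by proving that distinct covectors labelling cells of $\mc K(A)$ have distinct transversal matroids, using the surrounding property of nested covectors and \cite[Theorem 3]{BondyWelsh} on when an added edge preserves the transversal matroid of a presentation; some argument of this kind is required and is missing from your proposal.
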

\begin{proof}
The subcomplex $\mc K(A)$ is contained in $\mc B(A)$, so Proposition \ref{r:odot A}
ensures that $\odot A$ is injective on $|\mc K(A)|$. Condition \eqref{eq:coloopless} defining $\mc K(A)$
implies that if $x \in |\mc K(A)|$ then its tropical covector $\tc(x)$ contains a matching,
so Proposition \ref{r:transversal} tells us that $x \odot A$ is in a face $F$ of $L(A)$ dual to the
matroid polytope of the transversal matroid of the graph $\tc(x)$. 

We next argue that \eqref{eq:coloopless}
implies that this matroid, call it $M(x)$, has neither loops nor coloops, so that $F$ is indeed an
interior cell of the subdivision $\mc D(A)$ and thus $x \odot A$ is in the bounded part of $L(A)$.
Given any element $j\in[n]$, the collection of sets 
\[\{J_i(\tc(F))\setminus\{j\} : i\in [d]\}\]
supports a matching by \eqref{eq:coloopless} and Hall's theorem, which provides a basis of~$M(x)$
not containing $j$, so that $j$ is not a coloop.  Similarly, $I_j(\tc(F))$ is nonempty, 
and if $i_0$ is any of its elements, then the collection
\[\{J_i(\tc(F))\setminus\{j\} : i\in [d]\setminus\{i_0\}\}\]
supports a matching which extends to a matching in $\tc(F)$ by insertion of $(i_0,j)$,
implying that $j$ is not a loop of~$M(x)$.

We will now prove that $(\odot A)|_{\mc K(A)}$ surjects onto the bounded part of $L(A)$.
Theorem \ref{r:folding cells} and Lemma \ref{lem:bounded} imply that the bounded part of $L(A)$
is contained in the image of $\odot A$. We will show that if $x \in |\mc B(A)|$ but $x \notin |\mc K(A)|$
then $x \odot A$ is not in the bounded part of $L(A)$. 
Let $x \in |\mc B(A)| \setminus |\mc K(A)|$, so there exists a nonempty 
$I \subseteq [d]$ such that
$|J_I(\tc(x))| \leq |I|$. Let $G$ be the induced subgraph of $\tc(x)$ 
on the vertices $I \amalg J_I(\tc(x))$.
Consider the transversal matroid $N$ on the set $J_I(\tc(x))$
associated to the graph $G$, whose independent sets correspond to subsets of $J_I(\tc(x))$ that can
be matched in $G$ with a subset of $I$. 
The matroid $N$ must have coloops: either $G$ has a complete matching
in which case all elements are coloops, or by Hall's theorem it has an inclusion-minimal set $I'$
such that $|I'|>|J_{I'}(\tc(x))|$.  In the latter case, by minimality, 
$|I'| = 1+|J_{I'}(\tc(x))|$ and there exists a matching $\lambda$ 
to $J_{I'}(\tc(x))$ from a subset of $I'$ of the same size, which is nonempty because $x\in|\mc B(A)|$.  
Then given any partial matching on $N$, removing any edges with left vertices in $I'$ 
and then inserting edges of $\lambda$ as necessary gives a new matching whose set of right
vertices contains $J_{I'}(\tc(x))$: i.e.\ $J_{I'}(\tc(x))$ consists of coloops.

Denote now by $M$ be the matroid in $\mc D(A)$ selected by the vector $y = x \odot A$.
Let $j_0 \in J_I(\tc(x))$ be a coloop of $N$.
We claim that $j_0$ is also a coloop of $M$, and thus $M$ is not an interior cell of $\mc D(A)$.
Assume by contradiction that $j_0$ is not a coloop of $M$, 
and let $B$ be a basis of $M$ not containing $j_0$.
Just as in the proof of Theorem \ref{r:restrictsubdiv}, it follows that there exists a matching
$\lambda$ in $\Sigma$ from $[d]$ to $B$ of minimal weight
$\sum_{(i,j) \in \lambda} a_{ij} +x_i - y_j$ among all matchings in $\Sigma$.
Consider a maximal partial matching $\lambda'$ in the graph $G$
(of size possibly smaller than $|I|$ and $|J_I(\tc(x))|$).
The symmetric difference of the matchings $\lambda$ and 
$\lambda'$ is a union of cycles and paths whose edges
alternate between $\lambda$ and $\lambda'$. 
Since $j_0$ is an endpoint of an edge in $\lambda'$ but not an
endpoint of any edge in $\lambda$, one of these alternating paths 
$\ell$ starts with an edge $(j_0, i_1) \in \lambda'$
and ends with an edge of $\lambda$. 
The edges of $\lambda'$ are all in $\tc(x)$, and all the edges of $\ell$ that are part of $\lambda$ must
also be in $\tc(x)$, otherwise the matching obtained by taking $\lambda$ and swapping all the edges
in $\lambda \cap \ell$ for the edges in 
$\lambda' \cap \ell$ would contradict the minimality of the weight of
$\lambda$. It follows that all the edges of $\ell$ 
are edges of the graph $G$. But then, taking the matching
$\lambda'$ and swapping all the edges in $\lambda' \cap \ell$ 
for the edges in $\lambda \cap \ell$ gives rise
to a maximal matching of $G$ that does not include the vertex $j_0$, 
contradicting that $j_0$ was a coloop of $N$.

Finally, in order to prove that $\odot A$ is an isomorphism of polyhedral complexes,
it is enough to show that any two different tropical covectors labelling faces of $\mc K(A)$
have different associated transversal matroids. Assume this is not the case. Since $\odot A$
is a homeomorphism, we can find two distinct tropical covectors 
$\sigma, \tau \in \TC(A)$ satisfying condition 
\eqref{eq:coloopless} and having the same transversal matroid, such that the face of $\mc K(A)$
labelled by $\tau$ is a face of the face labelled by $\sigma$, i.e., $\sigma \subsetneq \tau$.
It follows that there exist partitions $[d] = I_1 \amalg \dotsb \amalg I_r$, 
$[n] = J_1 \amalg \dotsb \amalg J_r$ so that $\tau$ is a subset 
of $\bigcup_{k \geq \ell} I_k \times J_\ell$ and $\sigma$ is the 
intersection of $\tau$ with $\bigcup_k I_k \times J_k$ (this is called the 
{\em surrounding property} in \cite{TropOMs}). Let $(i,j)$ be an edge in $\tau \setminus \sigma$.
We thus have $i \in I_s$ and $j \in J_t$ for some $s>t$. By \cite[Theorem 3]{BondyWelsh}, since 
$\sigma$ and $\tau$ define the same transversal matroid $M = M(\sigma)$, 
the element $j$ must be a coloop of the deletion of $J_i(\sigma)$ from $M$.
This deletion $M'$ is the transversal matroid of the 
induced subgraph $\sigma'$ of $\sigma$ on the vertices
$([d] \setminus \{i\}) \amalg ([n] \setminus J_i(\sigma))$.
Our assumptions imply that the induced subgraph on the vertices
$I_t \amalg J_t$ is the same for both graphs $\sigma$ and $\sigma'$,
so it follows that $j$ is also a coloop of $M$ and thus $\sigma$
does not label a face in $\mc K(A)$.
\end{proof}

\bibliographystyle{amsalpha}
\bibliography{bibliography}
\label{sec:biblio}

\end{document}